\documentclass[sort&compress,preprint,11pt]{elsarticle}

\usepackage{amssymb}
\usepackage{amsmath,amsthm}
\usepackage{lineno,hyperref}
\usepackage{amsfonts}
\usepackage{amsmath}
\usepackage{rotating}
\usepackage{amssymb}
\usepackage{amscd,multirow}
\usepackage{graphicx}
\usepackage{epstopdf}
\usepackage[titletoc]{appendix}
\usepackage[justification=centering]{caption}
\usepackage{subeqnarray,cases}
\usepackage{braket,amsfonts}
\usepackage{cleveref}
\usepackage{mathtools}
\usepackage{array}
\usepackage{algpseudocode}%
\usepackage{listings}%
\usepackage[misc]{ifsym}
\usepackage[justification=centering]{caption}
\usepackage{multirow}%

\usepackage{caption}
\usepackage{subcaption}
\usepackage{subcaption,natbib}
\usepackage{pgfplots}
 \usepackage{float}
\usepackage{makecell}
\usepackage{algorithm}

\newtheorem{theorem}{Theorem}[section]

\newtheorem{lemma}{Lemma}[section]
\newtheorem{proposition}{Proposition}[section]
\newtheorem{remark}{Remark}[section]

\newtheorem{assumption}{Assumption}[section]
\allowdisplaybreaks[4]

\modulolinenumbers[5]

\usepackage[a4paper,margin=1in]{geometry}
\usepackage{amsmath,amssymb,amsthm,mathtools}
\usepackage{bm}
\usepackage{enumitem}

\newcommand{\R}{\mathbb{R}}
\newcommand{\ip}[2]{\left\langle #1,#2\right\rangle}

\newcommand{\bigO}{\mathcal{O}}

      \renewcommand{\thesection}{\arabic{section}}

\begin{document}

\begin{frontmatter}



\title{Convergence of iterates and improved rates for accelerated augmented Lagrangian methods for linearly constrained convex optimization
}


\author{Xin He} 

\affiliation{organization={School of Science, Xihua University},
            city={Chengdu},
            postcode={610039}, 
            state={Sichuan},
            country={China}}

\ead{hexinuser@163.com}

\author{Nan-Jing Huang}
\affiliation{organization={Department of Mathematics, Sichuan University},
            city={Chengdu},
            postcode={610064}, 
            state={Sichuan},
            country={China}}

\ead{njhuang@scu.edu.cn}

\author{Yi-Bin Xiao}
\affiliation{organization={Department of Mathematics, University of Electronic Science and Technology of China},
            city={Chengdu},
            postcode={611731}, 
            state={Sichuan},
            country={China}}
\ead{xiaoyb9999@hotmail.com}

\author{Ya-Ping Fang}
\affiliation{organization={Department of Mathematics, Sichuan University},
            city={Chengdu},
            postcode={610064}, 
            state={Sichuan},
            country={China}}
\ead{ypfang@scu.edu.cn}

\begin{abstract}
Motivated by an inertial primal-dual dynamical system with vanishing damping, we propose a class of accelerated augmented
Lagrangian methods with Nesterov extrapolation parameters for a linearly constrained convex optimization problem with a
differentiable objective function. The framework
contains two variants: an implicit-gradient scheme for convex continuously
differentiable objectives and a partially explicit scheme for convex smooth
objectives. Under suitable parameter conditions, we prove convergence of the
 primal-dual sequence to a primal-dual solution, together with accelerated estimates for the augmented Lagrangian gap, the feasibility violation, and the objective
residual. In the noncritical parameter regime, these estimates are improved from
$\bigO(1/k^2)$ to $o(1/k^2)$. Numerical experiments are also presented to illustrate the theoretical results. To the best of our knowledge, neither $o(1/k^2)$ rates for both feasibility violation and objective residual nor  convergence of iterates under the critical parameter condition have  been previously established for accelerated augmented Lagrangian-type methods in this setting.
\end{abstract}


\begin{keyword}
 Accelerated augmented Lagrangian method; linearly constrained convex optimization; Nesterov acceleration; iterate convergence; $o(1/k^2)$ rate

\vspace{0.5em}
\MSC[]{90C25, 90C30,  65K05, 68W40, 65B99}


\end{keyword}

\end{frontmatter}

 \section{Introduction}
 
 \subsection{Problem formulation}

We consider the linearly constrained convex optimization problem
\begin{equation}\label{prob_main}
    \min_{x\in\mathbb R^n} f(x),
    \qquad \text{\rm s.t. } Ax=b,
\end{equation}
where $f:\mathbb R^n\to\mathbb R$ is convex, $A\in\mathbb R^{m\times n}$,
and $b\in\mathbb R^m$. Problem \eqref{prob_main} is a basic model in
constrained convex optimization and arises in a wide range of applications,
including signal and image reconstruction, statistical learning, distributed
optimization, resource allocation, and network flow problems; see
\cite{BoydADMM,LinLF,ZhaoJMLR,ZengSiam,BaiJSC,LuoMc}.

For a penalty parameter $\beta\ge0$, the augmented Lagrangian associated with
\eqref{prob_main} is defined by
\[
    \mathcal L_\beta(x,\lambda)
    =
    f(x)+\langle \lambda,Ax-b\rangle
    +\frac{\beta}{2}\|Ax-b\|^2,
    \qquad
    (x,\lambda)\in\mathbb R^n\times\mathbb R^m .
\]
The quadratic penalty term improves the control of feasibility, while the
multiplier variable preserves the primal-dual structure of the original
constrained problem. This paper studies accelerated augmented Lagrangian method (ALM)
for \eqref{prob_main}. The acceleration mechanism is related to Nesterov
extrapolation rules \cite{Nesterov1983,Nesterov2004}, but the presence
of the linear constraint requires a coupled primal-dual treatment. We distinguish
two objective function settings.

The first setting assumes that $f$ is convex and continuously differentiable,
without imposing Lipschitz continuity of $\nabla f$. This class is relevant
because convex differentiable objectives are not necessarily globally smooth. For
example, the $\ell_p$-loss data-fitting objective, also known as the
least-$p$-powers regression objective,
\[
    f(x)=\frac1p\|Bx-c\|_p^p,\qquad 1<p<2,
\]
is convex and continuously differentiable, but its gradient is not globally
Lipschitz near points satisfying $Bx-c=0$. Such nonquadratic fidelity terms
arise in robust regression, inverse problems, and random sampling; see \cite{Woodruff,Ghadiri2021,AdilP,ShenSiam}. Power convex
functions with superquadratic growth also provide natural examples whose
gradients are not globally Lipschitz on $\mathbb R^n$. Hence it is meaningful to
develop accelerated augmented Lagrangian schemes beyond the standard
$L$-smooth framework. The second setting assumes that $f$ has an $L$-Lipschitz continuous gradient.
This is the classical smooth convex setting in first-order optimization
\cite{LinLF,BeckSiam,Nesterov1983,Nesterov2004}. Under this additional
smoothness assumption, the objective term can be linearized at an extrapolated
point, which leads to a partially explicit algorithm and allows us to derive
additional estimates for gradient and stationarity residuals. Thus the two cases
considered in this paper cover both the general convex $C^1$ setting and the
smooth convex setting.

Throughout the paper, we assume that the KKT set $\Omega$ is nonempty, where
\[
    \Omega
    :=
    \left\{
    (x^*,\lambda^*)\in\mathbb R^n\times\mathbb R^m:
    Ax^*=b,\quad
    \nabla f(x^*)+A^\top\lambda^*=0
    \right\}.
\]
Equivalently, $(x^*,\lambda^*)\in\Omega$ is a saddle point of the augmented
Lagrangian, namely
\begin{equation}\label{eq:saddle}
    \mathcal L_\beta(x^*,\lambda)
    \le
    \mathcal L_\beta(x^*,\lambda^*)
    \le
    \mathcal L_\beta(x,\lambda^*),
    \qquad
    \forall (x,\lambda)\in\mathbb R^n\times\mathbb R^m .
\end{equation}
Indeed, the first inequality follows from $Ax^*=b$, while the second follows
from the convexity of $f$ and the stationarity condition
$\nabla f(x^*)+A^\top\lambda^*=0$. Therefore, solving \eqref{prob_main} is equivalent to finding a
primal-dual solution in $\Omega$.

The goal of this paper is to design accelerated augmented Lagrangian schemes for
\eqref{prob_main} and to establish convergence of the   primal-dual sequence,
together with accelerated estimates for the augmented Lagrangian gap, feasibility
violation, objective residual, and, in the smooth case, stationarity residual.

\subsection{Related works}

Accelerated augmented Lagrangian methods for the linearly constrained convex
problem \eqref{prob_main} have attracted considerable attention in the last two
decades. A basic idea is to combine the feasibility control provided by the
augmented Lagrangian with Nesterov extrapolation. He and Yuan
\cite{HeOO} showed that the augmented Lagrangian method can be accelerated by
using Nesterov extrapolation for the dual sequence, and obtained an
$\bigO(1/k^2)$ convergence rate for the Lagrangian gap. This dual acceleration
strategy was further developed in several directions, including linearized
subproblems, Bregman regularization, and Uzawa schemes; see
\cite{HuangJSC,KangJSC,TaoMC}. These works established accelerated estimates
for the Lagrangian gap. Another line of research applies Nesterov extrapolation mainly to the
primal sequence. Sabach and Teboulle
\cite{SabachSIOPT} proposed a unifying Lagrangian-based framework and obtained
faster nonergodic estimates under a general algorithmic map. Xu \cite{XuSIOPT}
developed accelerated first-order primal-dual proximal methods for composite
linearly constrained problems. Other related accelerated primal-dual or
linearized augmented Lagrangian methods include
\cite{Tran20,Tang2020,HeADMM,BaiJCM,HeANM}. These methods show that the
feasibility violation and the objective residual can achieve $\bigO(1/k^2)$
rates. However, the acceleration is often imposed mainly on the primal sequence,
whereas the dual sequence is treated through a multiplier or correction update.
Thus the inertial acceleration of the primal and dual variables is  not always present in these algorithmic frameworks.

A complementary viewpoint of ALMs is provided by the continuous-time dynamical system
interpretation of acceleration. Continuous-time models of Nesterov acceleration are usually described
by inertial dynamics with a vanishing damping $\alpha/t$.
This viewpoint has led to many Lyapunov  analyses for accelerated
optimization methods; see
\cite{SuJMLR,WibisonoPNAS,LuoMP,XieMp,AttouchSIOPT,HeArxiv}. For
linearly constrained problems, inertial primal-dual dynamics have been studied
in \cite{ZengTAC,HeSICON,BotJDE,AttouchJOTA,HeAutomatica}. These works clarify
how primal-dual extrapolation, damping, and constraints 
interact at the continuous level. Motivated by numerical discretizations of
such inertial dynamics, several accelerated ALM-type methods have been proposed
and shown to achieve $\bigO(1/k^2)$ convergence rates. In particular, inertial discretization techniques lead to fast primal-dual methods for
linearly constrained problems, and some scaled variants further improve the
convergence rates by introducing additional scaling coefficients; see
\cite{LuoMc,LuoJota,HeAutomatica,HeNA}. 

Nevertheless, for the aforementioned Nesterov accelerated ALMs, the
available results mainly concern convergence rates of function values,
feasibility measures, Lagrangian gaps, or primal-dual gaps. The convergence of
the generated primal-dual iterate sequence is generally not addressed, except
for a few results in the strongly convex regime. This leaves a gap between
accelerated residual estimates and full last-iterate convergence for the
linearly constrained convex problem \eqref{prob_main}. The most closely related work  is the fast augmented
Lagrangian method of Bo{\c t}, Csetnek and Nguyen \cite{BotMP}. Their algorithm
is obtained from discretizations of inertial primal-dual dynamical systems
related to \cite{ZengTAC,HeSICON,BotJDE}, and is formulated in terms of the
augmented Lagrangian. It treats both primal and dual variables in an inertial
way and proves $\bigO(1/t_k^2)$ estimates for the feasibility violation and the
objective residual for three classical parameter choices: the Nesterov rule
\cite{Nesterov1983}, the Chambolle-Dossal rule \cite{ChambolleJota}, and the
Attouch-Cabot rule \cite{Attouch18siam}. Furthermore, they established
convergence of the generated iterates under the noncritical parameter condition
\[
      t_{k+1}^2-t_k^2\le \rho t_{k+1},
    \qquad
    \rho\in(0,1).
\]
However, the critical case $\rho=1$, which corresponds to the classical
Nesterov parameter setting, is not covered by their iterate convergence
result. Moreover, the rate estimates in \cite{BotMP} remain of $\bigO(1/t_k^2)$
rates and are obtained in the smooth setting.

The above discussion reveals a gap between accelerated rates and
iterate convergence of ALMs for linearly constrained convex problems. Although many
Nesterov accelerated ALMs have been shown to enjoy  $\bigO(1/k^2)$ estimates for objective
residuals, feasibility violations, or primal-dual gaps
\cite{HeADMM,HeNA,XuSIOPT,LuoMc,SabachSIOPT,TaoMC}, convergence of the 
primal-dual sequence is usually not proved without additional assumptions such
as strong convexity. To the best of our knowledge, the main available result in
this direction is due to Bo{\c t}, Csetnek and Nguyen \cite{BotMP}, where
iterate convergence was established for a fast augmented Lagrangian method in
the smooth convex setting and under a noncritical parameter regime. This leaves
several natural questions. Can one prove convergence of the whole primal-dual
sequence for convex $C^1$ objectives without assuming Lipschitz continuity of
the gradient? Does the convergence result remain valid in the critical case
$\rho=1$, which corresponds to the classical Nesterov parameter choice? In
addition, even in the noncritical smooth setting, can the usual $\bigO(1/k^2)$
bounds be sharpened to $o(1/k^2)$ rates for the feasibility violation and the
objective residual? Such little-o behavior is known for Nesterov accelerated
forward-backward algorithms in unconstrained  settings
\cite{AttouchSIOPT,HeCoap,Attouch18siam}, but has not been obtained for the
accelerated ALM framework considered here. These issues motivate the accelerated
ALM schemes developed in this paper, together with their iterate convergence and
improved convergence rates.

\subsection{Main contributions}

Motivated by the above discussion, this paper develops a unified accelerated augmented
Lagrangian method for \eqref{prob_main}. The main contributions are
summarized as follows.

\begin{enumerate}
\item[\rm(i)]
We propose a unified accelerated ALM in which both the primal and dual variables
are equipped with Nesterov extrapolation. The method is governed by a
general sequence $\{t_k\}$ satisfying
\[
    t_{k+1}^2-t_k^2\le \rho t_{k+1},
    \qquad
    \rho\in(0,1].
\]
Hence it covers both the critical Nesterov case $\rho=1$ and the
noncritical case $\rho<1$. In contrast to accelerated ALMs where the inertial mechanism is mainly imposed on either the dual
sequence or the primal sequence
\cite{HeOO,HuangJSC,XuSIOPT,SabachSIOPT,HeADMM,TaoMC,LuoMc}, the proposed method
uses a coupled primal-dual inertial structure. The framework contains an
implicit-gradient scheme for convex $C^1$ objectives and a partially explicit
scheme for convex $L$-smooth objectives.

\item[\rm(ii)]
We prove convergence of the   primal-dual sequence generated by the algorithm:
\[
    (x_k,\lambda_k)\to(x^*,\lambda^*)\in\Omega.
\]
Many existing accelerated ALM methods have been shown to enjoy $\bigO(1/t_k^2)$ rates for
objective residuals, feasibility violations or Lagrangian gaps under different choices of $\{t_k\}_{k\ge 1}$, but the convergence of the
generated primal-dual iterates remains open
\cite{XuSIOPT,SabachSIOPT,HeNA}. The fast ALM of Bo{\c t}, Csetnek and
Nguyen \cite{BotMP} is one of the few works proving iterate convergence, but
their result is obtained in the smooth setting and under a noncritical parameter
regime $\rho<1$. By contrast, our convergence analysis also covers the convex
$C^1$ implicit-gradient case and the critical case $\rho=1$. Moreover, under
the general parameter condition above, we recover the accelerated estimates
\[
    \|Ax_k-b\|
    =
    O\left(\frac1{t_k^2}\right),
    \qquad
    |f(x_k)-f(x^*)|
    =
    O\left(\frac1{t_k^2}\right).
\]
For the classical parameter choices where $t_k$ has the same order as $k$, these
bounds become the usual $\bigO(1/k^2)$ rates.

\item[\rm(iii)]
In the noncritical regime $0<\rho<1$, we improve the standard
$\bigO(1/t_k^2)$ estimates to little-o rates:
\[
    \|Ax_k-b\|
    =
    o\left(\frac1{t_k^2}\right),
    \qquad
    |f(x_k)-f(x^*)|
    =
    o\left(\frac1{t_k^2}\right).
\]
For classical noncritical choices of $\{t_k\}$, this
gives the sharper $o(1/k^2)$ rates, and improves the usual big-O rates obtained
in Nesterov accelerated ALMs 
\cite{HeOO,XuSIOPT,BotMP,SabachSIOPT,HeNA,LuoMc}. In the smooth case, we further
derive the stationarity residual estimate
\[
    \|\nabla f(x_k)+A^\top\lambda_k\|
    =
    o\left(\frac1{t_k}\right),
\]
which improves the $o(1/\sqrt{t_k})$ estimate in \cite{BotMP}.  Such little-o rates have not been previously established for accelerated augmented Lagrangian-type methods.   The same
Lyapunov argument also yields direct variants for composite differentiable
objectives and for a scaled primal-dual formulation.
\end{enumerate}

\subsection{Organization}

The rest of the paper is organized as follows. Section \ref{sec2}
introduces the accelerated augmented Lagrangian methods. Section
\ref{sec3} establishes the unified Lyapunov estimate. Section
\ref{sec4} proves the convergence rates, the convergence of the
 primal-dual sequence, and the improved little-o rates. Section \ref{sec5} reports numerical experiments  to illustrate the theoretical results.
Section \ref{sec6} concludes the paper, and the appendix collects auxiliary
results.

\section{Accelerated augmented Lagrangian methods}\label{sec2}

In this section, we introduce a unified accelerated augmented Lagrangian method. The construction is motivated by time discretizations of
an inertial primal-dual dynamical system associated with the augmented
Lagrangian formulation of \eqref{prob_main}. We first recall the continuous-time
model and its discretization motivation, and then present the unified coupled
scheme and its algorithm form.

We start from the augmented inertial primal-dual dynamical system considered in
\cite{ZengTAC,HeSICON,BotJDE,HeArxiv}:
\begin{equation}\label{dy_main}
\begin{cases}
\ddot{x}(t)+\dfrac{\alpha}{t}\dot{x}(t)
+\nabla f(x(t))
+A^\top\bigl(\lambda(t)+\theta t\dot{\lambda}(t)\bigr)
+\beta A^\top(Ax(t)-b)=0,
\\[0.8em]
\ddot{\lambda}(t)+\dfrac{\alpha}{t}\dot{\lambda}(t)
-\bigl(A(x(t)+\theta t\dot{x}(t))-b\bigr)=0.
\end{cases}
\end{equation}
Here the term $\beta A^\top(Ax(t)-b)$ is the augmented feasibility correction,
while
\[
    \lambda(t)+\theta t\dot\lambda(t),
    \qquad
    x(t)+\theta t\dot x(t)
\]
represent the primal-dual extrapolation mechanism.

Let $\{\tau_k\}_{k\ge0}$ be a uniform grid with stepsize $h>0$, and write
$(x_k,\lambda_k)\approx (x(\tau_k),\lambda(\tau_k))$. At time $\tau_{k}$, we
approximate the inertial terms by
\begin{equation*}
\begin{cases}
\ddot x(\tau_{k})
    +\dfrac{\alpha}{\tau_{k}}\dot x(\tau_k)
    \approx
    \dfrac{x_{k+1}-2x_k+x_{k-1}}{h^2}
    +
    \dfrac{\alpha}{\tau_{k}}
    \dfrac{x_k-x_{k-1}}{h},
\\[0.8em]
\ddot\lambda(\tau_{k})
    +\dfrac{\alpha}{\tau_{k}}\dot\lambda(\tau_k)
    \approx
    \dfrac{\lambda_{k+1}-2\lambda_k+\lambda_{k-1}}{h^2}
    +
    \dfrac{\alpha}{\tau_{k}}
    \dfrac{\lambda_k-\lambda_{k-1}}{h}.
\end{cases}
\end{equation*}
Define
\[
    (\widehat x_k,\widehat \lambda_k)
    :=
    (x_k,\lambda_k)
    +
    \left(1-\frac{\alpha h}{\tau_{k}}\right)
    \bigl[(x_k,\lambda_k)-(x_{k-1},\lambda_{k-1})\bigr].
\]
Then the preceding approximations can be written as
\[
    \ddot x(\tau_{k})
    +\frac{\alpha}{\tau_{k}}\dot x(\tau_k)
    \approx
    \frac{x_{k+1}-\widehat x_k}{h^2},
    \qquad
    \ddot\lambda(\tau_{k})
    +\frac{\alpha}{\tau_{k}}\dot\lambda(\tau_k)
    \approx
    \frac{\lambda_{k+1}-\widehat\lambda_k}{h^2}.
\]
For the primal-dual correction terms, we use backward differences:
\[
    \theta\tau_{k}\dot\lambda(\tau_{k})
    \approx
    \frac{\theta\tau_{k}}{h}(\lambda_{k+1}-\lambda_k),
    \qquad
    \theta\tau_{k}\dot x(\tau_{k})
    \approx
    \frac{\theta\tau_{k}}{h}(x_{k+1}-x_k).
\]
Set $\widehat\alpha_k:=\frac{\theta\tau_{k}}{h}$. A semi-implicit
discretization of \eqref{dy_main} then leads to the unified scheme
\begin{equation}\label{dy_disc}
\begin{cases}
x_{k+1}-\widehat x_k
=
-h^2\left[
g_{k+1}
+A^\top\bigl(\lambda_{k+1}
+\widehat\alpha_k(\lambda_{k+1}-\lambda_k)\bigr)
+\beta A^\top(Ax_{k+1}-b)
\right],
\\[1.2ex]
\lambda_{k+1}-\widehat\lambda_k
=
h^2\left[
Ax_{k+1}-b+\widehat\alpha_k A(x_{k+1}-x_k)
\right],
\end{cases}
\end{equation}
where
\[
    g_{k+1}
    =
    \begin{cases}
    \nabla f(x_{k+1}), & \text{implicit-gradient},\\[0.4ex]
    \nabla f(\widehat x_k), & \text{explicit-gradient}.
    \end{cases}
\]
The implicit choice is appropriate when $f$ is convex and continuously
differentiable. The explicit choice is available under the additional Lipschitz
continuity of $\nabla f$; it linearizes only the objective term, while the
augmented penalty term is still evaluated implicitly at $x_{k+1}$.

Different choices of the $\{\tau_k\}$ and of the finite-difference
approximation may lead to different Nesterov extrapolation coefficients in
the resulting accelerated schemes \cite{HeNA,BotMP,LuoMc}. Instead of fixing a
particular discretization rule in \eqref{dy_disc}, we directly consider a
general class of extrapolation coefficients described by a positive sequence
$\{t_k\}_{k\ge1}$ satisfies the following assumption.

\begin{assumption}\label{ass_tk}
The sequence $\{t_k\}_{k\ge1}$ is nondecreasing, $t_1=1$, $t_k>1$ for all
$k>2$, and $t_k\to+\infty$ as $k\to+\infty$. Moreover,
\[
      t_{k+1}^2-t_k^2\le \rho t_{k+1},
    \qquad
    \rho\in(0,1].
\]
\end{assumption}

\begin{remark}
Assumption \ref{ass_tk} covers several standard inertial parameter rules used in
accelerated first-order methods. In particular, it includes the Nesterov
 rule \cite{Nesterov1983}, the Chambolle-Dossal  rule \cite{ChambolleJota}, and
the Attouch-Cabot rule  \cite{Attouch18siam}. More general inertial rules and
the parameter choices appearing in fast augmented Lagrangian methods are also
covered; see  \cite{BotMP,HeNA,LuoMc,HeADMM,XuSIOPT}.
\end{remark}

\begin{remark}\label{re_tk}
When $\{t_k\}_{k\ge 1}$ satisfies Assumption \ref{ass_tk}, Lemma \ref{lem_scaledtk} with
$\xi_k=1$ gives
$
    \sum_{k=1}^{+\infty}1/t_k=+\infty.
$
If the growth condition holds with equality, namely
\[
    t_{k+1}^2-t_k^2=\rho t_{k+1},
\]
then $
    (t_{k+1}-t_k)(t_{k+1}+t_k)=\rho t_{k+1},
$
and hence
$
    t_{k+1}-t_k=\frac{\rho t_{k+1}}{t_{k+1}+t_k}.
$
Since $t_{k+1}\le t_{k+1}+t_k\le 2t_{k+1}$, we have
$
    \frac{\rho}{2}\le t_{k+1}-t_k\le \rho.
$
Summing this inequality from $1$ to $k-1$ gives
\[
    t_1+\frac{\rho}{2}(k-1)\le t_k\le t_1+\rho(k-1).
\]
Thus $\{t_k\}$ has the same order as $\{k\}$ as $k\to+\infty$.
\end{remark}

Let $\{t_k\}_{k\ge1}$ satisfy Assumption~\ref{ass_tk}. We define the
extrapolated points by
\[
    \bar x_k
    :=
    x_k+\frac{t_k-1}{t_{k+1}}(x_k-x_{k-1}),
    \qquad
    \bar\lambda_k
    :=
    \lambda_k+\frac{t_k-1}{t_{k+1}}(\lambda_k-\lambda_{k-1}),
\]
and choose
\[
    \alpha_k:=\frac{t_{k+1}-\eta}{\eta},
    \qquad \eta\in[\rho,1].
\]
Then
\[
    \eta\alpha_k=t_{k+1}-\eta,
\]
which is the key cancellation relation used in the Lyapunov analysis. Replacing
$\widehat x_k,\widehat\lambda_k,\widehat\alpha_k$ in \eqref{dy_disc} by
$\bar x_k,\bar\lambda_k,\alpha_k$, and allowing different primal and dual
stepsizes $\gamma>0$ and $\delta>0$, we obtain the unified coupled scheme
\begin{equation}\label{alg_diff}
\begin{cases}
x_{k+1}-\bar x_k
=
-\gamma\left[
g_{k+1}
+A^\top\bigl(\lambda_{k+1}
+\alpha_k(\lambda_{k+1}-\lambda_k)\bigr)
+\beta A^\top(Ax_{k+1}-b)
\right],
\\[1.2ex]
\lambda_{k+1}-\bar\lambda_k
=
\delta\left[
Ax_{k+1}-b+\alpha_k A(x_{k+1}-x_k)
\right],
\end{cases}
\end{equation}
where
\begin{equation}\label{eq_def_g0}
    g_{k+1}
    =
    \begin{cases}
    \nabla f(x_{k+1}), & \text{\rm Case I: } f \text{ is convex and } C^1,\\[0.4ex]
    \nabla f(\bar x_k), & \text{\rm Case II: } f \text{ is convex and } L\text{-smooth}.
    \end{cases}
\end{equation}
Here $C^1$ means that $f$ is continuously differentiable, while $L$-smooth
means that $\nabla f$ is $L$-Lipschitz continuous, namely,
\[
    \|\nabla f(x)-\nabla f(y)\|\le L\|x-y\|,
    \qquad \forall x,y\in\mathbb{R}^n.
\]
Thus Case I uses an implicit gradient evaluation at $x_{k+1}$, whereas Case II
uses an explicit gradient evaluation at the extrapolated point $\bar x_k$.

The coupled scheme \eqref{alg_diff} contains $x_{k+1}$ and $\lambda_{k+1}$
simultaneously. We now eliminate $\lambda_{k+1}$ to obtain a computable primal
subproblem followed by an explicit multiplier update.

Define
\[
    c_k:=1+\alpha_k=\frac{t_{k+1}}{\eta}.
\]
Given $\bar\lambda_k$, set
\[
    p_k:=c_k\bar\lambda_k-\alpha_k\lambda_k,
    \qquad
    r_k:=\alpha_kAx_k+b.
\]
Then the dual equation in \eqref{alg_diff} gives
\[
    \lambda_{k+1}
    =
    \bar\lambda_k+\delta(c_kAx_{k+1}-r_k).
\]
Moreover,
\begin{equation}\label{eq_dual_corr}
\begin{aligned}
\lambda_{k+1}+\alpha_k(\lambda_{k+1}-\lambda_k)
&=
c_k\lambda_{k+1}-\alpha_k\lambda_k
=
p_k+\delta c_k(c_kAx_{k+1}-r_k).
\end{aligned}
\end{equation}
Substituting \eqref{eq_dual_corr} into the primal equation of \eqref{alg_diff} gives the following
unified implementable algorithm (Algorithm \ref{al_main}).

\begin{algorithm}[!htbp]
\caption{Accelerated augmented Lagrangian method}
\label{al_main}
\begin{algorithmic}[1]
\Require Initial points $x_0=x_1\in\mathbb R^n$ and
$\lambda_0=\lambda_1\in\mathbb R^m$; parameters $\gamma>0$, $\delta>0$,
$\beta\ge0$; a sequence $\{t_k\}_{k\ge1}$ satisfying Assumption
\ref{ass_tk} with $\rho\in(0,1]$; a parameter $\eta\in[\rho,1]$.
\For{$k=1,2,\ldots$}
    \State Compute
    \[
        \alpha_k=\frac{t_{k+1}-\eta}{\eta},
        \qquad
        c_k=\frac{t_{k+1}}{\eta}.
    \]
    \State Compute the extrapolated points
    \[
        \bar x_k
        =
        x_k+\frac{t_k-1}{t_{k+1}}(x_k-x_{k-1}),
        \qquad
        \bar\lambda_k
        =
        \lambda_k+\frac{t_k-1}{t_{k+1}}(\lambda_k-\lambda_{k-1}).
    \]
    \State Set
    \[
        p_k=c_k\bar\lambda_k-\alpha_k\lambda_k,
        \qquad
        r_k=\alpha_kAx_k+b.
    \]
    \State Compute $x_{k+1}$ by one of the following  rules.

    \smallskip
    \noindent
    \textbf{Case I: convex $C^1$ objective.}
    \[
    \begin{aligned}
    x_{k+1}
    =
    \arg\min_{x\in\mathbb R^n}
    \Bigg\{
    & f(x)
    +\frac{\beta}{2}\|Ax-b\|^2
    +\frac{1}{2\gamma}\|x-\bar x_k\|^2
    +\langle p_k,Ax-b\rangle
    +\frac{\delta}{2}\|c_kAx-r_k\|^2
    \Bigg\}.
    \end{aligned}
    \]

    \smallskip
    \noindent
    \textbf{Case II: convex $L$-smooth objective.}
    Assume $\gamma\le 1/L$. 
    \[
    \begin{aligned}
    x_{k+1}
    =
    \arg\min_{x\in\mathbb R^n}
    \Bigg\{
    &
    \langle \nabla f(\bar x_k),x\rangle
    +\frac{\beta}{2}\|Ax-b\|^2
    +\frac{1}{2\gamma}\|x-\bar x_k\|^2
    +\langle p_k,Ax-b\rangle
    +\frac{\delta}{2}\|c_kAx-r_k\|^2
    \Bigg\}.
    \end{aligned}
    \]

    \State Update the multiplier by
    \[
        \lambda_{k+1}
        =
        \bar\lambda_k+\delta(c_kAx_{k+1}-r_k).
    \]
\EndFor
\end{algorithmic}
\end{algorithm}

Both primal subproblems are strongly convex because of the proximal term. In
Case I, the   objective $f$ is treated implicitly, so no Lipschitz
continuity of $\nabla f$ is required. In Case II, only the objective term is
linearized, while the augmented penalty term remains implicit. Thus the primal
step keeps an augmented Lagrangian structure and retains direct control of
feasibility. The derivation above gives the following equivalence between Algorithm
\ref{al_main} and the coupled scheme \eqref{alg_diff}.

\begin{proposition}\label{prop00}
The sequence generated by Algorithm \ref{al_main} is equivalent to the sequence
satisfying the coupled scheme \eqref{alg_diff}. More precisely, for the same
initial points and parameters, $\{(x_k,\lambda_k)\}_{k\ge0}$ is generated by
Algorithm \ref{al_main} if and only if it satisfies \eqref{alg_diff} with
$g_{k+1}$ defined by \eqref{eq_def_g0}.
\end{proposition}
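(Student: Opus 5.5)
The plan is to prove the equivalence one iteration at a time and then conclude by induction on $k$. Both descriptions use the same data $(x_0,\lambda_0)=(x_1,\lambda_1)$, $\gamma,\delta,\beta,\{t_k\}$, $\eta$. Moreover, $\bar x_k,\bar\lambda_k,\alpha_k,c_k,p_k,r_k$ are determined by $(x_{k-1},\lambda_{k-1},x_k,\lambda_k)$ in exactly the same way in both. So it suffices to show the following: for fixed $(x_{k-1},\lambda_{k-1},x_k,\lambda_k)$, a pair $(x_{k+1},\lambda_{k+1})$ satisfies \eqref{alg_diff} if and only if $x_{k+1}$ is the minimizer of the Case I/II subproblem and $\lambda_{k+1}$ is given by the multiplier update. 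Uniqueness of both sides then yields equality of the whole sequences.

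\textbf{Step 1 (dual equation $\Leftrightarrow$ multiplier update).} Since $c_k=1+\alpha_k$, we have $Ax_{k+1}-b+\alpha_kA(x_{k+1}-x_k)=c_kAx_{k+1}-(\alpha_kAx_k+b)=c_kAx_{k+1}-r_k$. Hence the second equation of \eqref{alg_diff} is literally $\lambda_{k+1}=\bar\lambda_k+\delta(c_kAx_{k+1}-r_k)$. Substituting this gives \eqref{eq_dual_corr}:
\[
\lambda_{k+1}+\alpha_k(\lambda_{k+1}-\lambda_k)=c_k\lambda_{k+1}-\alpha_k\lambda_k=p_k+\delta c_k(c_kAx_{k+1}-r_k).
\]

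\textbf{Step 2 (primal equation $\Leftrightarrow$ subproblem).} Under Step 1, the first equation of \eqref{alg_diff} becomes
\[
0=g_{k+1}+\beta A^\top(Ax_{k+1}-b)+\tfrac1\gamma(x_{k+1}-\bar x_k)+A^\top p_k+\delta c_kA^\top(c_kAx_{k+1}-r_k).
\]
Differentiating the Case I objective shows this is exactly its gradient at $x_{k+1}$, with $g_{k+1}=\nabla f(x_{k+1})$. For Case II, the same holds with the linear term $\langle\nabla f(\bar x_k),x\rangle$, whose gradient is $\nabla f(\bar x_k)$. Each subproblem objective is a sum of convex $C^1$ terms plus $\frac1{2\gamma}\|x-\bar x_k\|^2$, so it is strongly convex and differentiable. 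Consequently it has a unique minimizer, and a point is that minimizer if and only if the gradient vanishes there. Thus the primal equation holds if and only if $x_{k+1}$ is the argmin.

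\textbf{Step 3 (closing the induction).} The main point to handle carefully is the logical direction, since \eqref{alg_diff} is an implicit coupled system. Suppose $(x_{k+1},\lambda_{k+1})$ solves \eqref{alg_diff}. Step 1 forces the multiplier formula, and Step 2 then forces $x_{k+1}$ to be the unique argmin. Conversely, the algorithm's output satisfies the dual equation by Step 1 and the first-order condition by Step 2, and the first-order condition is the primal equation. In particular \eqref{alg_diff} has exactly one solution at each step. Induction from the common initial data then gives the stated equivalence. No condition $\gamma\le 1/L$ is needed here, since that assumption is used only later in the Lyapunov analysis.
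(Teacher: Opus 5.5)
Your proof is correct and follows the paper's own derivation. You rewrite the dual equation as the multiplier update via $c_k=1+\alpha_k$, then use $c_k\lambda_{k+1}-\alpha_k\lambda_k=p_k+\delta c_k(c_kAx_{k+1}-r_k)$ to turn the primal equation into the first-order condition of the subproblem. You also make explicit what the paper leaves implicit: because the subproblem is strongly convex, the first-order condition is equivalent to being its unique minimizer, and this gives both directions of the equivalence at each step.
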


Consequently, in the convergence analysis, we shall work with the sequence
generated by Algorithm \ref{al_main} and use the coupled relations
\eqref{alg_diff}.

 \section{A unified energy estimate}\label{sec3}

In this section, we derive a Lyapunov estimate that will be used throughout the
subsequent convergence and rate analysis. Let
$\{(x_k,\lambda_k)\}_{k\ge0}$ be the sequence generated by Algorithm
\ref{al_main}, and set
\[
    z_k:=(x_k,\lambda_k),
    \qquad
    \bar z_k:=(\bar x_k,\bar\lambda_k).
\]
Define the positive definite matrix
\begin{equation}\label{eq_M}
    M:=
    \begin{pmatrix}
    \gamma^{-1}I_n & 0\\
    0 & \delta^{-1}I_m
    \end{pmatrix}.
\end{equation}
For any $z=(x,\lambda)\in\mathbb R^n\times\mathbb R^m$, we use the notation
\[
    \|z\|_M^2
    :=
    \langle Mz,z\rangle
    =
    \frac1\gamma\|x\|^2+\frac1\delta\|\lambda\|^2.
\]
For a fixed primal-dual solution $z^*=(x^*,\lambda^*)\in\Omega$, define the augmented
Lagrangian gap
\begin{equation}\label{eq_Hbeta}
\begin{aligned}
    H_k^{z^*}
    &:=
    \mathcal L_\beta(x_k,\lambda^*)
    -
    \mathcal L_\beta(x^*,\lambda^*)    \\
    &=
    f(x_k)-f(x^*)
    +\langle \lambda^*,Ax_k-b\rangle
    +\frac{\beta}{2}\|Ax_k-b\|^2 .
\end{aligned}
\end{equation}
By the saddle point inequality \eqref{eq:saddle}, one has
$H_k^{z^*}\ge0$ for all $k\ge0$.

The estimates below are based on the coupled relations \eqref{alg_diff}, which
are equivalent to Algorithm \ref{al_main} by Proposition
\ref{prop00}. We first derive an identity for the inertial
part of the energy. This identity is independent of whether Case I or Case II
is used.

\begin{lemma}\label{le_ener_B}
Let $\{(x_k,\lambda_k)\}_{k\ge0}$ be the sequence generated by Algorithm
\ref{al_main}, and let $z^*=(x^*,\lambda^*)\in\Omega$ be fixed. Define
\begin{equation}\label{eq_Bdef}
    B_k^{z^*}
    :=
    \frac12\|w_k^{z^*}\|_M^2
    +
    \frac{\eta(1-\eta)}{2}\|z_k-z^*\|_M^2,
    \qquad k\ge1,
\end{equation}
where
\begin{equation}\label{eq_w_def}
    w_k^{z^*}
    :=
    \eta(z_k-z^*)+(t_k-1)(z_k-z_{k-1}).
\end{equation}
Then, for every $k\ge1$, the following identity holds:
\begin{align}\label{eq_Bdiff}
B_{k+1}^{z^*}-B_k^{z^*}
={}&
-\eta t_{k+1}
\Big\langle x_{k+1}-x^*,
g_{k+1}
+\beta A^\top(Ax_{k+1}-b)
+A^\top\lambda^*
\Big\rangle
\notag\\
&-
t_{k+1}(t_{k+1}-\eta)
\Big\langle x_{k+1}-x_k,
g_{k+1}
+\beta A^\top(Ax_{k+1}-b)
+A^\top\lambda^*
\Big\rangle
\notag\\
&-
(1-\eta)\left(t_{k+1}-\frac12\right)
\|z_{k+1}-z_k\|_M^2
-\frac{t_{k+1}^2}{2}
\|z_{k+1}-\bar z_k\|_M^2 .
\end{align}
\end{lemma}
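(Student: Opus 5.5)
The identity is purely algebraic, so the plan is to expand $B_{k+1}^{z^*}-B_k^{z^*}$ and to substitute the coupled relations \eqref{alg_diff}, which hold by Proposition~\ref{prop00}. Write $\Delta z:=z_{k+1}-z_k$ and $d:=z_{k+1}-\bar z_k$.

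\textbf{Step 1: increment of $w_k^{z^*}$.} The definition of $\bar z_k$ gives $t_{k+1}d=t_{k+1}\Delta z-(t_k-1)(z_k-z_{k-1})$. Hence
\[
w_{k+1}^{z^*}-w_k^{z^*}=(\eta-1)\Delta z+t_{k+1}d .
\]
I then use $\tfrac12\|a\|_M^2-\tfrac12\|b\|_M^2=\langle M(a-b),a\rangle-\tfrac12\|a-b\|_M^2$ with $a=w_{k+1}^{z^*}$ and $b=w_k^{z^*}$. For the second half of $B_k^{z^*}$ I use
\[
\tfrac{\eta(1-\eta)}{2}\bigl(\|z_{k+1}-z^*\|_M^2-\|z_k-z^*\|_M^2\bigr)
=\eta(1-\eta)\langle M\Delta z,z_{k+1}-z^*\rangle-\tfrac{\eta(1-\eta)}{2}\|\Delta z\|_M^2 .
\]
Using the cancellation relation $\eta\alpha_k=t_{k+1}-\eta$, I split
\[
w_{k+1}^{z^*}=u+(\eta-1)\Delta z,\qquad u:=\eta(z_{k+1}-z^*)+\eta\alpha_k\Delta z .
\]

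\textbf{Step 2: the term $t_{k+1}\langle Md,u\rangle$.} By \eqref{alg_diff}, $Md$ has primal component $-\bigl[g_{k+1}+A^\top(c_k\lambda_{k+1}-\alpha_k\lambda_k)+\beta A^\top(Ax_{k+1}-b)\bigr]$. Its dual component is $c_kAx_{k+1}-\alpha_kAx_k-b$. The primal part of $u$ is $\eta(c_kx_{k+1}-\alpha_kx_k-x^*)$, and the dual part is $\eta(c_k\lambda_{k+1}-\alpha_k\lambda_k-\lambda^*)$. Since $Ax^*=b$, the bilinear terms
\[
-\langle A u_x,\,c_k\lambda_{k+1}-\alpha_k\lambda_k\rangle
\quad\text{and}\quad
\langle c_k\lambda_{k+1}-\alpha_k\lambda_k,\,A u_x\rangle
\]
cancel by skew-symmetry. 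What survives is $-\langle Au_x,\lambda^*\rangle$. Multiplied by $t_{k+1}$, this produces exactly the first two lines of \eqref{eq_Bdiff}, because $t_{k+1}\eta\alpha_k=t_{k+1}(t_{k+1}-\eta)$.

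\textbf{Step 3: the remaining terms.} Several pieces are left:
\begin{itemize}
\item $t_{k+1}(\eta-1)\langle Md,\Delta z\rangle$;
\item $(\eta-1)\langle M\Delta z,w_{k+1}^{z^*}\rangle=(\eta-1)\eta\langle M\Delta z,z_{k+1}-z^*\rangle+(\eta-1)(t_{k+1}-1)\|\Delta z\|_M^2$;
\item $-\tfrac12\|(\eta-1)\Delta z+t_{k+1}d\|_M^2$;
\item $-\tfrac{\eta(1-\eta)}{2}\|\Delta z\|_M^2$.
\end{itemize}
The term $(\eta-1)\eta\langle M\Delta z,z_{k+1}-z^*\rangle$ cancels exactly against $\eta(1-\eta)\langle M\Delta z,z_{k+1}-z^*\rangle$ from Step 1.

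Expanding the square gives $-\tfrac{(1-\eta)^2}{2}\|\Delta z\|_M^2-t_{k+1}(\eta-1)\langle M\Delta z,d\rangle-\tfrac{t_{k+1}^2}{2}\|d\|_M^2$. Its cross term cancels the first listed cross term. The leftover $\|\Delta z\|_M^2$ coefficient is
\[
-(1-\eta)(t_{k+1}-1)-\tfrac{(1-\eta)^2}{2}-\tfrac{\eta(1-\eta)}{2}=-(1-\eta)\bigl(t_{k+1}-\tfrac12\bigr),
\]
and the $d$-term is $-\tfrac{t_{k+1}^2}{2}\|d\|_M^2$, which together give the last line of \eqref{eq_Bdiff}. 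None of this uses the form of $g_{k+1}$, so the identity holds in both Case I and Case II.

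The main obstacle is the bookkeeping in Step 2. The splitting of $w_{k+1}^{z^*}$ must be chosen so that the primal and dual pieces of $u$ are exactly the combinations $c_k(\cdot)_{k+1}-\alpha_k(\cdot)_k$ appearing in \eqref{alg_diff}. Only then does the skew cancellation of the $A$, $A^\top$ coupling go through, and that is precisely where $\eta\alpha_k=t_{k+1}-\eta$ is needed.
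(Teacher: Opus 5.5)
Your proposal is correct and follows essentially the paper's route: the same increment $w_{k+1}^{z^*}-w_k^{z^*}=t_{k+1}(z_{k+1}-\bar z_k)-(1-\eta)(z_{k+1}-z_k)$, the same polarization identity, and substitution of \eqref{alg_diff} using $\eta\alpha_k=t_{k+1}-\eta$, with the coefficients correctly collapsing to $-(1-\eta)(t_{k+1}-\tfrac12)$ and $-\tfrac{t_{k+1}^2}{2}$. The only difference is bookkeeping. You pre-combine the two weighted inner products into $t_{k+1}\langle M(z_{k+1}-\bar z_k),u\rangle$ with $u=\eta(c_kz_{k+1}-\alpha_kz_k-z^*)$, so the coupling cancels in one skew-symmetric step. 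The paper instead computes $\langle M(z_{k+1}-z^*),z_{k+1}-\bar z_k\rangle$ and $\langle M(z_{k+1}-z_k),z_{k+1}-\bar z_k\rangle$ separately, and cancels the coupling only after weighting them by $\eta t_{k+1}$ and $t_{k+1}(t_{k+1}-\eta)$.
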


\begin{proof}
By the definition of $\bar z_k$, we have
\[
    t_{k+1}(z_{k+1}-\bar z_k)
    =
    t_{k+1}(z_{k+1}-z_k)
    -(t_k-1)(z_k-z_{k-1}).
\]
Consequently,
\begin{align}\label{eq_w_diff}
w_{k+1}^{z^*}-w_k^{z^*}
&=
\eta(z_{k+1}-z_k)
+(t_{k+1}-1)(z_{k+1}-z_k)
-(t_k-1)(z_k-z_{k-1})
\notag\\
&=
t_{k+1}(z_{k+1}-\bar z_k)
-(1-\eta)(z_{k+1}-z_k).
\end{align}
It follows that
\begin{align*}
\frac12\|w_{k+1}^{z^*}-w_k^{z^*}\|_M^2
={}&
\frac{t_{k+1}^2}{2}\|z_{k+1}-\bar z_k\|_M^2
+\frac{(1-\eta)^2}{2}\|z_{k+1}-z_k\|_M^2
\\
&-
(1-\eta)t_{k+1}
\left\langle
M(z_{k+1}-z_k),z_{k+1}-\bar z_k
\right\rangle .
\end{align*}
Using
$
    \frac12\|a\|_M^2-\frac12\|b\|_M^2
    =
    \langle Ma,a-b\rangle
    -\frac12\|a-b\|_M^2
$
with $a=w_{k+1}^{z^*}$ and $b=w_k^{z^*}$, together with \eqref{eq_w_diff}, we
obtain
\begin{align}\label{eq_w_norm}
\frac12\|w_{k+1}^{z^*}\|_M^2
-\frac12\|w_k^{z^*}\|_M^2
={}&
\langle Mw_{k+1}^{z^*},w_{k+1}^{z^*}-w_k^{z^*}\rangle
-\frac12\|w_{k+1}^{z^*}-w_k^{z^*}\|_M^2
\notag\\
={}&
\eta t_{k+1}
\langle M(z_{k+1}-z^*),z_{k+1}-\bar z_k\rangle
\notag\\
&+
t_{k+1}(t_{k+1}-\eta)
\langle M(z_{k+1}-z_k),z_{k+1}-\bar z_k\rangle
\notag\\
&-
\eta(1-\eta)
\langle M(z_{k+1}-z^*),z_{k+1}-z_k\rangle
\notag\\
&-
(1-\eta)\left(t_{k+1}-\frac{1+\eta}{2}\right)
\|z_{k+1}-z_k\|_M^2
-\frac{t_{k+1}^2}{2}\|z_{k+1}-\bar z_k\|_M^2.
\end{align}

Applying the same identity to $z_{k+1}-z^*$ and $z_k-z^*$ gives
\begin{align}\label{eq_z_diff}
&\frac{\eta(1-\eta)}{2}\|z_{k+1}-z^*\|_M^2
-\frac{\eta(1-\eta)}{2}\|z_k-z^*\|_M^2
\notag\\
&\qquad={}
\eta(1-\eta)
\langle M(z_{k+1}-z^*),z_{k+1}-z_k\rangle
-
\frac{\eta(1-\eta)}{2}\|z_{k+1}-z_k\|_M^2.
\end{align}
Adding \eqref{eq_w_norm} and \eqref{eq_z_diff}, the mixed terms cancel, and
therefore
\begin{align}\label{eq_B_diff2}
B_{k+1}^{z^*}-B_k^{z^*}
={}&
\eta t_{k+1}
\langle M(z_{k+1}-z^*),z_{k+1}-\bar z_k\rangle
\notag\\
&+
t_{k+1}(t_{k+1}-\eta)
\langle M(z_{k+1}-z_k),z_{k+1}-\bar z_k\rangle
\notag\\
&-
(1-\eta)\left(t_{k+1}-\frac12\right)
\|z_{k+1}-z_k\|_M^2
-\frac{t_{k+1}^2}{2}\|z_{k+1}-\bar z_k\|_M^2.
\end{align}

It remains to rewrite the first two inner products by using the coupled scheme
\eqref{alg_diff}. Since $Ax^*=b$, the primal equation gives
\[
\frac1\gamma(x_{k+1}-\bar x_k)
=
-\left[
g_{k+1}
+\beta A^\top(Ax_{k+1}-b)
+A^\top\lambda^*
+A^\top\bigl(\lambda_{k+1}-\lambda^*
+\alpha_k(\lambda_{k+1}-\lambda_k)\bigr)
\right],
\]
and the dual equation gives
\[
\frac1\delta(\lambda_{k+1}-\bar\lambda_k)
=
A(x_{k+1}-x^*)+\alpha_kA(x_{k+1}-x_k).
\]
Thus, by the definition of $M$,
\begin{align}\label{eq_le1_1}
\langle M(z_{k+1}-z^*),z_{k+1}-\bar z_k\rangle
={}&
\frac{1}{\gamma}\langle x_{k+1}-x^*,x_{k+1}-\bar x_k\rangle
+
\frac{1}{\delta}\langle \lambda_{k+1}-\lambda^*,\lambda_{k+1}-\bar \lambda_k\rangle
\notag\\
={}&
-\Big\langle x_{k+1}-x^*,
g_{k+1}
+\beta A^\top(Ax_{k+1}-b)
+A^\top\lambda^*
\Big\rangle
\notag\\
&+
\alpha_k
\Big[
-\langle Ax_{k+1}-b,\lambda_{k+1}-\lambda_k\rangle
+
\langle \lambda_{k+1}-\lambda^*,A(x_{k+1}-x_k)\rangle
\Big].
\end{align}
Similarly,
\begin{align}\label{eq_le1_2}
\langle M(z_{k+1}-z_k),z_{k+1}-\bar z_k\rangle={}&
-\Big\langle x_{k+1}-x_k,
g_{k+1}
+\beta A^\top(Ax_{k+1}-b)
+A^\top\lambda^*
\Big\rangle
\notag\\
&-
\langle A(x_{k+1}-x_k),\lambda_{k+1}-\lambda^*\rangle
+
\langle \lambda_{k+1}-\lambda_k,Ax_{k+1}-b\rangle.
\end{align}
Multiplying \eqref{eq_le1_1} by $\eta t_{k+1}$ and \eqref{eq_le1_2} by
$t_{k+1}(t_{k+1}-\eta)$, the coupling terms cancel because
$\eta\alpha_k=t_{k+1}-\eta$. Hence
\begin{align*}
&\eta t_{k+1}
\langle M(z_{k+1}-z^*),z_{k+1}-\bar z_k\rangle
+
t_{k+1}(t_{k+1}-\eta)
\langle M(z_{k+1}-z_k),z_{k+1}-\bar z_k\rangle
\\
&\qquad=
-\eta t_{k+1}
\Big\langle x_{k+1}-x^*,
g_{k+1}
+\beta A^\top(Ax_{k+1}-b)
+A^\top\lambda^*
\Big\rangle
\\
&\quad\qquad-
t_{k+1}(t_{k+1}-\eta)
\Big\langle x_{k+1}-x_k,
g_{k+1}
+\beta A^\top(Ax_{k+1}-b)
+A^\top\lambda^*
\Big\rangle.
\end{align*}
Substituting this identity into \eqref{eq_B_diff2} gives \eqref{eq_Bdiff}.
\end{proof}

We now estimate the full Lyapunov sequence. Although Case I and Case II use
different gradient evaluations, the final energy inequality has the same form in
both cases.

\begin{lemma}\label{lem_energys}
Let $\{(x_k,\lambda_k)\}_{k\ge0}$ be the sequence generated by Algorithm
\ref{al_main}, and let $z^*=(x^*,\lambda^*)\in\Omega$ be fixed. Define the
energy sequence by
\begin{equation}\label{eq_E_def}
    \mathcal E_k^{z^*}
    :=
    t_k^2H_k^{z^*}+B_k^{z^*},
\end{equation}
where $H_k^{z^*}$ is defined in \eqref{eq_Hbeta} and $B_k^{z^*}$ is defined in
\eqref{eq_Bdef}. Then, for both Case I and Case II,
\[
\mathcal E_{k+1}^{z^*}-\mathcal E_k^{z^*}
\le
(\rho-\eta)t_{k+1}H_k^{z^*}
-
(1-\eta)\left(t_{k+1}-\frac12\right)
\|z_{k+1}-z_k\|_M^2.
\]
\end{lemma}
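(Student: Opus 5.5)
Combining Lemma \ref{le_ener_B} with an estimate of the potential part $t_{k+1}^2H_{k+1}^{z^*}-t_k^2H_k^{z^*}$ should give the claim. Write $\Phi(x):=f(x)+\langle\lambda^*,Ax-b\rangle+\frac\beta2\|Ax-b\|^2$, so that $H_k^{z^*}=\Phi(x_k)-\Phi(x^*)$ and $\nabla\Phi(x)=\nabla f(x)+A^\top\lambda^*+\beta A^\top(Ax-b)$. Denote $G_{k+1}:=g_{k+1}+\beta A^\top(Ax_{k+1}-b)+A^\top\lambda^*$; this is exactly the vector appearing in \eqref{eq_Bdiff}. The key task is to show
\begin{align*}
\eta t_{k+1}\langle x_{k+1}-x^*,G_{k+1}\rangle
&+t_{k+1}(t_{k+1}-\eta)\langle x_{k+1}-x_k,G_{k+1}\rangle \\
&\ge \eta t_{k+1}H_{k+1}^{z^*}+t_{k+1}(t_{k+1}-\eta)\bigl(H_{k+1}^{z^*}-H_k^{z^*}\bigr)-\frac{t_{k+1}^2}{2}\|x_{k+1}-\bar x_k\|^2_{1/\gamma},
\end{align*}
where the last term is needed only in Case II and is absorbed by $-\frac{t_{k+1}^2}{2}\|z_{k+1}-\bar z_k\|_M^2$ in \eqref{eq_Bdiff}.

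In Case I, $G_{k+1}=\nabla\Phi(x_{k+1})$, and convexity of $\Phi$ directly gives $\langle x_{k+1}-x^*,G_{k+1}\rangle\ge H_{k+1}^{z^*}$ and $\langle x_{k+1}-x_k,G_{k+1}\rangle\ge H_{k+1}^{z^*}-H_k^{z^*}$, using $\Phi(x^*)$ as the common reference. In Case II, split $\Phi=f+q$, where $q$ is the convex quadratic part, whose gradient is evaluated exactly at $x_{k+1}$. For $f$, use the standard three-point inequality for $L$-smooth convex functions with $\gamma\le1/L$: for any $u$,
\[
\langle \nabla f(\bar x_k),x_{k+1}-u\rangle\ge f(x_{k+1})-f(u)-\frac{L}{2}\|x_{k+1}-\bar x_k\|^2 ,
\]
obtained from the descent lemma plus convexity at $\bar x_k$. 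Apply this with $u=x^*$ and $u=x_k$, weighted by $\eta t_{k+1}$ and $t_{k+1}(t_{k+1}-\eta)$. The error terms sum to $\frac{L}{2}t_{k+1}^2\|x_{k+1}-\bar x_k\|^2\le\frac{t_{k+1}^2}{2\gamma}\|x_{k+1}-\bar x_k\|^2$, which is dominated by the last term of \eqref{eq_Bdiff}. This absorption is the step to check carefully; it is where $\gamma\le1/L$ is used.

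Inserting these bounds into \eqref{eq_Bdiff} gives
\[
B_{k+1}^{z^*}-B_k^{z^*}\le -t_{k+1}^2H_{k+1}^{z^*}+t_{k+1}(t_{k+1}-\eta)H_k^{z^*}-(1-\eta)\left(t_{k+1}-\tfrac12\right)\|z_{k+1}-z_k\|_M^2 .
\]
Adding $t_{k+1}^2H_{k+1}^{z^*}-t_k^2H_k^{z^*}$, the $H_{k+1}$ terms cancel, and the coefficient of $H_k^{z^*}$ becomes $t_{k+1}^2-\eta t_{k+1}-t_k^2$. By Assumption \ref{ass_tk} this is at most $(\rho-\eta)t_{k+1}$, and $H_k^{z^*}\ge0$ by \eqref{eq:saddle}. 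This yields the stated inequality. The main obstacle is the Case II bookkeeping: the weights $\eta t_{k+1}$ and $t_{k+1}(t_{k+1}-\eta)$ must combine exactly to $t_{k+1}^2$, so that the smoothness error is covered by the negative $\|z_{k+1}-\bar z_k\|_M^2$ term.
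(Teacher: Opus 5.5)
Your proposal is correct and follows essentially the same route as the paper's proof. Case I uses convexity, which you phrase through $\Phi$, and Case II uses the three-point inequality of Lemma~\ref{le_eqsmf}, weighted by $\eta t_{k+1}$ and $t_{k+1}(t_{k+1}-\eta)$, with the total error $\frac{L}{2}t_{k+1}^2\|x_{k+1}-\bar x_k\|^2$ absorbed by $-\frac{t_{k+1}^2}{2}\|z_{k+1}-\bar z_k\|_M^2$ via $\gamma\le 1/L$; both cases then finish with Assumption~\ref{ass_tk} and $H_k^{z^*}\ge0$. The one step you leave implicit, as the paper does, is that the weight $t_{k+1}(t_{k+1}-\eta)$ is nonnegative, because $t_{k+1}\ge t_1=1\ge\eta$; this is what lets you multiply the inequalities without reversing them.
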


\begin{proof}
We prove the two cases separately.

\medskip
\noindent
\textbf{Case I.}
In this case, $g_{k+1}=\nabla f(x_{k+1})$. By the convexity of $f$ and
$Ax^*=b$, we obtain
\begin{align}\label{eq_le1_3}
&\Big\langle x_{k+1}-x^*,
\nabla f(x_{k+1})
+\beta A^\top(Ax_{k+1}-b)
+A^\top\lambda^*
\Big\rangle
\notag\\
&\qquad\ge
f(x_{k+1})-f(x^*)
+\langle \lambda^*,Ax_{k+1}-b\rangle
+\beta\|Ax_{k+1}-b\|^2
\notag\\
&\qquad\ge
H_{k+1}^{z^*}.
\end{align}
Similarly, again by convexity,
\begin{align}\label{eq_le1_4}
&\Big\langle x_{k+1}-x_k,
\nabla f(x_{k+1})
+\beta A^\top(Ax_{k+1}-b)
+A^\top\lambda^*
\Big\rangle
\notag\\
&\quad\ge
f(x_{k+1})-f(x_k)
+\langle \lambda^*,A(x_{k+1}-x_k)\rangle
+\beta\langle A(x_{k+1}-x_k),Ax_{k+1}-b\rangle
\notag\\
&\quad\ge H_{k+1}^{z^*}-H_k^{z^*},
\end{align}
where the last inequality follows from
\[
\langle A(x_{k+1}-x_k),Ax_{k+1}-b\rangle
=
\frac12\Big(
\|Ax_{k+1}-b\|^2-\|Ax_k-b\|^2
+\|A(x_{k+1}-x_k)\|^2
\Big).
\]
Combining Lemma \ref{le_ener_B} with \eqref{eq_le1_3} and \eqref{eq_le1_4}, we
get
\begin{align*}
\mathcal E_{k+1}^{z^*}-\mathcal E_k^{z^*}
&\le
t_{k+1}^2H_{k+1}^{z^*}-t_k^2H_k^{z^*}
+B_{k+1}^{z^*}-B_k^{z^*}
\\
&\le
\left(t_{k+1}^2-t_k^2-\eta t_{k+1}\right)H_k^{z^*}
\\
&\quad
-
(1-\eta)\left(t_{k+1}-\frac12\right)
\|z_{k+1}-z_k\|_M^2
-\frac{t_{k+1}^2}{2}
\|z_{k+1}-\bar z_k\|_M^2 .
\end{align*}
Using Assumption \ref{ass_tk} and dropping the last nonpositive term gives the
desired estimate.

\medskip
\noindent
\textbf{Case II.}
In this case, $g_{k+1}=\nabla f(\bar x_k)$. Since $f$ is $L$-smooth, Lemma
\ref{le_eqsmf} gives
\[
    \langle x_{k+1}-x^*,\nabla f(\bar x_k)\rangle
    \ge
    f(x_{k+1})-f(x^*)
    -\frac{L}{2}\|x_{k+1}-\bar x_k\|^2
\]
and
\[
    \langle x_{k+1}-x_k,\nabla f(\bar x_k)\rangle
    \ge
    f(x_{k+1})-f(x_k)
    -\frac{L}{2}\|x_{k+1}-\bar x_k\|^2.
\]
Therefore, by the same argument as in Case I,
\[
\Big\langle x_{k+1}-x^*,
\nabla f(\bar x_k)
+\beta A^\top(Ax_{k+1}-b)
+A^\top\lambda^*
\Big\rangle
\ge
H_{k+1}^{z^*}
-\frac{L}{2}\|x_{k+1}-\bar x_k\|^2
\]
and
\[
\Big\langle x_{k+1}-x_k,
\nabla f(\bar x_k)
+\beta A^\top(Ax_{k+1}-b)
+A^\top\lambda^*
\Big\rangle
\ge
H_{k+1}^{z^*}-H_k^{z^*}
-\frac{L}{2}\|x_{k+1}-\bar x_k\|^2.
\]
Combining these two inequalities with Lemma \ref{le_ener_B} yields
\begin{align}\label{eq_diffE2}
\mathcal E_{k+1}^{z^*}-\mathcal E_k^{z^*}
&\le
\left(t_{k+1}^2-t_k^2-\eta t_{k+1}\right)H_k^{z^*}
+\frac{Lt_{k+1}^2}{2}\|x_{k+1}-\bar x_k\|^2
\notag\\
&\quad
-
(1-\eta)\left(t_{k+1}-\frac12\right)
\|z_{k+1}-z_k\|_M^2
-\frac{t_{k+1}^2}{2}
\|z_{k+1}-\bar z_k\|_M^2 .
\end{align}
By \eqref{eq_M},
\[
\|z_{k+1}-\bar z_k\|_M^2
=
\frac{1}{\gamma}\|x_{k+1}-\bar x_k\|^2
+
\frac{1}{\delta}\|\lambda_{k+1}-\bar \lambda_k\|^2
\ge
\frac{1}{\gamma}\|x_{k+1}-\bar x_k\|^2.
\]
Since $\gamma\le 1/L$, the last term in \eqref{eq_diffE2} dominates the
positive term involving $L\|x_{k+1}-\bar x_k\|^2/2$. Using Assumption
\ref{ass_tk}, we obtain the desired estimate.
\end{proof}

\begin{remark}
Lemma \ref{lem_energys} is the basic Lyapunov estimate for the subsequent
analysis. It implies the monotonicity of $\{\mathcal E_k^{z^*}\}_{k\ge1}$ when
$\eta\ge\rho$, and hence gives the boundedness of the generated sequence.
Moreover, when $0<\rho<\eta<1$, the resulting weighted summability estimates
will be used to prove convergence of the   primal-dual sequence and the
improved $o(1/t_k^2)$ rates.
\end{remark}

 \section{Convergence analysis}\label{sec4}

In this section, we establish the convergence properties of Algorithm
\ref{al_main}. We first derive the basic convergence estimates from the unified
energy inequality in Lemma \ref{lem_energys}. We then prove convergence of the
  primal-dual sequence. Finally, in the noncritical parameter regime, we
improve the standard $\bigO(1/t_k^2)$ estimates to $o(1/t_k^2)$ rates and derive a
stationarity residual estimate in the smooth case.

\subsection{Convergence rates}

We first establish the basic convergence estimates for Algorithm \ref{al_main}.
They include weighted summability, boundedness, and accelerated rates for the
augmented Lagrangian gap, feasibility violation, and objective residual. These
estimates will be used in the subsequent sequence convergence and little-o
analysis.

\begin{theorem}\label{thm_Orates}
Let $\{(x_k,\lambda_k)\}_{k\ge0}$ be generated by Algorithm \ref{al_main}, and
fix $(x^*,\lambda^*)\in\Omega$. Then the following conclusions hold.
\begin{enumerate}
\item[\rm(i)] Weighted summability:
\[
    (\eta-\rho)\sum_{k=1}^{+\infty}
    t_{k+1}(\mathcal L_\beta(x_k,\lambda^*)
    -\mathcal L_\beta(x^*,\lambda^*))<+\infty,
\]
\[
     (1-\eta) \sum_{k=1}^{+\infty}
    t_{k+1}\|(x_{k+1},\lambda_{k+1})-(x_{k},\lambda_{k})\|_M^2<+\infty.
\]

\item[\rm(ii)]
The augmented Lagrangian gap satisfies
\[
    \mathcal L_\beta(x_k,\lambda^*)
    -\mathcal L_\beta(x^*,\lambda^*)
    \le
    \frac{\mathcal E_1^{z^*}}{t_k^2},
    \qquad k\ge1,
\]
where
\[
    \mathcal E_1^{z^*}
    =
    \mathcal L_\beta(x_1,\lambda^*)
    -
    \mathcal L_\beta(x^*,\lambda^*)
    +\frac{\eta}{2\gamma}\|x_1-x^*\|^2
    +\frac{\eta}{2\delta}\|\lambda_1-\lambda^*\|^2.
\]

\item[\rm(iii)]
The sequence $\{(x_k,\lambda_k)\}_{k\ge 0}$ is bounded, and
\[
    \|(x_k,\lambda_k)-(x_{k-1},\lambda_{k-1})\|_M
    \le
    \frac{2\sqrt{2\mathcal E_1^{z^*}}}{t_k-1},
    \qquad k>1.
\]

\item[\rm(iv)]
There exists a constant $C>0$ such that the feasibility violation and the
objective residual satisfy
\[
    \|Ax_k-b\|
    \le
    \frac{C}{t_k^2},
    \qquad
    |f(x_k)-f(x^*)|
    \le
    \frac{
    \mathcal E_1^{z^*}
    +\|\lambda^*\|C
    +\beta C^2/2}{t_k^2},
    \qquad k\ge1.
\]
\end{enumerate}
\end{theorem}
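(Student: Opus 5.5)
Everything rests on Lemma \ref{lem_energys}: since $\eta\ge\rho$ and $H_k^{z^*}\ge0$, the right-hand side is nonpositive, so $\{\mathcal E_k^{z^*}\}$ is nonincreasing.

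For (i), I will sum the inequality of Lemma \ref{lem_energys} from $k=1$ to $N$. This gives
\[
(\eta-\rho)\sum_{k=1}^N t_{k+1}H_k^{z^*}+(1-\eta)\sum_{k=1}^N\Bigl(t_{k+1}-\tfrac12\Bigr)\|z_{k+1}-z_k\|_M^2\le \mathcal E_1^{z^*}.
\]
Since $t_{k+1}\ge1$, we have $t_{k+1}-\frac12\ge\frac12 t_{k+1}$, and letting $N\to\infty$ gives both summabilities.

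For (ii), monotonicity gives $t_k^2H_k^{z^*}\le\mathcal E_k^{z^*}\le\mathcal E_1^{z^*}$, using $B_k^{z^*}\ge0$ (note $\eta\le1$). The explicit value of $\mathcal E_1^{z^*}$ follows from $t_1=1$ and $z_1=z_0$: then $w_1^{z^*}=\eta(z_1-z^*)$, so
\[
B_1^{z^*}=\tfrac{\eta^2+\eta(1-\eta)}{2}\|z_1-z^*\|_M^2=\tfrac\eta2\|z_1-z^*\|_M^2.
\]

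For (iii), from $\frac12\|w_k^{z^*}\|_M^2\le\mathcal E_1^{z^*}$ I get $\|w_k^{z^*}\|_M\le\sqrt{2\mathcal E_1^{z^*}}$. Boundedness of $z_k$ is the delicate point when $\eta=1$, because then the term $\frac{\eta(1-\eta)}2\|z_k-z^*\|_M^2$ gives nothing. In that case I will use the standard recursion argument. Write $u_k=z_k-z^*$, so that $\eta u_k+(t_k-1)(u_k-u_{k-1})=w_k^{z^*}$. Then $(t_k-1+\eta)u_k=(t_k-1)u_{k-1}+w_k^{z^*}$, and since $\eta>0$ this yields
\[
\|u_k\|_M\le\max\Bigl\{\|u_{k-1}\|_M,\ \tfrac{1}{\eta}\sqrt{2\mathcal E_1^{z^*}}\Bigr\}.
\]
By induction, $\|u_k\|_M\le\max\{\|u_1\|_M,\eta^{-1}\sqrt{2\mathcal E_1^{z^*}}\}$. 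Using $\eta\|u_1\|_M^2\le 2\mathcal E_1^{z^*}$, this can be sharpened to $\|u_k\|_M\le\sqrt{2\mathcal E_1^{z^*}}/\eta$. Then
\[
(t_k-1)\|z_k-z_{k-1}\|_M\le\|w_k^{z^*}\|_M+\eta\|u_k\|_M\le2\sqrt{2\mathcal E_1^{z^*}}.
\]
The constant in the statement, $2\sqrt{2\mathcal E_1^{z^*}}$, is consistent with this, although I may need to track constants carefully in the case $\eta<1$. There the cruder bound $\|u_k\|_M^2\le 2\mathcal E_1^{z^*}/(\eta(1-\eta))$ is also available, but the recursion bound is uniform in $\eta$.

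For (iv), the main obstacle is feasibility, since $H_k^{z^*}$ alone does not control $\|Ax_k-b\|$ at rate $1/t_k^2$. I will use the dual equation of \eqref{alg_diff}, which can be rewritten as
\[
\frac1\delta\bigl(t_{k+1}(\lambda_{k+1}-\bar\lambda_k)\bigr)=t_{k+1}\bigl[(Ax_{k+1}-b)+\alpha_kA(x_{k+1}-x_k)\bigr].
\]
With $\eta\alpha_k=t_{k+1}-\eta$, the right-hand side equals $\frac{t_{k+1}}\eta\bigl[\eta(Ax_{k+1}-b)+(t_{k+1}-\eta)A(x_{k+1}-x_k)\bigr]$. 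The dual component of \eqref{eq_w_diff} gives
\[
t_{k+1}(\lambda_{k+1}-\bar\lambda_k)=w^\lambda_{k+1}-w^\lambda_k+(1-\eta)(\lambda_{k+1}-\lambda_k).
\]
Set $v_k:=t_k^2(Ax_k-b)$ up to lower-order terms, or more precisely $v_k:=\eta(Ax_k-b)+(t_k-1)A(x_k-x_{k-1})$ scaled appropriately. Then $v_k$ satisfies a telescoping relation of the form $t_{k+1}v_{k+1}-t_k v_k\approx\frac\eta\delta(w^\lambda_{k+1}-w^\lambda_k)$ plus $(1-\eta)$ correction terms that are bounded using (i) and (iii). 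Summing, I expect
\[
\bigl\|t_k\bigl(\eta(Ax_k-b)+(t_k-1)A(x_k-x_{k-1})\bigr)\bigr\|\le C',
\]
using boundedness of $w^\lambda_k$ and of $\lambda_k$. This leaves a first-order recursion in $q_k:=t_k^2(Ax_k-b)$, namely $q_k$ expressed through $q_{k-1}$ with coefficients close to $1$ plus bounded increments. I will close it by the same max-type induction used for boundedness in (iii), yielding $\|Ax_k-b\|\le C/t_k^2$. Deriving this telescoping identity exactly, using $t_{k+1}^2-t_k^2\le\rho t_{k+1}$ and $\eta\ge\rho$ to keep the coefficients favorable, is the step I expect to be hardest.

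Finally, the objective residual follows from (ii) and the feasibility bound. We have
\[
f(x_k)-f(x^*)=H_k^{z^*}-\langle\lambda^*,Ax_k-b\rangle-\tfrac\beta2\|Ax_k-b\|^2\le\frac{\mathcal E_1^{z^*}+\|\lambda^*\|C}{t_k^2},
\]
and
\[
f(x_k)-f(x^*)\ge-\|\lambda^*\|\|Ax_k-b\|-\tfrac\beta2\|Ax_k-b\|^2\ge-\frac{\|\lambda^*\|C+\beta C^2/2}{t_k^2},
\]
using $H_k^{z^*}\ge0$ and $t_k\ge1$. Together these give the stated bound.
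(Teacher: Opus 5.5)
Your parts (i)--(iii) and the final objective-residual estimate are correct and follow the paper. Your recursion for $u_k=z_k-z^*$ is exactly Lemma~\ref{le_wbound}, and it is uniform in $\eta\in(0,1]$, so no separate constant tracking is needed for $\eta<1$. The gap is in the feasibility bound of (iv).

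\textbf{The relation you sum is not a telescoping one.} The dual equation does not give $t_{k+1}v_{k+1}-t_kv_k\approx\frac{\eta}{\delta}(w^\lambda_{k+1}-w^\lambda_k)$. With your $v_k=\eta(Ax_k-b)+(t_k-1)A(x_k-x_{k-1})$ it gives exactly
\[
\frac{\delta}{\eta}\,t_{k+1}\bigl[v_{k+1}+(1-\eta)A(x_{k+1}-x_k)\bigr]
=t_{k+1}(\lambda_{k+1}-\bar\lambda_k)
=t_{k+1}(\lambda_{k+1}-\lambda_k)-(t_k-1)(\lambda_k-\lambda_{k-1}).
\]
So it is $t_{k+1}v_{k+1}$ itself, not a difference, that equals a telescoping increment. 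A termwise bound on $t_kv_k$ already follows from (iii) without any summation. What summation actually yields is boundedness of the \emph{partial sums}, and that is exactly the information your plan discards.

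\textbf{Termwise-bounded increments cannot close the argument.} With $r_k=t_k^2(Ax_k-b)$, the exact recursion is $r_{k+1}=(1-a_k)r_k+e_k$. Here $a_k=\bigl(\eta t_{k+1}-(t_{k+1}^2-t_k^2)\bigr)/t_k^2$ and $e_k=\frac{\eta}{\delta}t_{k+1}(\lambda_{k+1}-\bar\lambda_k)$ is bounded.
\begin{enumerate}
\item[$\bullet$] In the critical Nesterov case $\rho=\eta=1$ with $t_{k+1}^2-t_k^2=t_{k+1}$, one has $a_k=0$. Equivalently, the coefficient $t_k^2(t_k-1)/\bigl((t_k-1+\eta)t_{k-1}^2\bigr)$ of $q_{k-1}$ in your recursion is exactly $1$. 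Then $\|r_{k+1}\|\le\|r_k\|+K$ gives only $\|r_k\|\le\|r_1\|+Kk$.
\item[$\bullet$] For $\rho<\eta$ with equality in the growth condition, $a_k\approx(\eta-\rho)/t_k$. A max-type induction on $\|r_{k+1}\|\le(1-a_k)\|r_k\|+K$ then gives only $\|r_k\|=O(t_k)$.
\end{enumerate}
Either way you get roughly $\|Ax_k-b\|=O(1/t_k)$, not $O(1/t_k^2)$.

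\textbf{The fix is to keep the summed identity.} Sum $r_{j+1}-(1-a_j)r_j=e_j$ over $j=1,\dots,k$ and use $\lambda_1=\lambda_0$:
\[
r_{k+1}+\sum_{j=1}^k a_jr_j
=r_1+\frac{\eta}{\delta}\bigl[\lambda_{k+1}-\lambda_1+(t_{k+1}-1)(\lambda_{k+1}-\lambda_k)\bigr].
\]
The right-hand side is bounded by (iii). Also $a_j\in[0,1)$, where nonnegativity uses $t_{j+1}^2-t_j^2\le\rho t_{j+1}\le\eta t_{j+1}$.

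Your max-type induction does work, but on the averaged sequence $P_k:=\sum_{j\le k}a_jr_j$, not on $r_k$. This sequence satisfies the convex-combination recursion $P_{k+1}=(1-a_{k+1})P_k+a_{k+1}(r_{k+1}+P_k)$, with $\|r_{k+1}+P_k\|$ uniformly bounded. This yields $\sup_k\|P_k\|<\infty$, hence $\sup_k\|r_k\|<\infty$. This is Lemma~\ref{le_heauto}, which is how the paper concludes.

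In the critical case $a_k=0$, the identity alone already bounds $r_{k+1}$. This shows that the cancellation in the sum, not any termwise estimate, is what produces the $1/t_k^2$ rate.
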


\begin{proof}
Set $z_k=(x_k,\lambda_k)$ and $z^*=(x^*,\lambda^*)$. By Lemma
\ref{lem_energys}, we have
\begin{equation}\label{eq_E_diff2}
\mathcal E_{k+1}^{z^*}-\mathcal E_k^{z^*}
\le
(\rho-\eta)t_{k+1}H_k^{z^*}
-
(1-\eta)\left(t_{k+1}-\frac12\right)
\|z_{k+1}-z_k\|_M^2 .
\end{equation}
Since $H_k^{z^*}\ge0$, $B_k^{z^*}\ge0$, and $0<\rho\le\eta\le1$, it follows that
$\mathcal E_k^{z^*}\ge0$ and
\begin{equation}\label{eq_diffE_bound}
    \mathcal E_{k+1}^{z^*}\le \mathcal E_k^{z^*},
    \qquad k\ge1.
\end{equation}
Thus $\{\mathcal E_k^{z^*}\}_{k\ge1}$ is nonincreasing and bounded from below.
Summing \eqref{eq_E_diff2} from $k=1$ to $N$ and letting $N\to+\infty$ gives
\({\rm (i)}\).

From \eqref{eq_diffE_bound} and the definition of $\mathcal E_k^{z^*}$ in
\eqref{eq_E_def}, we have
\[
    t_k^2H_k^{z^*}
    \le
    \mathcal E_k^{z^*}
    \le
    \mathcal E_1^{z^*}.
\]
Together with \eqref{eq_Hbeta}, this proves \({\rm (ii)}\).

Next, by \eqref{eq_diffE_bound} and the definition of $\mathcal E_k^{z^*}$ and $\eta<\le 1$, we
obtain
\begin{equation}\label{eq_wbound}
    \|w_k^{z^*}\|_M
    \le \sqrt{2B_k^{z^*}}
    \le
    \sqrt{2\mathcal E_1^{z^*}},
    \qquad k\ge1.
\end{equation}
Applying Lemma \ref{le_wbound} with $a_k=t_k-1$ and
$W=\sqrt{2\mathcal E_1^{z^*}}$, and using \eqref{eq_w_def} and
\eqref{eq_wbound}, we obtain
\[
    \|z_k-z^*\|_M
    \le
    \frac{\sqrt{2\mathcal E_1^{z^*}}}{\eta},
    \qquad k\ge1,
\]
and
\[
    (t_k-1)\|z_k-z_{k-1}\|_M
    \le
    2\sqrt{2\mathcal E_1^{z^*}},
    \qquad k\ge1.
\]
This proves \({\rm (iii)}\).

It remains to estimate the feasibility violation and the objective residual.
Since $\alpha_k=(t_{k+1}-\eta)/\eta$, the dual equation in \eqref{alg_diff}
gives
\[
\lambda_{k+1}-\bar\lambda_k
=
\frac{\delta}{\eta}
\left[
t_{k+1}(Ax_{k+1}-b)
-
(t_{k+1}-\eta)(Ax_k-b)
\right].
\]
Multiplying both sides by $t_{k+1}$ gives
\begin{equation}\label{eq_dual_diff}
t_{k+1}(\lambda_{k+1}-\bar\lambda_k)
=
\frac{\delta}{\eta}
\left[
r_{k+1}
-
\frac{t_{k+1}(t_{k+1}-\eta)}{t_k^2}r_k
\right],
\end{equation}
where
\[
    r_k:=t_k^2(Ax_k-b).
\]
Define
\begin{equation}\label{eq_ak0}
    a_k
    :=
    1-\frac{t_{k+1}(t_{k+1}-\eta)}{t_k^2}
    =
    \frac{\eta t_{k+1}-(t_{k+1}^2-t_k^2)}{t_k^2}.
\end{equation}
Using the definition of $\bar\lambda_k$, \eqref{eq_dual_diff} can be rewritten
as
\[
    r_{k+1}-r_k+a_kr_k
    =
    \frac{\eta}{\delta}
    \left[
    t_{k+1}(\lambda_{k+1}-\lambda_k)
    -(t_k-1)(\lambda_k-\lambda_{k-1})
    \right].
\]
Summing this identity from $j=1$ to $k$ yields
\begin{align}\label{eq_rdiff}
r_{k+1}-r_1
+\sum_{j=1}^{k}a_jr_j
=
\frac{\eta}{\delta}
\left[
\lambda_{k+1}-\lambda_1
+
(t_{k+1}-1)(\lambda_{k+1}-\lambda_k)
\right].
\end{align}
Indeed,
\[
\begin{aligned}
&\sum_{j=1}^k
\left[
t_{j+1}(\lambda_{j+1}-\lambda_j)
-(t_j-1)(\lambda_j-\lambda_{j-1})
\right]
\\
&\qquad =
\sum_{j=1}^k
(\lambda_{j+1}-\lambda_j)
+
\sum_{j=1}^k
\left[
(t_{j+1}-1)(\lambda_{j+1}-\lambda_j)
-(t_j-1)(\lambda_j-\lambda_{j-1})
\right]
\\
&\qquad =
\lambda_{k+1}-\lambda_1+(t_{k+1}-1)(\lambda_{k+1}-\lambda_k),
\end{aligned}
\]
where the initialization $\lambda_1=\lambda_0$ has been used.

By \({\rm (iii)}\), the sequence $\{\lambda_k\}$ is bounded and
$\{(t_{k+1}-1)\|\lambda_{k+1}-\lambda_k\|\}$ is bounded. Hence the right-hand
side of \eqref{eq_rdiff} is bounded. Therefore, there exists a constant
$C_0>0$ such that
\begin{equation}\label{eq_rsum}
    \left\|
    r_{k+1}
    +
    \sum_{j=1}^{k}a_jr_j
    \right\|
    \le C_0,
    \qquad k\ge1.
\end{equation}
Moreover, by \eqref{eq_ak0} and Assumption \ref{ass_tk}, together with
$\rho\le\eta$, we have $a_k\ge0$. Also
\[
    1-a_k=
    \frac{t_{k+1}(t_{k+1}-\eta)}{t_k^2}>0.
\]
Thus $a_k\in[0,1)$. Applying Lemma \ref{le_heauto} to \eqref{eq_rsum}, we get
\[
   \sup_{k\ge1}\|t_k^2(Ax_k-b)\|
   =
   \sup_{k\ge1}\|r_k\|<+\infty.
\]
Therefore, there exists a constant $C>0$ such that
\[
    \|Ax_k-b\|
    \le
    \frac{C}{t_k^2},
    \qquad k\ge1.
\]
Finally, by the definition of $H_k^{z^*}$ in \eqref{eq_Hbeta} and \({\rm (ii)}\),
we obtain
\[
\begin{aligned}
|f(x_k)-f(x^*)|
&\le
H_k^{z^*}
+
\|\lambda^*\|\,\|Ax_k-b\|
+
\frac{\beta}{2}\|Ax_k-b\|^2
\\
&\le
\frac{\mathcal E_1^{z^*}}{t_k^2}
+
\frac{\|\lambda^*\|C}{t_k^2}
+
\frac{\beta C^2}{2t_k^4}.
\end{aligned}
\]
Since $t_k\ge1$, we have $t_k^{-4}\le t_k^{-2}$. This proves \({\rm (iv)}\).
The proof is complete.
\end{proof}

\begin{remark}\label{rem:comparison-basic-rates}
Theorem \ref{thm_Orates} gives $\bigO(1/t_k^2)$ estimates under the general
parameter condition in Assumption \ref{ass_tk}. Compared with the fast ALM of
Bo{\c t}, Csetnek and Nguyen \cite{BotMP} and the primal-dual accelerated ALM of
He, Hu and Fang \cite{HeNA}, where primal-dual Nesterov extrapolation is
also considered but the corresponding rates are proved for  Nesterov rule  \cite{Nesterov1983} with $t_{k+1}=\frac{1+\sqrt{1+4t_k^2}}{2}$, Chamboll-Dossal  rule \cite{ChambolleJota} with $t_k=\frac{k+\alpha-2}{\alpha-1}$ and  Attouch-Cabot  rule  \cite{Attouch18siam} with $t_k=\frac{k-1}{\alpha-1}$, Algorithm \ref{al_main} allows a broader class of
Nesterov sequences $\{t_k\}$ while keeping inertia on both the primal and
dual variables.
\end{remark}

 \subsection{Convergence of the primal-dual sequence}

We next prove convergence of the  primal-dual sequence. By Theorem
\ref{thm_Orates}, the generated sequence is bounded and hence has cluster
points. The proof consists of two parts: we first show that every cluster point
belongs to the KKT set $\Omega$, and then prove that the set of cluster points is
a singleton.

\begin{lemma}\label{lem_saddle}
Let $\{(x_k,\lambda_k)\}_{k\ge0}$ be generated by Algorithm \ref{al_main}. Then
every cluster point of $\{(x_k,\lambda_k)\}_{k\ge 0}$ belongs to $\Omega$.
\end{lemma}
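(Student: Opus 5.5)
Let $(\hat x,\hat\lambda)$ be a cluster point, with $(x_{k_j},\lambda_{k_j})\to(\hat x,\hat\lambda)$; it exists by the boundedness in Theorem \ref{thm_Orates}(iii). I need $A\hat x=b$ and $\nabla f(\hat x)+A^\top\hat\lambda=0$.

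Feasibility is immediate. Theorem \ref{thm_Orates}(iv) gives $\|Ax_k-b\|\le C/t_k^2$, and $t_k\to+\infty$, so $A\hat x=b$ by continuity.

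For stationarity I will pass to the limit in the primal equation of \eqref{alg_diff}, written at index $k=k_j-1$ so that $x_{k+1}=x_{k_j}$:
\[
g_{k+1}+A^\top\lambda_{k+1}+\beta A^\top(Ax_{k+1}-b)
=-\frac1\gamma(x_{k+1}-\bar x_k)-\alpha_kA^\top(\lambda_{k+1}-\lambda_k).
\]
It suffices to show that the right-hand side tends to $0$ along the whole sequence.

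First, $x_{k+1}-\bar x_k=(x_{k+1}-x_k)-\frac{t_k-1}{t_{k+1}}(x_k-x_{k-1})$. Both terms are $O(1/t_k)\to0$ by Theorem \ref{thm_Orates}(iii), since $(t_k-1)/t_{k+1}\le1$.

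The term $\alpha_kA^\top(\lambda_{k+1}-\lambda_k)$ is the main obstacle. Here $\alpha_k\sim t_{k+1}/\eta$, and (iii) only gives $t_{k+1}\|\lambda_{k+1}-\lambda_k\|$ bounded, not vanishing. To handle it I will use the dual equation of \eqref{alg_diff}, and only the component $A^\top(\cdot)$ is needed.

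1. Rewrite the dual equation as in the proof of Theorem \ref{thm_Orates}:
\[
t_{k+1}(\lambda_{k+1}-\lambda_k)-(t_k-1)(\lambda_k-\lambda_{k-1})
=\frac{\delta}{\eta}\bigl[t_{k+1}(Ax_{k+1}-b)-(t_{k+1}-\eta)(Ax_k-b)\bigr].
\]
The right-hand side is $O(1/t_k)\to0$ by (iv).
2. Set $u_k:=t_{k+1}(\lambda_{k+1}-\lambda_k)$. Step 1 gives $u_k=\frac{t_k-1}{t_k}u_{k-1}+o(1)$. The factor $(t_k-1)/t_k=1-1/t_k$ has $\sum 1/t_k=+\infty$ by Remark \ref{re_tk}, so $\prod_k(1-1/t_k)=0$.
3. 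A standard discrete Gronwall/averaging argument for $u_k=(1-\epsilon_k)u_{k-1}+e_k$ with $\epsilon_k=1/t_k$ then requires $e_k/\epsilon_k\to0$. Here $e_k/\epsilon_k=O(1)$ only, because $t_k\|e_k\|$ involves $t_k^2\|Ax_k-b\|$, which is merely bounded. So this step does not close directly.
4. I will therefore avoid the limit of $u_k$ and use the primal equation once more. Testing the primal relation against a fixed $v\in\ker A$ kills every $A^\top$ term, which gives $\langle \nabla f(\hat x),v\rangle=0$ in Case I. (In Case II, $\nabla f(\bar x_k)\to\nabla f(\hat x)$ along the subsequence since $\bar x_k-x_{k+1}\to0$.) Hence $\nabla f(\hat x)\in\operatorname{range}A^\top$.
5. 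It remains to identify the multiplier as $\hat\lambda$, i.e. to show $A^\top\alpha_k(\lambda_{k+1}-\lambda_k)\to0$ along the subsequence.

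For step 5, my first attempt is the summability in Theorem \ref{thm_Orates}(i), $\sum t_{k+1}\|z_{k+1}-z_k\|_M^2<\infty$, when $\eta<1$. This gives $\liminf t_{k+1}^2\|z_{k+1}-z_k\|^2=0$ along some indices, but not along the chosen subsequence. So I will instead run an alternative route that avoids the problem term. I would pass to the limit in the energy-type identity from Lemma \ref{le_ener_B} applied with a general test point $z=(x,\lambda)$ in place of $z^*$, using convexity to get the variational inequality $\mathcal L_\beta(\hat x,\lambda)\le\mathcal L_\beta(x,\hat\lambda)$. Concretely, I would replace $z^*$ by arbitrary $(x,\lambda)$ in \eqref{eq_le1_1}–\eqref{eq_le1_2}. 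The $\alpha_k$ coupling terms still cancel through $\eta\alpha_k=t_{k+1}-\eta$. Summing the result with weights, together with $t_k\to\infty$ and the boundedness of the energies, should yield the saddle inequality \eqref{eq:saddle} for $(\hat x,\hat\lambda)$, and hence $(\hat x,\hat\lambda)\in\Omega$.

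I expect this last step, controlling the dual inertial correction $\alpha_k(\lambda_{k+1}-\lambda_k)$, to be the main difficulty.
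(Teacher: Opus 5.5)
There is a genuine gap. Your feasibility argument and Step 4 are fine, but Step 5 carries the whole difficulty, and it is only announced ("should yield"). The route you sketch does not work as stated.

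The cancellation via $\eta\alpha_k=t_{k+1}-\eta$ in \eqref{eq_le1_1}--\eqref{eq_le1_2} uses $Ax^*=b$. For a general test point $(x,\lambda)$ one is left with the extra term
\[
t_{k+1}\bigl\langle \eta(\lambda_{k+1}-\lambda)+(t_{k+1}-\eta)(\lambda_{k+1}-\lambda_k),\,Ax-b\bigr\rangle ,
\]
which is of order $t_{k+1}$ and has no sign. Even for feasible test points, the gap $\mathcal L_\beta(x_k,\lambda)-\mathcal L_\beta(x,\lambda)$ can be negative, so the energy is no longer monotone. In any case, weighted sums of energy inequalities give ergodic information. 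The limit $\hat\lambda$ enters $B_k^{z}$ only through $\lambda_k-\lambda$ and $(t_k-1)(\lambda_k-\lambda_{k-1})$, which do not vanish. So it is not clear how passing to the limit along $\{k_j\}$ would produce $\nabla f(\hat x)+A^\top\hat\lambda=0$.

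The missing idea is to run the averaging argument on $A^\top\lambda_k$ itself rather than on the increments $u_k$. Your Step 3 failed precisely because the forcing term there is only bounded. Solving the primal equation of \eqref{alg_diff} for $A^\top\lambda_{k+1}$ and using $A^\top\lambda^*=-\nabla f(x^*)$ gives
\[
A^\top(\lambda_{k+1}-\lambda^*)=\Bigl(1-\frac{\eta}{t_{k+1}}\Bigr)A^\top(\lambda_k-\lambda^*)+\frac{\eta}{t_{k+1}}q_k,
\]
where
\[
q_k=-\bigl(g_{k+1}-\nabla f(x^*)\bigr)-\beta A^\top(Ax_{k+1}-b)-\frac1\gamma(x_{k+1}-\bar x_k).
\]
Since $\sum_k 1/t_k=+\infty$, Lemma \ref{le_bertsekas} yields $A^\top\lambda_k\to A^\top\lambda^*$ as soon as $q_k\to0$. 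This must hold along the whole sequence, not just along $\{k_j\}$, and that requires only vanishing quantities, no rates. The only nontrivial piece is $g_{k+1}\to\nabla f(x^*)$, which goes as follows:
\begin{enumerate}
\item[(a)] Your Step 4 (or $f(x_k)\to f(x^*)$ together with feasibility) shows that every cluster point of $\{x_k\}$ is a primal solution.
\item[(b)] Proposition \ref{prop_App1} then forces $\nabla f$ to equal $\nabla f(x^*)$ at every such cluster point.
\item[(c)] Boundedness of $\{x_k\}$ and continuity of $\nabla f$ upgrade this to $\nabla f(x_k)\to\nabla f(x^*)$. In Case II, add $\|\bar x_k-x_{k+1}\|\to0$ and the Lipschitz bound.
\end{enumerate}
With $A^\top\lambda_k\to A^\top\lambda^*$, passing to the limit along $\{k_j\}$ gives $\nabla f(\hat x)+A^\top\hat\lambda=0$ directly. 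The term $\alpha_kA^\top(\lambda_{k+1}-\lambda_k)\to0$ that you were trying to control then follows a posteriori from the primal equation.
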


\begin{proof}
Fix an arbitrary point $z^*:=(x^*,\lambda^*)\in\Omega$. We prove the result in
three steps.

\medskip
\noindent
\textbf{Step 1.} We first prove that
\begin{equation}\label{eq_nf_con}
    \nabla f(x_k)\to \nabla f(x^*).
\end{equation}
Since $t_k\to+\infty$, by Theorem \ref{thm_Orates}, the sequence $\{x_k\}$ is bounded. Moreover,
\begin{equation}\label{eq_facon}
    \|Ax_k-b\|\to0,
    \qquad
    f(x_k)\to f(x^*).
\end{equation}
Let $\bar x$ be an arbitrary cluster point of $\{x_k\}$. Then there exists a
subsequence $\{x_{k_j}\}$ such that $x_{k_j}\to\bar x$. Passing to the limit in
\eqref{eq_facon}, and using the continuity of $A$ and $f$, we get
\[
    A\bar x=b,
    \qquad
    f(\bar x)=f(x^*).
\]
Hence $\bar x$ is a primal solution. Since $f$ is convex and continuously
differentiable and $\Omega\neq\emptyset$, there exists some multiplier
$\bar\lambda$ such that $(\bar x,\bar\lambda)\in\Omega$. Therefore, Proposition
\ref{prop_App1} gives
\[
    \nabla f(\bar x)=\nabla f(x^*).
\]

We now show that the   sequence $\{\nabla f(x_k)\}$ converges to
$\nabla f(x^*)$. Suppose, by contradiction, that \eqref{eq_nf_con} does not
hold. Then there exist $\varepsilon_0>0$ and a subsequence $\{x_{k_j}\}$ such
that
\begin{equation}\label{eq_diff_f}
    \|\nabla f(x_{k_j})-\nabla f(x^*)\|\ge \varepsilon_0,
    \qquad \forall j\ge1.
\end{equation}
Since $\{x_k\}$ is bounded, the subsequence $\{x_{k_j}\}$ has a further
subsequence, still denoted by $\{x_{k_j}\}$, such that $x_{k_j}\to\bar x$ for
some cluster point $\bar x$ of $\{x_k\}$. As shown above,
$\nabla f(\bar x)=\nabla f(x^*)$. Thus, by the continuity of $\nabla f$,
\[
    \nabla f(x_{k_j})\to \nabla f(\bar x)=\nabla f(x^*),
\]
which contradicts \eqref{eq_diff_f}. Hence \eqref{eq_nf_con} holds.

\medskip
\noindent
\textbf{Step 2.} We next prove that
\begin{equation}\label{eq_alamba}
    A^\top\lambda_k\to A^\top\lambda^*.
\end{equation}
From the primal update in the coupled form \eqref{alg_diff}, we have
\begin{equation}\label{eq_saddle_1}
x_{k+1}-\bar x_k
=
-\gamma\left[
g_{k+1}
+\beta A^\top(Ax_{k+1}-b)
+A^\top\bigl(\lambda_{k+1}
+\alpha_k(\lambda_{k+1}-\lambda_k)\bigr)
\right],
\end{equation}
where $g_{k+1}$ is defined in \eqref{eq_def_g0}. Since
$\alpha_k=t_{k+1}/\eta-1$, we have
\[
    \lambda_{k+1}+\alpha_k(\lambda_{k+1}-\lambda_k)
    =
   \frac{t_{k+1}}{\eta}\lambda_{k+1}-\left(\frac{t_{k+1}}{\eta}-1\right)\lambda_k.
\]
Using $\nabla f(x^*)+A^\top\lambda^*=0$, we rewrite
\eqref{eq_saddle_1} as
\begin{equation}\label{eq_Alam}
A^\top\lambda_{k+1}-A^\top\lambda^*
=
\left(1-\frac{\eta}{t_{k+1}}\right)
\left(A^\top\lambda_k-A^\top\lambda^*\right)
+
\frac{\eta}{t_{k+1}}q_k,
\end{equation}
where
\begin{equation}\label{eq_qk_s}
    q_k
    :=
    -\left(g_{k+1}-\nabla f(x^*)\right)
    -\beta A^\top(Ax_{k+1}-b)
    -\frac1\gamma(x_{k+1}-\bar x_k).
\end{equation}

We claim that $q_k\to0$. By Theorem \ref{thm_Orates} and $t_k\to+\infty$, we
have $\|Ax_k-b\|\to0$ and $\|x_k-x_{k-1}\|\to0$. Moreover,
\begin{equation}\label{eq_diffxxbar}
\|x_{k+1}-\bar x_k\|
\le
\|x_{k+1}-x_k\|
+
\frac{t_k-1}{t_{k+1}}\|x_k-x_{k-1}\|
\to0.
\end{equation}
In Case I, $g_{k+1}=\nabla f(x_{k+1})$, and Step 1 gives
$g_{k+1}\to \nabla f(x^*)$. In Case II, $g_{k+1}=\nabla f(\bar x_k)$. Since
$\nabla f$ is Lipschitz continuous,
\[
\begin{aligned}
\|\nabla f(\bar x_k)-\nabla f(x^*)\|
&\le
\|\nabla f(\bar x_k)-\nabla f(x_{k+1})\|
+
\|\nabla f(x_{k+1})-\nabla f(x^*)\|
\\
&\le
L\|\bar x_k-x_{k+1}\|
+
\|\nabla f(x_{k+1})-\nabla f(x^*)\|.
\end{aligned}
\]
Together with \eqref{eq_diffxxbar} and Step 1, this gives
$g_{k+1}\to\nabla f(x^*)$ also in Case II. Hence $q_k\to0$ in both cases.

Set $\omega_k:=\|A^\top\lambda_k-A^\top\lambda^*\|$ and
$\sigma_k:=\eta/t_{k+1}$. Taking norms in \eqref{eq_Alam} gives
\begin{equation}\label{eq_akk}
    \omega_{k+1}\le (1-\sigma_k)\omega_k+\sigma_k\|q_k\|.
\end{equation}
Since $0<\eta\le1$ and $t_{k+1}\ge1$, one has $0<\sigma_k\le1$. Moreover, by
Remark \ref{re_tk},
\[
    \sum_{k=1}^{+\infty}\sigma_k
    =
    \sum_{k=1}^{+\infty}\frac{\eta}{t_{k+1}}
    =
    +\infty.
\]
Since $q_k\to0$, Lemma \ref{le_bertsekas} applied to \eqref{eq_akk} gives
$\omega_k\to0$, which proves \eqref{eq_alamba}.

\medskip
\noindent
\textbf{Step 3.} Let $(\bar x,\bar\lambda)$ be an arbitrary cluster point of
$\{(x_k,\lambda_k)\}_{k\ge0}$. Then there exists a subsequence $\{k_j\}$ such
that
\[
   (x_{k_j},\lambda_{k_j})\to(\bar x,\bar \lambda).
\]
By Theorem \ref{thm_Orates}, $\|Ax_k-b\|\to0$, and hence $A\bar x=b$. Moreover,
by Step 1, Step 2, and $\nabla f(x^*)+A^\top\lambda^*=0$,
\[
\begin{aligned}
\nabla f(x_k)+A^\top\lambda_k
&=
\bigl(\nabla f(x_k)-\nabla f(x^*)\bigr)
+
\bigl(A^\top\lambda_k-A^\top\lambda^*\bigr)
\to0.
\end{aligned}
\]
Passing to the limit along $\{k_j\}$ and using the continuity of $\nabla f$, we
obtain
\[
    \nabla f(\bar x)+A^\top\bar\lambda=0.
\]
Together with $A\bar x=b$, this gives $(\bar x,\bar\lambda)\in\Omega$.

Since the cluster point was arbitrary, every cluster point of
$\{(x_k,\lambda_k)\}_{k\ge0}$ belongs to $\Omega$. The proof is complete.
\end{proof}

Lemma \ref{lem_saddle} shows that every cluster point of the generated sequence
is a primal-dual solution. We now prove that the   primal-dual sequence converges to
a point in $\Omega$.

\begin{theorem}\label{thm_cov}
Let $\{(x_k,\lambda_k)\}_{k\ge0}$ be generated by Algorithm \ref{al_main}.
Then there exists $(x^*,\lambda^*)\in\Omega$ such that
\[
    (x_k,\lambda_k)\to (x^*,\lambda^*),\qquad \text{as } k\to+\infty.
\]
\end{theorem}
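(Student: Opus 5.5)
The plan is an Opial-type argument in which the role of $\|z_k-z^*\|_M$ is played by the energy $\mathcal E_k^{z^*}$ of Lemma \ref{lem_energys}. The first ingredients are already available. By Theorem \ref{thm_Orates}(iii) the sequence $\{z_k\}$ is bounded, so it has cluster points, and by Lemma \ref{lem_saddle} all of them lie in $\Omega$. By Lemma \ref{lem_energys} (with $\eta\ge\rho$), $\{\mathcal E_k^{z^*}\}$ is nonincreasing and nonnegative for \emph{every} $z^*\in\Omega$, so $\lim_k\mathcal E_k^{z^*}$ exists for every $z^*\in\Omega$. It then suffices to show that two cluster points $\bar z=(\bar x,\bar\lambda)$ and $\hat z=(\hat x,\hat\lambda)$ must coincide. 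I would argue by comparing $\mathcal E_k^{\bar z}$ and $\mathcal E_k^{\hat z}$.

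\textbf{Step 1: the difference of two energies.} Put $d:=\hat z-\bar z$. Since $f(\bar x)=f(\hat x)$ (both are primal solutions), we have
\[
t_k^2\bigl(H_k^{\bar z}-H_k^{\hat z}\bigr)=t_k^2\langle\bar\lambda-\hat\lambda,Ax_k-b\rangle=\langle\bar\lambda-\hat\lambda,r_k\rangle ,
\]
with $r_k=t_k^2(Ax_k-b)$ as in the proof of Theorem \ref{thm_Orates}. Using $w_k^{\bar z}=w_k^{\hat z}+\eta d$ and expanding the two quadratics in $B_k$, the difference $\mathcal E_k^{\bar z}-\mathcal E_k^{\hat z}$ equals a constant plus
\[
\eta\,\bigl\langle M d,\;z_k+(t_k-1)(z_k-z_{k-1})\bigr\rangle+\eta(1-\eta)\langle Md,z_k\rangle+\langle\bar\lambda-\hat\lambda,r_k\rangle .
\]
Hence this combination converges as $k\to\infty$.

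\textbf{Step 2: removing the term in $r_k$.} The idea is to use identity \eqref{eq_rdiff}, which expresses $r_{k+1}+\sum_{j\le k}a_jr_j$ through $\lambda_{k+1}+(t_{k+1}-1)(\lambda_{k+1}-\lambda_k)$. Paired with $\bar\lambda-\hat\lambda$, this is precisely the dual block of the inertial expression in Step 1, up to the factor $\eta/\delta$. Substituting it should reduce the convergent quantity to the form
\[
\phi_k+(t_k-1)(\phi_k-\phi_{k-1})+\text{(remainder)},\qquad \phi_k:=\langle Md',z_k\rangle,
\]
for a suitable fixed vector $d'$ built from $d$. The remainder consists of $\sum_{j}a_j\langle\bar\lambda-\hat\lambda,r_j\rangle$ and the term $\eta(1-\eta)\phi_k$.

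\textbf{Step 3: passing from the averaged quantity to $\phi_k$ itself.} Here I would use the standard fact that if $\phi_k+(t_k-1)(\phi_k-\phi_{k-1})$ converges, $\{\phi_k\}$ is bounded and $\sum 1/t_k=+\infty$ (Remark \ref{re_tk}), then $\phi_k$ converges. This follows from a recursion of the type in Lemma \ref{le_bertsekas} with $\sigma_k=1/t_k$. Once $\phi_k$ converges, evaluating its limit along the two subsequences tending to $\bar z$ and $\hat z$ gives $\langle Md',\bar z-\hat z\rangle=0$. If $d'$ is a positive multiple of $d$ (or at least pairs definitely with $d$), this forces $d=0$.

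\textbf{Main obstacle.} The hard part is controlling the remainder $\sum_j a_j r_j$ in the critical case $\rho=\eta=1$. There, Theorem \ref{thm_Orates}(i) yields no summability of $t_{k+1}H_k$, and the weights $a_j=(\eta t_{j+1}-(t_{j+1}^2-t_j^2))/t_j^2$ are merely nonnegative. My first attempt would be to show that the scalar series $\sum_j a_j\langle\bar\lambda-\hat\lambda,r_j\rangle$ converges. For this I would write $a_j r_j$ as a telescoping combination through the dual update \eqref{eq_dual_diff}, and use the boundedness of $r_j$ together with the bound $(t_k-1)\|z_k-z_{k-1}\|_M=\bigO(1)$ from Theorem \ref{thm_Orates}(iii). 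If that fails, my fallback is to move this sum into the averaging step: I would treat the whole combination as a weighted average with weights $a_j\in[0,1)$, exactly as in Lemma \ref{le_heauto}, and apply the same limiting argument to it. In the noncritical case $\eta<1$ the difficulty disappears, since the extra $\eta(1-\eta)\|z_k-z^*\|_M^2$ term and the summability in Theorem \ref{thm_Orates}(i) give the existence of $\lim\|z_k-z^*\|_M$ directly.
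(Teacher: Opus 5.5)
Your overall architecture matches the paper's: boundedness, cluster points in $\Omega$ via Lemma \ref{lem_saddle}, existence of $\lim_k\mathcal E_k^{z^*}$ for every $z^*\in\Omega$, comparing the energies of two cluster points, and a final $1/t_k$-averaging step via Lemma \ref{le_bertsekas}. As written, however, the proof does not close. The decisive step is deferred to an unresolved ``main obstacle'', and that obstacle does not exist, because the term $\langle\bar\lambda-\hat\lambda,r_k\rangle$ vanishes identically. Since $A\bar x=b$, and Proposition \ref{prop_App1} gives $A^\top\bar\lambda=A^\top\hat\lambda$ for any two points of $\Omega$,
\[
\langle\bar\lambda-\hat\lambda,Ax_k-b\rangle=\langle A^\top(\bar\lambda-\hat\lambda),x_k-\bar x\rangle=0\qquad\text{for all }k .
\]
Hence $H_k^{\bar z}=H_k^{\hat z}$ in both the critical and the noncritical regime. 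Your Step 1 expansion also double counts. Since $B_k^{z^*}=\frac{\eta}{2}\|z_k-z^*\|_M^2+\eta(t_k-1)\langle M(z_k-z^*),z_k-z_{k-1}\rangle+\frac{(t_k-1)^2}{2}\|z_k-z_{k-1}\|_M^2$, the first quadratic contributes $\eta\langle Md,\eta z_k+(t_k-1)(z_k-z_{k-1})\rangle$. Adding $\eta(1-\eta)\langle Md,z_k\rangle$ gives exactly
\[
\mathcal E_k^{\bar z}-\mathcal E_k^{\hat z}=\mathrm{const}+\eta\bigl[\phi_k+(t_k-1)(\phi_k-\phi_{k-1})\bigr],\qquad \phi_k=\langle Md,z_k\rangle,
\]
with no remainder. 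Your Step 3 then applies with $d'=d$ and yields $\|d\|_M^2=0$. This is precisely the paper's argument: its $h_k$, with $z^a=\bar z$ and $z^b=\hat z$, differs from $\phi_k$ by a constant.

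Neither of your proposed remedies would have worked. The tools you list (boundedness of $r_j$, the telescoped identity \eqref{eq_rdiff}, and $(t_k-1)\|z_k-z_{k-1}\|_M=O(1)$) bound the partial sums of $\sum_j a_j\langle\bar\lambda-\hat\lambda,r_j\rangle$ but cannot give their convergence. The reason is that $\sum_j a_j$ can diverge even when $\rho=\eta=1$: for the Chambolle--Dossal rule $t_k=\frac{k+\alpha-2}{\alpha-1}$ with $\alpha>3$, which satisfies Assumption \ref{ass_tk} with $\rho=1$, one has $a_j\sim(\alpha-3)/j$. Lemma \ref{le_heauto} likewise gives only boundedness. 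More seriously, Step 2 defeats itself. By \eqref{eq_rdiff},
\[
\langle\bar\lambda-\hat\lambda,r_k\rangle=\frac{\eta}{\delta}\bigl\langle\bar\lambda-\hat\lambda,\lambda_k+(t_k-1)(\lambda_k-\lambda_{k-1})\bigr\rangle-\sum_{j<k}a_j\langle\bar\lambda-\hat\lambda,r_j\rangle+\mathrm{const},
\]
and the first term is exactly the negative of the dual block of $\eta\langle Md,z_k+(t_k-1)(z_k-z_{k-1})\rangle$. The substitution therefore produces $d'=(\hat x-\bar x,0)$, so Step 3 would give only $\bar x=\hat x$ and would leave the dual cluster points unseparated. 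Finally, your remark that for $\eta<1$ the limit of $\|z_k-z^*\|_M$ exists ``directly'' is not justified by Theorem \ref{thm_Orates}(i), and it is not needed.
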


\begin{proof}
Set $z_k:=(x_k,\lambda_k)$. By Theorem \ref{thm_Orates}, $\{z_k\}_{k\ge0}$ is
bounded, and hence it has at least one cluster point. By Lemma
\ref{lem_saddle}, every cluster point of $\{z_k\}_{k\ge0}$ belongs to $\Omega$.
It remains to prove that the cluster point is unique.

Assume, by contradiction, that $\{z_k\}_{k\ge0}$ has two distinct cluster points
$z^a=(x^a,\lambda^a)$ and $z^b=(x^b,\lambda^b)$. Then
$z^a,z^b\in\Omega$. Hence there exist subsequences $\{z_{s_j}\}$ and
$\{z_{t_j}\}$ such that
\[
    z_{s_j}\to z^a,
    \qquad
    z_{t_j}\to z^b.
\]

For any $z^*=(x^*,\lambda^*)\in\Omega$, recall the energy
$\mathcal E_k^{z^*}$. By Lemma \ref{lem_energys},
$\{\mathcal E_k^{z^*}\}_{k\ge1}$ is nonincreasing and bounded from below.
Therefore, for every $z^*\in\Omega$, the limit
$\lim_{k\to+\infty}\mathcal E_k^{z^*}$ exists. In particular,
\[
    \lim_{k\to+\infty}\mathcal E_k^{z^a},
    \qquad
    \lim_{k\to+\infty}\mathcal E_k^{z^b}
\]
exist. Consequently,
\begin{equation}\label{eq_Phi}
    \Psi_k:=
    \frac1\eta\left(\mathcal E_k^{z^a}-\mathcal E_k^{z^b}\right)
\end{equation}
has a finite limit.

We now simplify the difference of the two energies. Since $z^a,z^b\in\Omega$,
we have $Ax^a=Ax^b=b$ and $f(x^a)=f(x^b)$. Moreover, Proposition
\ref{prop_App1} gives $A^\top\lambda^a=A^\top\lambda^b$. Hence, by
\eqref{eq_Hbeta},
\begin{equation}\label{eq_diffHk}
H_k^{z^a}-H_k^{z^b}
=
\langle \lambda^a-\lambda^b,Ax_k-b\rangle
=
\langle A^\top(\lambda^a-\lambda^b),x_k-x^a\rangle
=0.
\end{equation}
Therefore
\begin{eqnarray}\label{eq_diffEab}
	 \mathcal E_k^{z^{a}}-\mathcal E_k^{z^{b}}
    &=&
    B_k^{z^{a}}-B_k^{z^{b}}\\
  &=&  \frac{\eta}{2}\left(
    \|z_k-z^{a}\|_M^2-\|z_k-z^{b}\|_M^2  \right)-\eta(t_k-1)\ip{M(z^{a}-z^{b})}{z_k-z_{k-1}}\notag.
\end{eqnarray}
Define
\begin{equation}\label{eq_hkk}
    h_k:=
    \frac12\left(
    \|z_k-z^a\|_M^2-\|z_k-z^b\|_M^2
    \right).
\end{equation}
Then we have 
\[
    h_k-h_{k-1}
    =
    \left\langle M(z_k-z_{k-1}),z^b-z^a\right\rangle
\]
Combining this  with \eqref{eq_Phi} - \eqref{eq_diffEab}, we get
\[ 
    \Psi_k=  \frac{1}{\eta}(B_k^{z^a}-B_k^{z^b})
    =
    h_k+(t_k-1)(h_k-h_{k-1}),
\]
or equivalently,
\begin{equation}\label{eq_have}
    h_k
    =
    \left(1-\frac1{t_k}\right)h_{k-1}
    +
    \frac1{t_k}\Psi_k,\qquad k\ge1.
\end{equation}

Since $\Psi_k$ has a finite limit, say $\Psi_k\to\Psi_\infty$, \eqref{eq_have}
implies
\[
|h_k-\Psi_\infty|
\le
\left(1-\frac1{t_k}\right)|h_{k-1}-\Psi_\infty|
+
\frac1{t_k}|\Psi_k-\Psi_\infty|.
\]
By Assumption \ref{ass_tk} and  Remark \ref{re_tk}, we have $0<1/t_k\le 1$  and $\sum_{k=1}^{+\infty}1/t_k=+\infty$. Applying Lemma
\ref{le_bertsekas} gives
\[
    h_k\to\Psi_\infty.
\]
Thus $\{h_k\}$ has a unique limit.

On the other hand, along the subsequences $z_{s_j}\to z^a$ and
$z_{t_j}\to z^b$, \eqref{eq_hkk} gives
\[
    h_{s_j}\to
    -\frac12\|z^a-z^b\|_M^2,
    \qquad
    h_{t_j}\to
    \frac12\|z^a-z^b\|_M^2.
\]
Since $h_k$ has a unique limit, these two limits must be equal. Hence
\[
    \|z^a-z^b\|_M^2=0.
\]
Since $M$ is positive definite, $z^a=z^b$, which contradicts the assumption that
the two cluster points are distinct. Therefore, $\{z_k\}_{k\ge0}$ has a unique
cluster point.

Since $\{z_k\}_{k\ge0}$ is bounded and has a unique cluster point, the  
sequence converges. Denote its limit by $z^*=(x^*,\lambda^*)$. By Lemma
\ref{lem_saddle}, $z^*\in\Omega$. Therefore,
\[
    (x_k,\lambda_k)\to(x^*,\lambda^*),
    \qquad
    (x^*,\lambda^*)\in\Omega.
\]
\end{proof}

\begin{remark}
The convergence of the iterates for accelerated augmented Lagrangian-type methods in the merely convex setting has remained elusive in a long time.
Even in the unconstrained case,  the question of the convergence of the iterates for the Nesterov accelerated gradient method  has remained open until it was recently  resolved affirmatively  by Jang and Ryu \cite{Jang25} and Bo{\c t},  Fadili and Nguyen \cite{Bot25arx}. The first result on the convergence of the iterates for  accelerated augmented Lagrangian-type methods is due to  Bo{\c t}, Csetnek and Nguyen \cite{BotMP} who  proved the convergence of the  prima-dual sequence generated by a fast  augmented Lagrangian method  in the smooth and noncritical setting $\rho<1$. Compared with \cite{BotMP}, Theorem \ref{thm_cov}  establishes the convergence of the  primal-dual sequence of Algorithm \ref{al_main} and it  covers the critical Nesterov case $\rho=1$ and
the implicit-gradient case with a convex $C^1$ objective. When $A=0$ and $b=0$, Algorithm \ref{al_main} reduces to  the Nesterov accelerated gradient method for unconstrained convex optimization problems, and Theorem \ref{thm_cov} recovers the results on the convergence of the iterates  reported in \cite{Attouch18Mp,Bot25arx,Jang25}.
\end{remark}

 \subsection{Improved convergence rates in the noncritical case}

We now study the improved asymptotic behavior in the noncritical regime
$0<\rho<\eta<1$. The convergence of the   sequence proved above allows us
to improve the basic $\bigO(1/t_k^2)$ estimates to little-o rates.

\begin{theorem}\label{thm_o_rates}
Let $\{(x_k,\lambda_k)\}_{k\ge0}$ be generated by Algorithm \ref{al_main} with
$\rho\in(0,1)$ and $\eta\in(\rho,1)$. Let
$z^*=(x^*,\lambda^*)\in\Omega$ be the limit point of
$\{(x_k,\lambda_k)\}_{k\ge0}$. Then, as $k\to+\infty$,
\[
    \mathcal L_\beta(x_k,\lambda^*)
    -
    \mathcal L_\beta(x^*,\lambda^*)
    =
    o\left(\frac1{t_k^2}\right),
    \qquad
    \|(x_k,\lambda_k)-(x_{k-1},\lambda_{k-1})\|
    =
    o\left(\frac1{t_k}\right),
\]
and
\[
    \|Ax_k-b\|
    =
    o\left(\frac1{t_k^2}\right),
    \qquad
    |f(x_k)-f(x^*)|
    =
    o\left(\frac1{t_k^2}\right).
\]
\end{theorem}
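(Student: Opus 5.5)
The plan is to run the energy argument of Lemma \ref{lem_energys} a second time, now centred at the limit point $z^*$ supplied by Theorem \ref{thm_cov}. We exploit that for $\rho<\eta<1$ both series in Theorem \ref{thm_Orates}(i) are genuinely finite. Set $z_k=(x_k,\lambda_k)$ and introduce the auxiliary quantity
\[
W_k:=t_k^2H_k^{z^*}+\frac{(t_k-1)^2}{2}\|z_k-z_{k-1}\|_M^2 .
\]
We will show two things: that $W_k$ converges, and that $\sum_k W_k/t_{k+1}<+\infty$. Since $\sum_k 1/t_{k+1}=+\infty$ by Remark \ref{re_tk}, these two facts together force $W_k\to0$. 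This yields at once $H_k^{z^*}=o(1/t_k^2)$ and $\|z_k-z_{k-1}\|=o(1/t_k)$; the latter uses that $M$ is positive definite and that $t_k-1\sim t_k$.

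\textbf{Convergence of $W_k$.} Expanding the square in $B_k^{z^*}$ gives
\[
\mathcal E_k^{z^*}=W_k+\frac{\eta^2+\eta(1-\eta)}{2}\|z_k-z^*\|_M^2+\eta(t_k-1)\langle M(z_k-z^*),z_k-z_{k-1}\rangle .
\]
We then control each term separately:
\begin{itemize}
\item $\mathcal E_k^{z^*}$ is nonincreasing and nonnegative, hence convergent.
\item $\|z_k-z^*\|_M\to0$ by Theorem \ref{thm_cov}.
\item The cross term is bounded by $\eta\|z_k-z^*\|_M\,(t_k-1)\|z_k-z_{k-1}\|_M$. The second factor is bounded by Theorem \ref{thm_Orates}(iii), so the cross term tends to $0$.
\end{itemize}
Hence $W_k$ converges.

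\textbf{Summability of $W_k/t_{k+1}$.} By Assumption \ref{ass_tk}, $t_{k+1}^2\le t_k^2+\rho t_{k+1}$, so $t_{k+1}/t_k$ is bounded. Therefore $t_k^2/t_{k+1}\le Ct_k\le Ct_{k+1}$. Theorem \ref{thm_Orates}(i), which now has the nonzero factors $\eta-\rho$ and $1-\eta$, then gives
\[
\sum_k \frac{t_k^2H_k^{z^*}}{t_{k+1}}<\infty,
\qquad
\sum_k \frac{(t_k-1)^2\|z_k-z_{k-1}\|_M^2}{t_{k+1}}<\infty .
\]
For the second series we use an index shift together with the bound $(t_k-1)^2/t_{k+1}\le Ct_k$.

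\textbf{Feasibility.} This is the main obstacle, because $H_k^{z^*}$ only controls $\|Ax_k-b\|^2$. I will revisit identity \eqref{eq_rdiff} and write its right-hand side as
\[
\frac{\eta}{\delta}\Big[(\lambda_{k+1}-\lambda^*)+(t_{k+1}-1)(\lambda_{k+1}-\lambda_k)-(\lambda_1-\lambda^*)\Big].
\]
By the iterate convergence and the $o(1/t_k)$ bound just proved, this tends to the constant $c:=-\frac{\eta}{\delta}(\lambda_1-\lambda^*)$. Consequently, with $r_k=t_k^2(Ax_k-b)$ and $R_{k}:=\sum_{j<k}a_jr_j$, we have
\[
r_{k+1}=c-R_{k+1}+\varepsilon_k,\qquad \varepsilon_k\to0 .
\]
Substituting this into $R_{k+1}=R_k+a_kr_k$ yields
\[
R_{k+1}-c=(1-a_k)(R_k-c)+a_k\varepsilon_{k-1}.
\]
Here $a_k\in[0,1)$. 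Moreover, by \eqref{eq_ak0},
\[
a_k\ge \frac{(\eta-\rho)t_{k+1}}{t_k^2}\ge \frac{\eta-\rho}{t_k},
\]
so $\sum a_k=+\infty$. Lemma \ref{le_bertsekas} then gives $R_k\to c$, and therefore $r_{k+1}=c-R_{k+1}+\varepsilon_k\to0$, that is, $\|Ax_k-b\|=o(1/t_k^2)$.

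\textbf{Objective residual.} Finally, $|f(x_k)-f(x^*)|\le H_k^{z^*}+\|\lambda^*\|\|Ax_k-b\|+\frac\beta2\|Ax_k-b\|^2$, and every term on the right is $o(1/t_k^2)$, which gives the last claim.
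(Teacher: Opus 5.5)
Your proof is correct and follows the paper's argument essentially step by step. You expand $\mathcal E_k^{z^*}$, remove the $\|z_k-z^*\|_M^2$ and cross terms using iterate convergence, and combine the existence of the limit with the weighted summability of Theorem \ref{thm_Orates}(i) and $\sum 1/t_k=+\infty$ to force that limit to zero. You then run the same residual-averaging argument with $a_k\ge(\eta-\rho)/t_k$, inlining it rather than citing Lemma \ref{lem_F_res}. The one slip is that $r_{k+1}+R_{k+1}$ converges to $c+r_1$, not $c$, because you dropped the $-r_1$ on the left of \eqref{eq_rdiff}; this is harmless, since only the existence of a finite limit is used.
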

\begin{proof}
By Lemma \ref{lem_energys}, the limit
$\lim_{k\to+\infty}\mathcal E_k^{z^*}$ exists. Since $z_k=(x_k,\lambda_k)\to z^*=(x^*,\lambda^*)$,  from the definition of $\mathcal E_k^{z^*}$ we have 
\begin{eqnarray}\label{eq_convE}
	\lim_{k\to+\infty}\mathcal E_k^{z^*} = \lim_{k\to+\infty} \left[t_k^2H_k^{z^*}+\frac{(t_k-1)^2}{2}\|z_k-z_{k-1}\|_M^2+\eta(t_k-1)\ip{M(z_k-z^*)}{z_k-z_{k-1}}\right]
\end{eqnarray}
exists.  By Theorem \ref{thm_Orates} (iii), $\sup_{k\ge 1}(t_k-1)\|z_k-z_{k-1}\|<+\infty$,  which together with $z_k\to z^*$  implies 
\begin{eqnarray}\label{eq_diffzz}
	\lim_{k\to+\infty}(t_k-1)|\ip{M(z_k-z^*)}{z_k-z_{k-1}}|\leq  \sup_{k\ge 1}(t_k-1)\|z_k-z_{k-1}\|\cdot\lim_{k\to+\infty}\|M(z_k-z^*)\| = 0.
\end{eqnarray}

Since $\rho\in (0,1)$, $\eta\in(\rho,1)$ and $t_{k+1}\ge t_k\ge 1$,  Theorem \ref{thm_Orates} (i) gives the weighted summability
\begin{equation}\label{eq_o_sum}
    \sum_{k=1}^{+\infty}t_{k}(\mathcal L_\beta(x_k,\lambda^*)
    -
    \mathcal L_\beta(x^*,\lambda^*))<+\infty,
    \qquad
    \sum_{k=1}^{+\infty}t_{k}\|z_k-z_{k-1}\|_M^2<+\infty.
\end{equation}
It also implies $\lim_{k\to+\infty}(1-2t_k)\|z_k-z_{k-1}\|_M^2=0$. Then it follows from \eqref{eq_diffzz} and \eqref{eq_convE} that 
\begin{eqnarray}\label{eq_convE2}
	  \lim_{k\to+\infty} \left[t_k^2(\mathcal L_\beta(x_k,\lambda^*) -
    \mathcal L_\beta(x^*,\lambda^*)) +\frac{t_k^2}{2}\|z_k-z_{k-1}\|_M^2\right] = \ell,
\end{eqnarray}
with $\ell\ge 0$. Denote 
\[G_k = \mathcal L_\beta(x_k,\lambda^*) -
    \mathcal L_\beta(x^*,\lambda^*) +\frac{1}{2}\|z_k-z_{k-1}\|_M^2.\]
Then, from \eqref{eq_o_sum} and \eqref{eq_convE2}, we have 
\[ \sum_{k=1}^{+\infty}t_{k} G_k <+\infty\qquad\text{and}\qquad   \lim_{k\to+\infty}t^2_kG_k=\ell\ge 0.  \] 
Now, we prove that $\ell=0$. If not, $\ell>0$ then there exists $k_0\ge 0$ such that for all  $k\ge k_0$, $t_k^2G_k\ge \frac{\ell}{2}$. This implies
\[ \sum_{k=k_0}^{+\infty}t_kG_k \ge \frac{\ell}{2}\sum_{k=k_0}^{+\infty}\frac{1}{t_k}=+\infty. \]
where the last inequality follows from Remark \ref{re_tk}. This contradicts
$\sum_{k=1}^{+\infty}t_kG_k<+\infty,
    $. Hence $\ell=0$, from \eqref{eq_convE2} and the positive definiteness of  $M$ we have 
\begin{equation}\label{eq_oL}
	 \mathcal L_\beta(x_k,\lambda^*)
    -
    \mathcal L_\beta(x^*,\lambda^*)
    =
    o\left(\frac1{t_k^2}\right), \quad  \|(x_k,\lambda_k)-(x_{k-1},\lambda_{k-1})\|=o\left(\frac1{t_k}\right).
\end{equation}

Next, we prove the $ o\left(\frac1{t_k^2}\right)$ convergence rate of  the objective residual and the feasibility violation. For simplify, set 
\begin{equation}\label{eq_ainf}
	  r_k:=t_k^2(Ax_k-b).
\end{equation}
From \eqref{eq_dual_diff}--\eqref{eq_rdiff}, we have
\begin{align}\label{eq_newG}
r_{k+1}
+\sum_{j=1}^{k}a_jr_j
=
\frac{\eta}{\delta}
\left[
\lambda_{k+1}-\lambda_1
+
(t_{k+1}-1)(\lambda_{k+1}-\lambda_k)
\right]
+r_1,
\end{align}
where
\[
    a_k
    =
    \frac{\eta t_{k+1}-(t_{k+1}^2-t_k^2)}{t_k^2}.
\]
Since $t_{k+1}^2-t_k^2\le\rho t_{k+1}$ and $\rho<\eta$, we have
$
    a_k
    \ge
    \frac{(\eta-\rho)t_{k+1}}{t_k^2}
    \ge
    \frac{\eta-\rho}{t_k}
    >0.
$
Moreover, $a_k<1$. Hence
\[
    \sum_{k=1}^{+\infty}a_k=+\infty.
\]
From \eqref{eq_oL}, we have $t_{k+1}\|\lambda_{k+1}-\lambda_k\|\to 0$. Together with $\lambda_k\to \lambda^*$ and \eqref{eq_newG} implies
\[\lim_{k\to+\infty}\left[r_{k+1}
+\sum_{j=1}^{k}a_jr_j\right] =\lambda^*-\lambda_1+Ax_1-b\in\mathbb R^m.\]
Then, it follows from  \eqref{eq_ainf} and  Lemma \ref{lem_F_res} that
\[
    \lim_{k\to+\infty}t_k^2\|Ax_k-b\|= \lim_{k\to+\infty}\|r_k\|=0.
\]
That is,
\[
    \|Ax_k-b\|=o\left(\frac1{t_k^2}\right).
\]
This together with \eqref{eq_oL} implies
\[
\begin{aligned}
|f(x_k)-f(x^*)|
&\le
 \mathcal L_\beta(x_k,\lambda^*)
    -
    \mathcal L_\beta(x^*,\lambda^*)
+\|\lambda^*\|\,\|Ax_k-b\|
+\frac{\beta}{2}\|Ax_k-b\|^2
 = o\left(\frac1{t_k^2}\right).
\end{aligned}
\]
\end{proof}

\begin{remark}\label{rem:o-rates-comparison}
The $\bigO(1/t_k^2)$ estimates of the feasibility violation and  the objective residual for accelerated ALM and Lagrangian-based methods have been obtained  in the literature. See e.g. \cite{HeNA,BotMP}.  We  improve in Theorem \ref{thm_o_rates} the standard $\bigO(1/t_k^2)$ rates to $o(1/t_k^2)$ rates for Algorithm \ref{al_main}. When
\[
    t_{k+1}^2-t_k^2=\rho t_{k+1},
\]
by Remark \ref{re_tk}  $t_k$ has the same order as $k$, and hence the
above estimates become $o(1/k^2)$. To the best of our knowledge, this is the
first $o(1/k^2)$ rate result of the feasibility violation and the  objective residual
for a Nesterov accelerated augmented Lagrangian-type method in the merely convex setting.
\end{remark}

We finally derive the stationarity residual estimate in the smooth case. This
uses the additional gradient information available in Case II together with the
little-o estimates obtained in Theorem \ref{thm_o_rates}.

\begin{theorem}\label{thm_saddleo}
Let $\{(x_k,\lambda_k)\}_{k\ge0}$ be generated by Algorithm \ref{al_main} in
Case II with 
$\rho\in(0,1)$ and $\eta\in(\rho,1)$. Let
$z^*=(x^*,\lambda^*)\in\Omega$ be the limit point of
$\{(x_k,\lambda_k)\}_{k\ge0}$. Then
\[
    \|\nabla f(x_k)+A^\top\lambda_k\|
    =
    o\left(\frac1{t_k}\right),
    \qquad
    \sum_{k=1}^{+\infty}
    t_{k+1}\|\nabla f(x_k)+A^\top\lambda_k\|^2<+\infty.
\]
\end{theorem}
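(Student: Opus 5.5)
Our plan is to express the stationarity residual through the primal equation of \eqref{alg_diff} and then control each term with the little-o and summability estimates already proved. In Case II we have $g_{k+1}=\nabla f(\bar x_k)$. Writing $\lambda_{k+1}+\alpha_k(\lambda_{k+1}-\lambda_k)=\lambda_{k+1}+\alpha_k(\lambda_{k+1}-\lambda_k)$, the primal equation gives
\[
\nabla f(x_{k+1})+A^\top\lambda_{k+1}
=
\bigl(\nabla f(x_{k+1})-\nabla f(\bar x_k)\bigr)
-\frac1\gamma(x_{k+1}-\bar x_k)
-\beta A^\top(Ax_{k+1}-b)
-\alpha_kA^\top(\lambda_{k+1}-\lambda_k).
\]
Using $L$-smoothness, we obtain
\[
\|\nabla f(x_{k+1})+A^\top\lambda_{k+1}\|
\le \Bigl(L+\frac1\gamma\Bigr)\|x_{k+1}-\bar x_k\|
+\beta\|A\|\,\|Ax_{k+1}-b\|
+\alpha_k\|A\|\,\|\lambda_{k+1}-\lambda_k\|.
\]

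\textbf{The $o(1/t_k)$ bound.} We estimate each term using Theorem \ref{thm_o_rates}.
\begin{itemize}
\item For the first term, $\|x_{k+1}-\bar x_k\|\le\|x_{k+1}-x_k\|+\frac{t_k-1}{t_{k+1}}\|x_k-x_{k-1}\|$. Each piece is $o(1/t_k)$ because $\|z_k-z_{k-1}\|=o(1/t_k)$ and $t_{k+1}\ge t_k$.
\item The feasibility term satisfies $\|Ax_{k+1}-b\|=o(1/t_{k+1}^2)$.
\end{itemize}
The delicate term is the third one, $\alpha_k\|\lambda_{k+1}-\lambda_k\|$. Since $\alpha_k\le t_{k+1}/\eta$, it is only bounded by $t_{k+1}\cdot o(1/t_{k+1})=o(1)$. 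This is not enough, so I will not use this crude bound.

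Instead, I will rewrite the dual correction via the dual equation. By \eqref{eq_Alam}, $A^\top\lambda_{k+1}-A^\top\lambda_k$ involves $\frac{\eta}{t_{k+1}}\bigl(q_k-(A^\top\lambda_k-A^\top\lambda^*)\bigr)$. So the better route is to work directly with \eqref{eq_Alam}. Set $u_k:=\nabla f(x_k)+A^\top\lambda_k$. Then \eqref{eq_Alam} reads
\[
u_{k+1}=\Bigl(1-\frac{\eta}{t_{k+1}}\Bigr)(A^\top\lambda_k-A^\top\lambda^*)+\frac{\eta}{t_{k+1}}q_k+\bigl(\nabla f(x_{k+1})-\nabla f(x^*)\bigr).
\]
This is awkward because $\nabla f(x_{k+1})-\nabla f(x^*)$ need not be small at rate $1/t_k$.

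A cleaner alternative is to multiply \eqref{eq_Alam} by $t_{k+1}$ and form the recursion
\[
t_{k+1}u_{k+1}=(t_{k+1}-\eta)u_k+\eta\,\tilde q_k.
\]
Here $\tilde q_k$ collects $t_{k+1}\bigl(\nabla f(x_{k+1})-\nabla f(x_k)\bigr)+(t_{k+1}-\eta)\cdot 0$ together with the terms of $q_k$ and the gradient mismatch. Each of these has size $t_{k+1}\|x_{k+1}-x_k\|$, $t_{k+1}\|x_{k+1}-\bar x_k\|$, or $t_{k+1}\|Ax_{k+1}-b\|$, and all of them tend to $0$ by Theorem \ref{thm_o_rates}. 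Setting $v_k:=t_ku_k$, this gives
\[
\|v_{k+1}\|\le\Bigl(1-\frac{\eta}{t_{k+1}}\Bigr)\frac{t_{k+1}}{t_k}\|v_k\|+\eta\|\tilde q_k\|.
\]
We have $\frac{t_{k+1}-\eta}{t_k}\le1-\frac{\eta-\rho}{t_{k+1}}$ up to a harmless factor, using $t_{k+1}^2-t_k^2\le\rho t_{k+1}$. Equivalently, $t_{k+1}(t_{k+1}-\eta)\le t_k^2-(\eta-\rho)t_{k+1}$, which is exactly $1-a_k$ from \eqref{eq_ak0}. So the contraction factor is about $1-(\eta-\rho)/t_{k+1}$.

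To fit Lemma \ref{le_bertsekas}, I will write the forcing as $\frac{\eta-\rho}{t_{k+1}}\cdot\frac{t_{k+1}\eta}{\eta-\rho}\|\tilde q_k\|$. The forcing is then $O\bigl(t_{k+1}\|\tilde q_k\|\bigr)$. This requires $t_{k+1}\|\tilde q_k\|\to0$, meaning the components must be $o(1/t_k^2)$. That holds for the feasibility term but not obviously for $t_{k+1}\|x_{k+1}-x_k\|\cdot t_{k+1}$.

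\textbf{Main obstacle.} Getting the rate requires extra decay of $\|x_{k+1}-\bar x_k\|$. I expect to obtain it from the dropped term $-\frac{t_{k+1}^2}{2}\|z_{k+1}-\bar z_k\|_M^2$ in Lemma \ref{lem_energys}, sharpened by $\gamma\le1/L$ (strictly, or with half of it kept). That term gives
\[
\sum_k t_{k+1}^2\|x_{k+1}-\bar x_k\|^2<\infty,
\]
and also $\sum_k t_k\|z_k-z_{k-1}\|^2<\infty$. I will then switch to the squared-norm version of the recursion, where the forcing only needs summability, not convergence to zero. Concretely, squaring, using Young's inequality, and summing with weights should give two things: $\|v_k\|^2$ has a limit, and $\sum_k\frac{1}{t_k}\|v_k\|^2<\infty$. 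As in the proof of Theorem \ref{thm_o_rates}, combined with $\sum1/t_k=\infty$, this forces the limit to be $0$, giving $\|u_k\|=o(1/t_k)$. The same summed inequality yields $\sum t_{k+1}\|u_k\|^2<\infty$. I expect verifying that the forcing terms are summable in this squared form to be the hardest step.
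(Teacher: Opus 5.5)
\textbf{There is a genuine gap, at exactly the step you flagged as hardest.} Your recursion is $v_{k+1}=\frac{t_{k+1}-\eta}{t_k}v_k+\eta\tilde q_k$ with $v_k=t_k(\nabla f(x_k)+A^\top\lambda_k)$. Its forcing contains $t_{k+1}\bigl(\nabla f(x_{k+1})-\nabla f(x_k)\bigr)$, which the available estimates only make $o(1)$.

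Lemma \ref{le_bertsekas} with $\sigma_k\sim(\eta-\rho)/t_k$ needs $t_k\|\tilde q_k\|\to0$. The squared version needs $\sum_k t_k\|\tilde q_k\|^2<\infty$, which essentially means $\sum_k t_k^3\|\nabla f(x_{k+1})-\nabla f(x_k)\|^2<\infty$. The energy estimates do not give this:
\begin{enumerate}
\item[(a)] You only have $\sum_k t_k\|z_{k+1}-z_k\|^2<\infty$ and $\|z_{k+1}-z_k\|=o(1/t_k)$.
\item[(b)] Your extra bound $\sum_k t_{k+1}^2\|x_{k+1}-\bar x_k\|^2<\infty$ concerns $x_{k+1}-\bar x_k$ rather than $x_{k+1}-x_k$, and carries one power of $t_k$ too few.
\item[(c)] That extra bound also needs $\gamma<1/L$ strictly. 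The theorem allows $\gamma=1/L$, and then the dropped term is entirely consumed by $\frac{Lt_{k+1}^2}{2}\|x_{k+1}-\bar x_k\|^2$.
\end{enumerate}

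The gradient part of your recursion is in fact circular. Take $A=0$, $b=0$. Then $\lambda_k\equiv\lambda_1$ and $-\frac1\gamma(x_{k+1}-\bar x_k)=\nabla f(\bar x_k)$, so your identity collapses to $t_{k+1}u_{k+1}-(t_{k+1}-\eta)u_k=t_{k+1}(u_{k+1}-u_k)+\eta u_k$. This says nothing about $u_k$. So the $o(1/t_k)$ decay of the gradient part must come from outside the recursion.

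\textbf{The missing ingredient is the one you dismissed:} $\nabla f(x_k)-\nabla f(x^*)$ \emph{is} $o(1/t_k)$. Since $f$ is convex and $L$-smooth, $\nabla f(x^*)=-A^\top\lambda^*$ and $Ax^*=b$, you have
\[
\frac1{2L}\|\nabla f(x_k)-\nabla f(x^*)\|^2\le f(x_k)-f(x^*)-\langle\nabla f(x^*),x_k-x^*\rangle\le\mathcal L_\beta(x_k,\lambda^*)-\mathcal L_\beta(x^*,\lambda^*).
\]
Theorem \ref{thm_o_rates} then gives $\|\nabla f(x_k)-\nabla f(x^*)\|=o(1/t_k)$. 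Theorem \ref{thm_Orates}(i) gives $\sum_k t_{k+1}\|\nabla f(x_k)-\nabla f(x^*)\|^2<\infty$.

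The paper then runs the recursion only on the multiplier part $y_k:=t_k(A^\top\lambda_k-A^\top\lambda^*)$. By \eqref{eq_Alam}, $y_{k+1}=\frac{t_{k+1}-\eta}{t_k}y_k+\eta q_k$, with $q_k$ as in \eqref{eq_qk_s}. Every term of $q_k$ is first order in $x_{k+1}-\bar x_k$, $\nabla f(x_{k+1})-\nabla f(x^*)$ and $Ax_{k+1}-b$. Hence $t_k\|q_k\|\to0$ and $\sum_k t_k\|q_k\|^2<\infty$, with no extra decay and no strict stepsize condition.

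Lemma \ref{le_bertsekas} with $\sigma_k=1-\frac{t_{k+1}-\eta}{t_k}\ge\frac{\eta-\rho}{t_k}$ gives $y_k\to0$. The Young-inequality version of the same recursion gives the weighted square summability.

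Your first decomposition, via \eqref{eq_Alam} plus $\nabla f(x_{k+1})-\nabla f(x^*)$, was the right one. It only lacked this cocoercivity bound.
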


\begin{proof}
We first estimate the gradient difference. Since $f$ is convex and $L$-smooth,
\[
    \frac1{2L}\|\nabla f(x_k)-\nabla f(x^*)\|^2
    \le
    f(x_k)-f(x^*)
    -
    \langle \nabla f(x^*),x_k-x^*\rangle .
\]
Using $\nabla f(x^*)+A^\top\lambda^*=0$ and $Ax^*=b$, we get
\[
\begin{aligned}
& f(x_k)-f(x^*)-\langle \nabla f(x^*),x_k-x^*\rangle
=f(x_k)-f(x^*)+\langle \lambda^*,Ax_k-b\rangle       \\
&\qquad= (\mathcal L_\beta(x_k,\lambda^*) -\mathcal L_\beta(x^*,\lambda^*))-\frac{\beta}{2}\|Ax_k-b\|^2\\
&\qquad\le\mathcal L_\beta(x_k,\lambda^*) -\mathcal L_\beta(x^*,\lambda^*).
\end{aligned}
\]
Then, by Theorem \ref{thm_Orates}(i) and Theorem \ref{thm_o_rates}, we obtain
\begin{equation}\label{eq_diff_nf}
    \|\nabla f(x_k)-\nabla f(x^*)\|
    =
    o\left(\frac1{t_k}\right),
    \qquad
    \sum_{k=1}^{+\infty}
    t_{k+1}\|\nabla f(x_k)-\nabla f(x^*)\|^2<+\infty .
\end{equation}

It remains to estimate $A^\top\lambda_k-A^\top\lambda^*$. Set
\[
    y_k:=t_k(A^\top\lambda_k-A^\top\lambda^*).
\]
It follows from \eqref{eq_Alam} and \eqref{eq_qk_s} that
\begin{equation}\label{eq_y_new}
    y_{k+1}
    =
    \frac{t_{k+1}-\eta}{t_k}y_k
    +
    \eta q_k,
\end{equation}
where, in Case II,
\[
    q_k
    :=
    -\left(\nabla f(\bar x_k)-\nabla f(x^*)\right)
    -\beta A^\top(Ax_{k+1}-b)
    -\frac1\gamma(x_{k+1}-\bar x_k).
\]
Since
\[
    \|x_{k+1}-\bar x_k\|
    \le
    \|x_{k+1}-x_k\|
    +
    \frac{t_k-1}{t_{k+1}}\|x_k-x_{k-1}\|,
\]
and
\[
\|\nabla f(\bar x_k)-\nabla f(x^*)\|
\le
L\|x_{k+1}-\bar x_k\|
+
\|\nabla f(x_{k+1})-\nabla f(x^*)\|,
\]
it follows from  \eqref{eq_diff_nf} and Theorem \ref{thm_o_rates}  that
\begin{equation}\label{eq_tq}
    t_k\|q_k\|\to0,
    \qquad
    \sum_{k=1}^{+\infty} t_k\|q_k\|^2<+\infty.
\end{equation}

We first prove the little-o estimate. Define
\[
    \sigma_k
    :=
    1-\frac{t_{k+1}-\eta}{t_k}
    =
    \frac{\eta-(t_{k+1}-t_k)}{t_k}.
\]
Since
\[
    t_{k+1}-t_k
    =
    \frac{t_{k+1}^2-t_k^2}{t_{k+1}+t_k}
    \le
    \rho\frac{t_{k+1}}{t_{k+1}+t_k}
    <\rho<\eta,
\]
we have $0<\sigma_k\le1$. Moreover,
\[
    \sigma_k\ge \frac{\eta-\rho}{t_k},
\]
and hence $\sum_{k=1}^{+\infty}\sigma_k=+\infty$. From \eqref{eq_y_new},
\[
    \|y_{k+1}\|
    \le
    (1-\sigma_k)\|y_k\|+\eta\sigma_k\frac{\|q_k\|}{\sigma_k}.
\]
By \eqref{eq_tq},
\[
    \frac{\|q_k\|}{\sigma_k}
    \le
    \frac{t_k\|q_k\|}{\eta-\rho}
    \to0.
\]
Applying Lemma \ref{le_bertsekas}, we obtain $\|y_k\|\to0$. Hence
\[
    \|A^\top\lambda_k-A^\top\lambda^*\|
    =
    o\left(\frac1{t_k}\right).
\]
Together with \eqref{eq_diff_nf} and the KKT condition
$\nabla f(x^*)+A^\top\lambda^*=0$, this gives
\[
\begin{aligned}
\|\nabla f(x_k)+A^\top\lambda_k\|
&\le
\|\nabla f(x_k)-\nabla f(x^*)\|
+
\|A^\top\lambda_k-A^\top\lambda^*\|
\\
&=
o\left(\frac1{t_k}\right).
\end{aligned}
\]

It remains to prove the weighted square summability. By Young's inequality and
\eqref{eq_y_new}, for any $\varepsilon_k>0$,
\[
\begin{aligned}
\|y_{k+1}\|^2
&=
\left\|
\frac{t_{k+1}-\eta}{t_k}y_k+\eta q_k
\right\|^2
\\
&\le
\left(\frac{t_{k+1}-\eta}{t_k}\right)^2\|y_k\|^2
+
\varepsilon_k\|y_k\|^2
+
\left(
\eta^2+
\frac{\eta^2(t_{k+1}-\eta)^2}{\varepsilon_k t_k^2}
\right)\|q_k\|^2 .
\end{aligned}
\]
Moreover,
\[
\begin{aligned}
t_k^2-(t_{k+1}-\eta)^2
&=
t_k^2-t_{k+1}^2+2\eta t_{k+1}-\eta^2
\\
&\ge
(2\eta-\rho)t_{k+1}-\eta^2
\\
&\ge
(\eta(2-\eta)-\rho)t_{k+1}.
\end{aligned}
\]
Set $c_0:=\eta(2-\eta)-\rho>0$ and choose
$\varepsilon_k:=c_0t_{k+1}/(2t_k^2)$. Then there exists a constant $C>0$ such
that
\[
    \|y_{k+1}\|^2
    \le
    \|y_k\|^2
    -
    \frac{c_0t_{k+1}}{2t_k^2}\|y_k\|^2
    +
    Ct_k\|q_k\|^2.
\]
Summing this inequality and using \eqref{eq_tq}, we get
\[
    \sum_{k=1}^{+\infty}
    \frac{t_{k+1}}{t_k^2}\|y_k\|^2
    <+\infty.
\]
Since $y_k=t_k(A^\top\lambda_k-A^\top\lambda^*)$, it follows that
\[
    \sum_{k=1}^{+\infty}
    t_{k+1}\|A^\top\lambda_k-A^\top\lambda^*\|^2
    <+\infty.
\]
Finally, using $\nabla f(x^*)+A^\top\lambda^*=0$,
\[
\begin{aligned}
\|\nabla f(x_k)+A^\top\lambda_k\|^2
&\le
2\|\nabla f(x_k)-\nabla f(x^*)\|^2
+
2\|A^\top\lambda_k-A^\top\lambda^*\|^2 .
\end{aligned}
\]
Multiplying by $t_{k+1}$ and summing over $k$, and using \eqref{eq_diff_nf}, we
obtain
\[
    \sum_{k=1}^{+\infty}
    t_{k+1}\|\nabla f(x_k)+A^\top\lambda_k\|^2<+\infty.
\]
The proof is complete.
\end{proof}

\begin{remark}
There are only few results on stationarity residual bounds  for accelerated ALMs  in the literature. Bo{\c t},  Csetnek, and Nguyen \cite{BotMP} derived the following  estimate for their fast ALM
\[
    \|\nabla f(x_k)+A^\top\lambda_k\|
    =
    o\left(\frac1{\sqrt{t_k}}\right).
\]
As a comparison, we establish in Theorem \ref{thm_saddleo} a sharper rate $o(1/t_k)$ for the stationarity residual under the same setting.
\end{remark}

 \subsection{Further discussions}

We now discuss several variants that can be obtained from the same discrete
energy mechanism.

\medskip
\noindent
{\bf Composite differentiable objectives and nonsmooth convex terms.}
The preceding arguments also apply, with only minor notational changes, to the
composite differentiable objective
\[
    f=f_1+f_2,
\]
where $f_1$ is convex and $C^1$, and $f_2$ is convex with an $L$-Lipschitz
continuous gradient. In this case, the gradient term in \eqref{alg_diff} is
replaced by the mixed implicit-explicit choice
\[
    g_{k+1}:=\nabla f_1(x_{k+1})+\nabla f_2(\bar x_k).
\]
The implicit part $\nabla f_1(x_{k+1})$ is treated by convexity at
$x_{k+1}$, while the explicit part $\nabla f_2(\bar x_k)$ is controlled by the
smooth inequality in Lemma \ref{le_eqsmf}. Hence the proof of Lemma
\ref{lem_energys} remains valid with the same Lyapunov sequence
\[
    \mathcal E_k^{z^*}=t_k^2H_k^{z^*}+B_k^{z^*}.
\]
Consequently, the convergence estimates, sequence convergence, and improved
rates in Theorems \ref{thm_Orates}, \ref{thm_cov}, and \ref{thm_o_rates} extend
to this mixed differentiable composite setting. The corresponding stationarity
condition becomes
\[
    \nabla f_1(x^*)+\nabla f_2(x^*)+A^\top\lambda^*=0.
\]

If $f_1$ is only closed, proper, and convex, the basic energy estimate in Lemma
\ref{lem_energys} can still be obtained by replacing
$\nabla f_1(x_{k+1})$ with some $g_{k+1}\in\partial f_1(x_{k+1})$ and using the
subgradient inequality. Thus the standard accelerated estimates in Theorem
\ref{thm_Orates}, for example
\[
    \|Ax_k-b\|=O\left(\frac1{t_k^2}\right),
    \qquad
    |f(x_k)-f(x^*)|=O\left(\frac1{t_k^2}\right),
\]
are still available. However, the proof of sequence convergence and the
improved $o(1/t_k^2)$ rates in Theorems \ref{thm_cov} and \ref{thm_o_rates}
uses Proposition \ref{prop_App1}, which relies on differentiability of the
objective and gives the constancy of $\nabla f(x^*)$ over the KKT set
$\Omega$. For a general nonsmooth term $f_1$, this property is no longer
available, since different solutions may admit different subgradients.
Therefore, without additional regularity assumptions, the present argument
yields only the basic $\bigO(1/t_k^2)$ rates in the nonsmooth composite case.

\medskip
\noindent
{\bf A scaled primal-dual variant.}
When $f$ is $C^1$ and the implicit choice $g_{k+1}=\nabla f(x_{k+1})$ is used,
the same energy argument also applies to a scaled primal-dual scheme. More
precisely, suppose that the primal and dual equations in \eqref{alg_diff} are
multiplied by the same positive scaling factor $\xi_{k+1}$. The coupled
relations become
\begin{equation}\label{eq_al_scal}
\begin{cases}
x_{k+1}-\bar x_k
=
-\gamma\xi_{k+1}\left[
\nabla f(x_{k+1})
+A^\top\bigl(\lambda_{k+1}
+\alpha_k(\lambda_{k+1}-\lambda_k)\bigr)
+\beta A^\top(Ax_{k+1}-b)
\right],
\\[1.2ex]
\lambda_{k+1}-\bar\lambda_k
=
\delta\xi_{k+1}\left[
Ax_{k+1}-b+\alpha_k A(x_{k+1}-x_k)
\right].
\end{cases}
\end{equation}
The use of the same scaling factor in the two equations is essential, because it
preserves the cancellation of the primal-dual coupling terms in the energy
estimate, and the scaling coefficient plays a crucial role in establishing the fast convergence properties \cite{AttouchSiamScal,AttouchJEMS,HeAutomatica}. Assume that $\xi_{k+1}\ge\xi_0>0$. The corresponding Lyapunov sequence
is
\[
    \mathcal E_k^{z^*}:=t_k^2\xi_k H_k^{z^*}+B_k^{z^*}.
\]
The only change in the energy proof is that the coefficient
$t_{k+1}^2-t_k^2$ is replaced by the scaled difference
\[
    t_{k+1}^2\xi_{k+1}-t_k^2\xi_k.
\]
We therefore impose the following scaled growth condition.

\begin{assumption}\label{ass_tks}
Let $\{t_k\}_{k\ge1}$ and $\{\xi_k\}_{k\ge1}$ be positive sequences such that
$t_1^2\xi_1=1$, $t_k^2\xi_k>1$ for all $k>2$, and $t_k^2\xi_k$ is
nondecreasing with $t_k^2\xi_k\to+\infty$. Assume that
\[
    t_{k+1}^2\xi_{k+1}-t_k^2\xi_k
    \le
    \rho t_{k+1}\xi_{k+1},
    \qquad
    0<\rho\le1.
\]
\end{assumption}

For $\eta\in[\rho,1]$, the same calculation gives
\[
    \mathcal E_{k+1}^{z^*}-\mathcal E_k^{z^*}
    \le
    (\rho-\eta)t_{k+1}\xi_{k+1}H_k^{z^*}
    -
    (1-\eta)\left(t_{k+1}-\frac12\right)
    \|z_{k+1}-z_k\|_M^2 .
\]
Consequently, the arguments used in Theorem \ref{thm_Orates} yield
\[
    \sum_{k=1}^{+\infty}t_{k+1}\xi_{k+1}H_k^{z^*}<+\infty,
    \qquad
    \sum_{k=1}^{+\infty}t_{k+1}\|z_{k+1}-z_k\|_M^2<+\infty,
\]
and
\[
    \|Ax_k-b\|=O\left(\frac1{t_k^2\xi_k}\right),
    \qquad
    |f(x_k)-f(x^*)|=O\left(\frac1{t_k^2\xi_k}\right).
\]
Moreover, Lemma \ref{lem_scaledtk} gives
$
    \sum_{k=1}^{+\infty}1/t_k=+\infty.
$
Thus the convergence of the iterative sequence satisfying \eqref{eq_al_scal}
can be proved by the same argument as in Theorem \ref{thm_cov}.

In the noncritical case $0<\rho<\eta<1$, the same energy-limit argument as in
Theorem \ref{thm_o_rates}, applied to
$\mathcal E_k^{z^*}=t_k^2\xi_k H_k^{z^*}+B_k^{z^*}$, yields
\[
    \mathcal L_\beta(x_k,\lambda^*)
    -
    \mathcal L_\beta(x^*,\lambda^*)
    =
    o\left(\frac1{t_k^2\xi_k}\right),
    \qquad
    \|(x_k,\lambda_k)-(x_{k-1},\lambda_{k-1})\|
    =
    o\left(\frac1{t_k}\right).
\]
The feasibility estimate is obtained by applying the same residual averaging
argument to the scaled residual
\[
    r_k:=t_k^2\xi_k(Ax_k-b).
\]
The corresponding coefficient becomes
\[
   a_k
    =
    1-
    \frac{\xi_{k+1}t_{k+1}(t_{k+1}-\eta)}
         {\xi_kt_k^2}.
\]
Using Assumption \ref{ass_tks}, we obtain
\[
    a_k
    =
    \frac{
    \eta t_{k+1}\xi_{k+1}
    -
    (t_{k+1}^2\xi_{k+1}-t_k^2\xi_k)}
    {t_k^2\xi_k}
    \ge
    \frac{(\eta-\rho)t_{k+1}\xi_{k+1}}{t_k^2\xi_k}.
\]
Hence the scaled residual averaging lemma applies and yields
\[
    t_k^2\xi_k(Ax_k-b)\to0.
\]
Consequently,
\[
    \|Ax_k-b\|
    =
    o\left(\frac1{t_k^2\xi_k}\right),
    \qquad
    |f(x_k)-f(x^*)|
    =
    o\left(\frac1{t_k^2\xi_k}\right).
\]
The unscaled algorithm studied in the previous sections corresponds to
$\xi_k\equiv1$.

 \section{Numerical experiments}\label{sec5}
 
 In this section, we present numerical experiments on a distributed logistic
regression problem to illustrate the convergence behavior of the proposed
algorithm.

We consider the following distributed optimization problem:
\begin{equation}\label{ques_X1}
\begin{split}
\min_{x\in\mathbb{R}^{pm}} \quad
& f(x)
=
\sum_{i=1}^{p}
\left[
\ln(1+e^{-c_i^\top x_i})
+\frac{\rho_i}{2}\|x_i\|^2
\right],
\\
\text{\rm s.t.}\quad
& x_i=x_j,\qquad \forall (i,j)\in\mathcal E .
\end{split}
\end{equation}
Here $\mathcal G(\mathcal V,\mathcal E,W)$ is an undirected communication
network with $p$ nodes, $x=[x_1^\top,x_2^\top,\ldots,x_p^\top]^\top\in
\mathbb R^{pm}$, $x_i\in\mathbb R^m$, $c_i\in\mathbb R^m$, and $\rho_i>0$ for
$i=1,2,\ldots,p$. In the experiments, the agents are connected by a ring network.
Then the consensus constraint $x_1=\cdots=x_p$ can be written as
\[
    L_{p\circ m}x=\mathbf 0_{pm},
\]
where $L_{p\circ m}=(I_p-W)\otimes I_m$. Hence \eqref{ques_X1} is a special case
of \eqref{prob_main} with $A=L_{p\circ m}$ and $b=\mathbf 0_{pm}$.

The objective function in \eqref{ques_X1} is convex and has a Lipschitz
continuous gradient. In particular, one may take
\[
    L=\max_{1\le i\le p}
    \left\{
    \frac{\|c_i\|^2}{4}+\rho_i
    \right\}.
\]
In our tests, each vector $c_i$ is generated from the uniform distribution on
$[0,1]^m$. We set $\rho_i=0.5$, $m=200$, and $p=10$.

We compare the proposed accelerated augmented Lagrangian method in Algorithm
\ref{al_main}, denoted by AALM, with the fast augmented Lagrangian
method in \cite[Algorithm 1]{BotMP}, denoted by FALM, and the accelerated
linearized primal-dual method in \cite[Algorithm 2]{HeNA}, denoted by ALPDM.
All three methods use inertial extrapolation for both the primal and dual
variables. For AALM and FALM, we choose the Chambolle-Dossal type parameter rule \cite{ChambolleJota}:
\[
    t_k=\frac{k+\alpha-2}{\alpha-1},
    \]
with $\alpha=10$, which is also the parameter choice used in ALPDM. The numerical results are reported in Figure \ref{fig1}. The proposed AALM shows
stable decay of the objective residual, feasibility violation, and stationarity
residual, which is consistent with the convergence properties proved in the
previous sections. Compared with FALM and ALPDM, AALM gives slightly better
performance in most of the tested quantities. Although the numerical difference
is moderate, the theoretical analysis developed in this paper
provides additional convergence information for AALM, including convergence of
the  primal-dual sequence and improved little-o rates in the noncritical
regime.

\begin{figure}[!htbp]
    \centering
    \begin{tabular}{ccc}
        \includegraphics[width=0.31\textwidth]{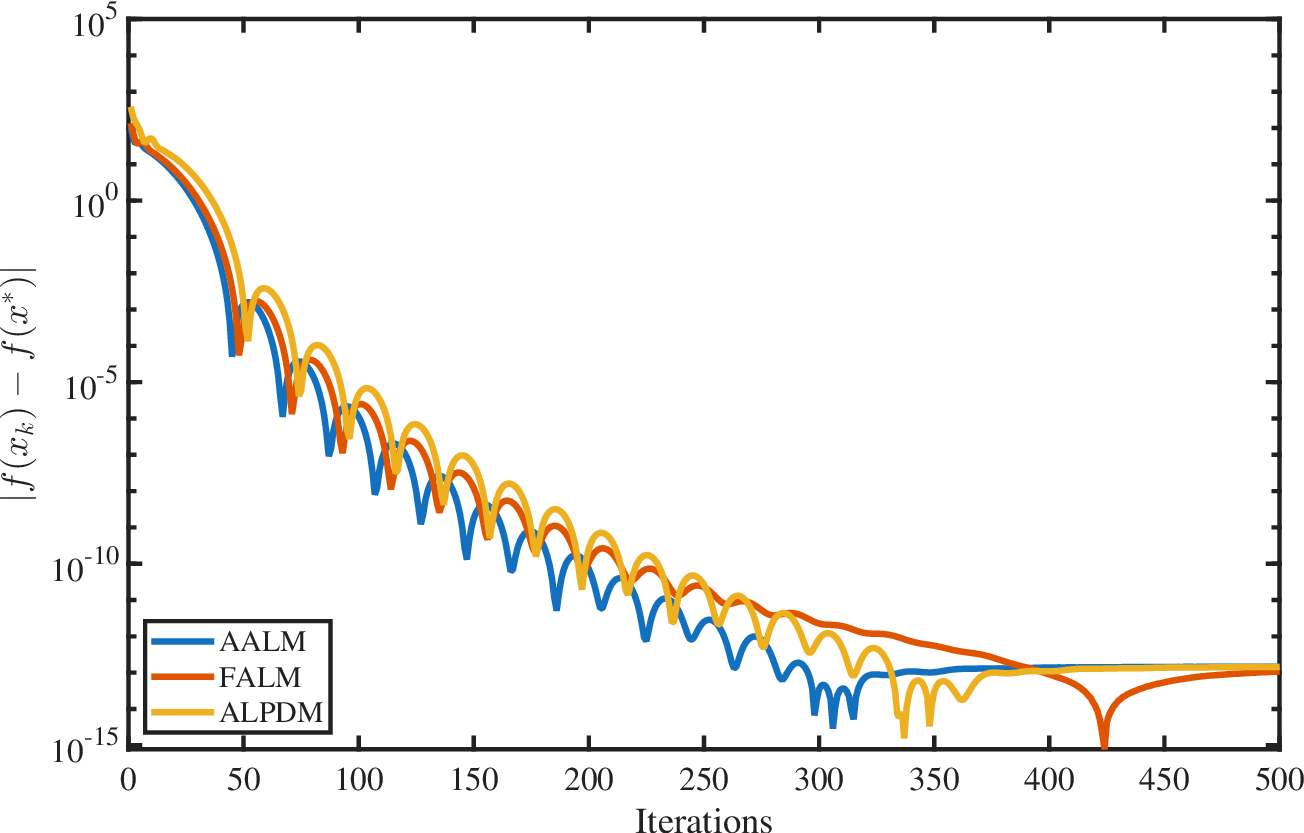} &
        \includegraphics[width=0.31\textwidth]{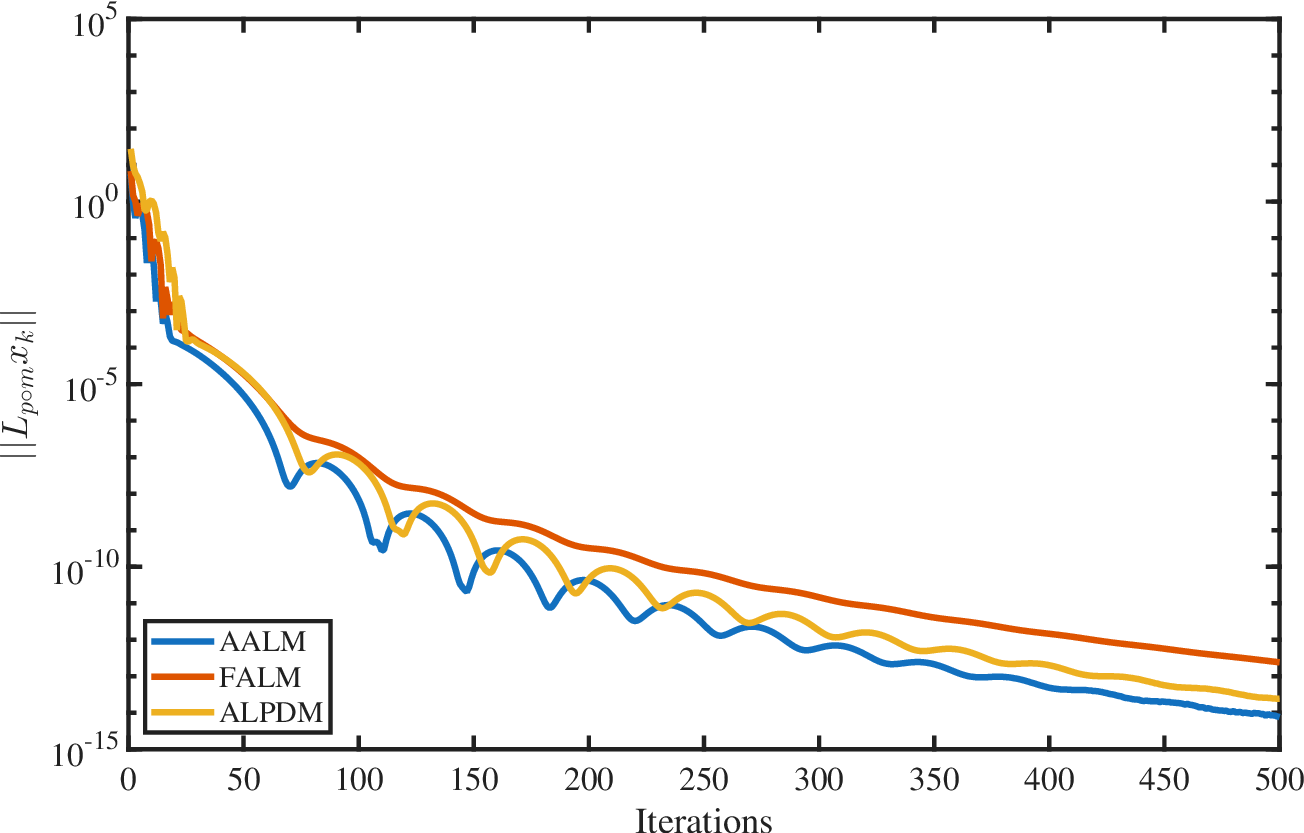} &
        \includegraphics[width=0.31\textwidth]{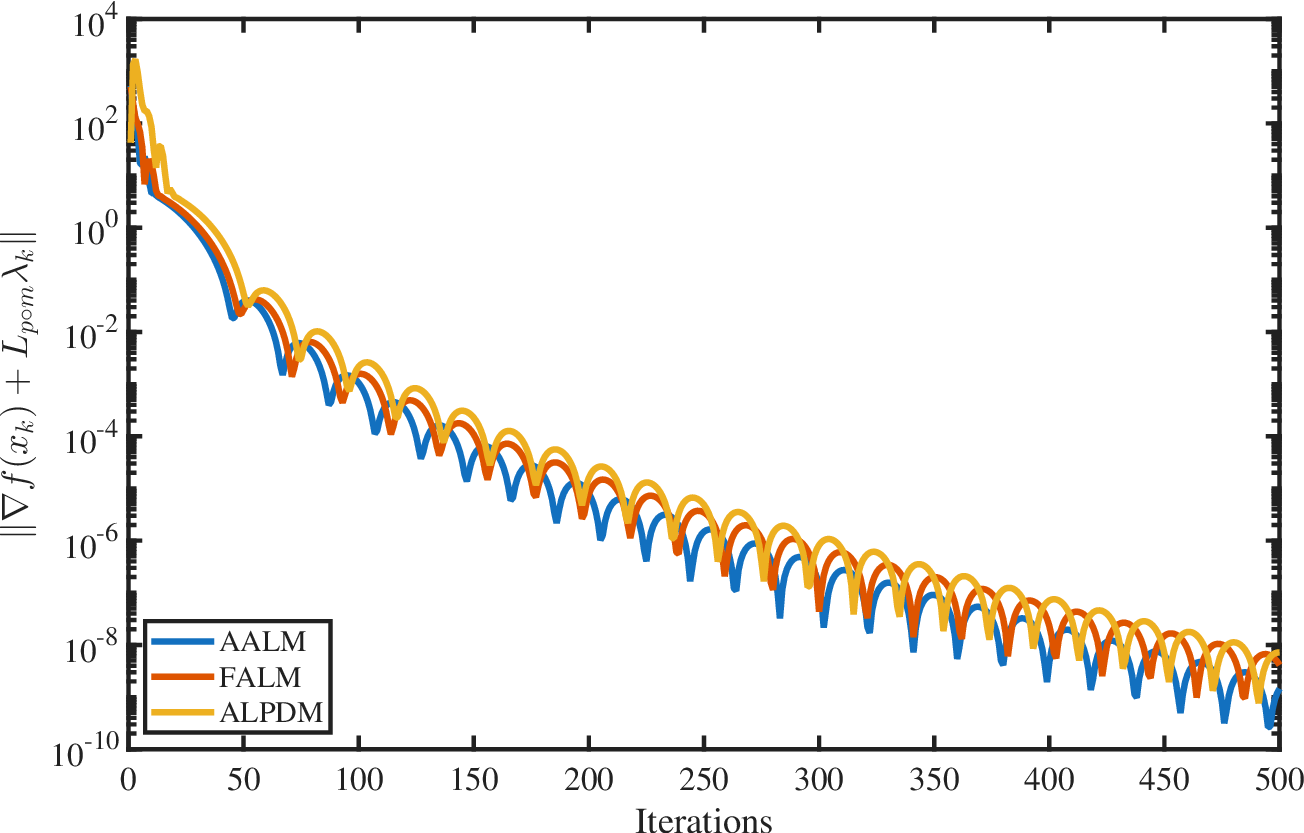} \\[0.8em]
        \includegraphics[width=0.31\textwidth]{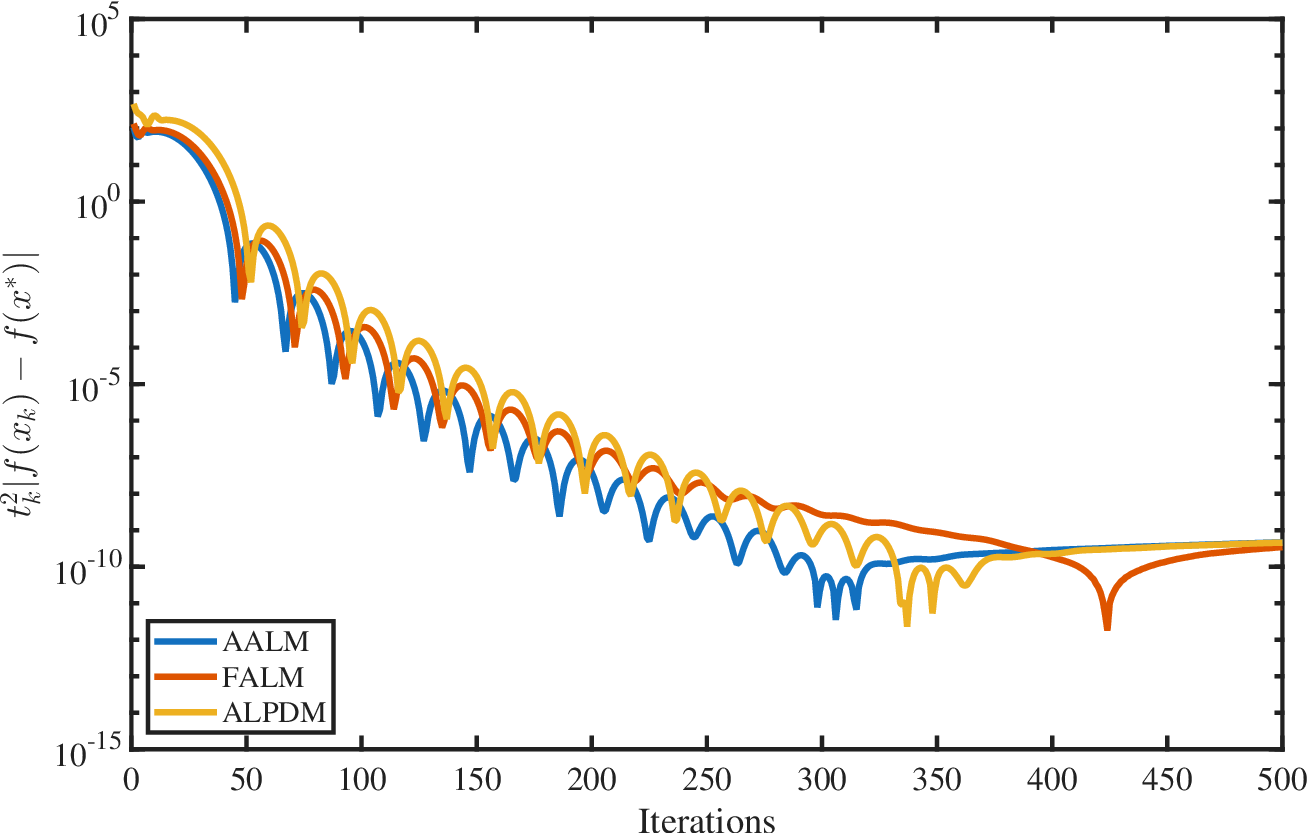} &
        \includegraphics[width=0.31\textwidth]{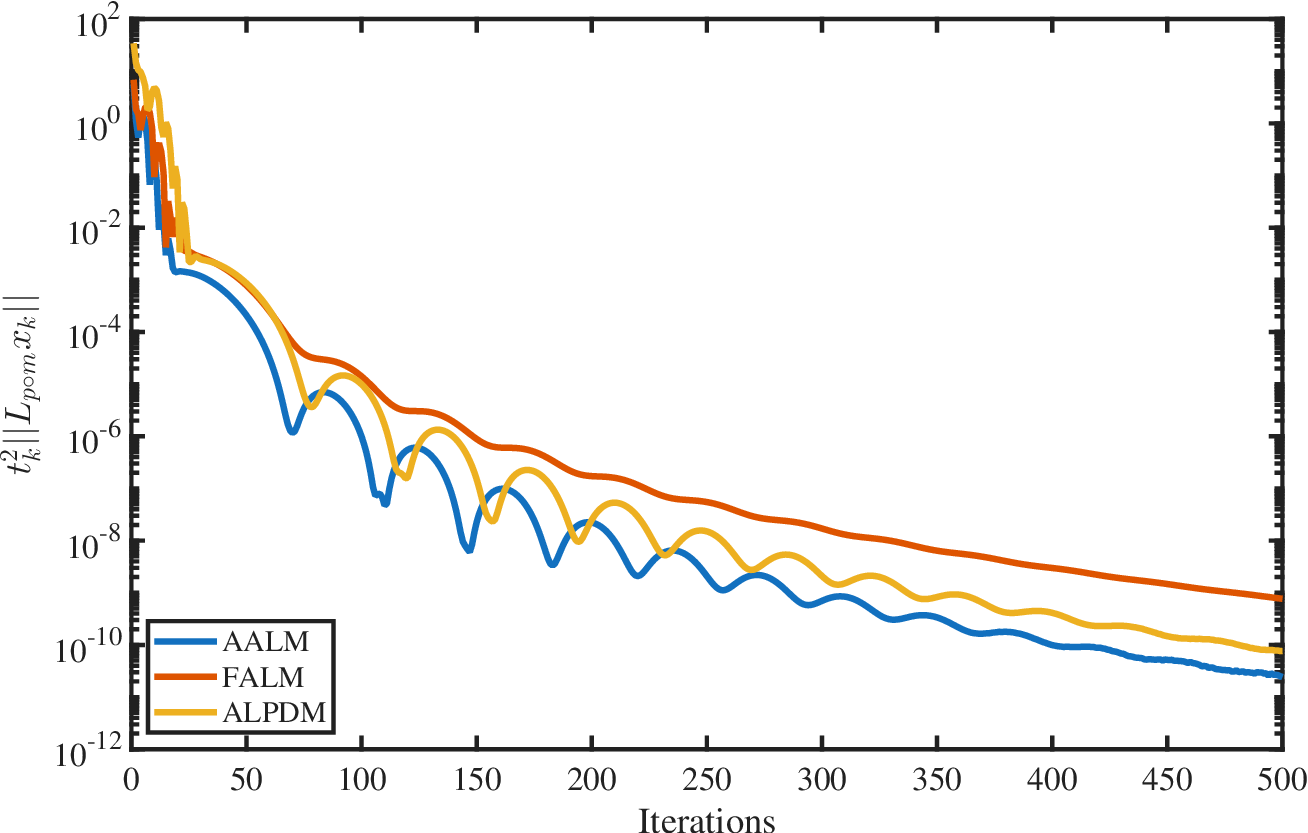} &
        \includegraphics[width=0.31\textwidth]{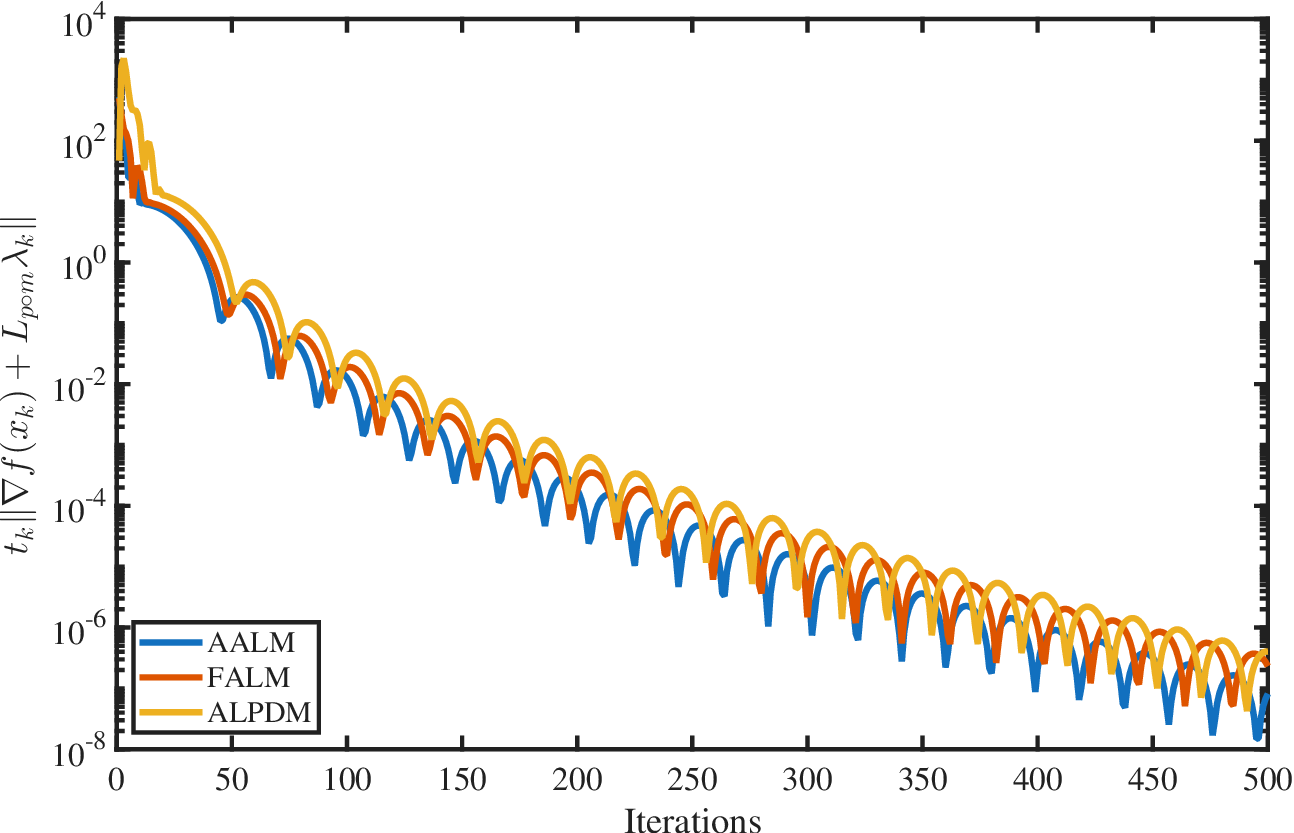}
    \end{tabular}
    \caption{Numerical comparison of AALM, FALM and ALPDM for solving
    \eqref{ques_X1}.}
    \label{fig1}
\end{figure}

  \section{Conclusion}\label{sec6}

In this paper, we proposed an accelerated augmented Lagrangian method for the
linearly constrained convex problem \eqref{prob_main}. The method applies to
convex $C^1$ objectives and also admits a partially explicit variant for convex
$L$-smooth objectives. Based on a unified discrete Lyapunov estimate, we proved accelerated convergence rates for the augmented Lagrangian gap,
feasibility violation and objective residual, and convergence of the  
primal-dual sequence. In the noncritical regime, the standard $\bigO(1/t_k^2)$
bounds were improved to $o(1/t_k^2)$ rates. In the smooth case, we further
obtained an $o(1/t_k)$ estimate and weighted square summability for the
stationarity residual. The numerical results are consistent with the established
theoretical convergence properties.

\appendix

\section{Auxiliary results}
\setcounter{equation}{0}
\renewcommand{\theequation}{A.\arabic{equation}}
\renewcommand{\thesection}{\Alph{section}}

In this appendix, we collect several auxiliary results used in the convergence
analysis of the proposed Algorithm. We first recall a basic property of the KKT set.

\begin{proposition}\cite[Proposition A.1]{HeArxiv}\label{prop_App1}
Consider the linearly constrained problem \eqref{prob_main}, where
$f:\R^n\to\R$ is convex and differentiable. Then, for any
$(x_1^*,\lambda_1^*), (x_2^*,\lambda_2^*)\in \Omega$, one has
\[
\nabla f(x_1^*)=\nabla f(x_2^*)\quad \text{and} \quad
A^\top\lambda_1^*=A^\top\lambda_2^*.
\]
\end{proposition}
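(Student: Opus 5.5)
The plan is to use only the KKT conditions and the convexity of $f$. Fix $(x_1^*,\lambda_1^*),(x_2^*,\lambda_2^*)\in\Omega$. From the definition of $\Omega$ we have $Ax_1^*=Ax_2^*=b$, and $\nabla f(x_i^*)=-A^\top\lambda_i^*$ for $i=1,2$.

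\textbf{Step 1: the gradients are orthogonal to the difference of the points.} Set $d:=x_1^*-x_2^*$. Then $Ad=0$. Hence
\[
\langle \nabla f(x_i^*),d\rangle=-\langle \lambda_i^*,Ad\rangle=0,
\qquad i=1,2 .
\]

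\textbf{Step 2: $f$ is affine on the segment.} By the saddle-point characterization \eqref{eq:saddle}, each $x_i^*$ minimizes $f$ on the affine set $\{x:Ax=b\}$. Therefore $f(x_1^*)=f(x_2^*)=:f^*$. By convexity, $f\equiv f^*$ on the segment $[x_2^*,x_1^*]$, since the segment lies in the feasible set and $f$ is bounded above by $f^*$ there.

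\textbf{Step 3: compare gradients through a Bregman distance.} Consider $\phi(x):=f(x)-f(x_1^*)-\langle \nabla f(x_1^*),x-x_1^*\rangle$. It is convex, nonnegative, and minimized at $x_1^*$. By Steps 1--2, $\phi(x_2^*)=0$, so $x_2^*$ is also a global minimizer of $\phi$. Because $\phi$ is differentiable, this gives $\nabla\phi(x_2^*)=0$, that is, $\nabla f(x_2^*)=\nabla f(x_1^*)$.

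Finally, $A^\top\lambda_1^*=-\nabla f(x_1^*)=-\nabla f(x_2^*)=A^\top\lambda_2^*$.

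\textbf{Main obstacle.} The only delicate step is Step 3, deducing equality of gradients rather than merely equality of function values. Note that no smoothness beyond differentiability is available. The remedy is the observation that a differentiable convex function has zero gradient at every global minimizer. Step 2 by itself is not needed beyond providing $\phi(x_2^*)=f(x_2^*)-f(x_1^*)-\langle\nabla f(x_1^*),x_2^*-x_1^*\rangle=0$, and this is immediate from $f(x_1^*)=f(x_2^*)$ together with Step 1.
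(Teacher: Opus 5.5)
Your proof is correct. The paper gives no proof of this proposition and simply imports it from \cite{HeArxiv}, so your argument supplies a self-contained justification rather than reproducing one.

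It helps to identify $\phi$. Since $\nabla f(x_1^*)=-A^\top\lambda_1^*$ and $Ax_1^*=b$, one has $\phi(x)=f(x)+\langle\lambda_1^*,Ax-b\rangle-f(x_1^*)$, which is the Lagrangian gap with multiplier $\lambda_1^*$. Step 3 therefore shows that $x_2^*$ also minimizes $\mathcal L_0(\cdot,\lambda_1^*)$, i.e.\ $(x_2^*,\lambda_1^*)\in\Omega$. This is the classical product structure of the saddle-point set.

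Your Fermat-rule step is what lets the argument work without a Lipschitz assumption. In the $L$-smooth case you could instead use $\frac{1}{2L}\|\nabla f(x_2^*)-\nabla f(x_1^*)\|^2\le\phi(x_2^*)=0$, the same inequality the paper uses in the proof of Theorem \ref{thm_saddleo}. However, the paper invokes this proposition in Lemma \ref{lem_saddle} and Theorem \ref{thm_cov} also for Case I, where $\nabla f$ need not be Lipschitz.

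Your argument also proves slightly more than stated. The multiplier $\lambda_2^*$ enters only through $f(x_2^*)=f(x_1^*)$. Hence any feasible $\bar x$ with $f(\bar x)=f(x_1^*)$ satisfies $(\bar x,\lambda_1^*)\in\Omega$. This is exactly the existence of a multiplier for a primal solution that Step 1 of Lemma \ref{lem_saddle} asserts without proof.

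Two simplifications are available. The segment claim in Step 2 is never used. The saddle-point inequality can also be avoided for the statement as given, since the gradient inequality and Step 1 already give $f(x_2^*)\ge f(x_1^*)+\langle\nabla f(x_1^*),x_2^*-x_1^*\rangle=f(x_1^*)$, and symmetrically the reverse inequality.
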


The next estimate is a simple consequence of the descent lemma \cite[Theorem 2.1.5]{Nesterov2004}. We shall use it to handle the explicit
gradient evaluation in the smooth case.

\begin{lemma}\cite[Lemma 3]{HeCnsnsPD}\label{le_eqsmf}
If $f:\mathbb{R}^{n}\to\mathbb{R}$ is convex and has a Lipschitz continuous
gradient with constant $L$, then, for any $x,y,z\in\mathbb{R}^{n}$, one has
\begin{eqnarray*}
   \langle \nabla f(z), x-y\rangle\geq
   f(x)-f(y)-\frac{L}{2}\|x-z\|^2.
\end{eqnarray*}
\end{lemma}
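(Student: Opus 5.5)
The plan is to combine the descent lemma with the convexity inequality, passing through the auxiliary point $z$. First I would invoke the descent lemma \cite[Theorem 2.1.5]{Nesterov2004} for $L$-smooth $f$, written at the base point $z$ and evaluated at $x$:
\[
    f(x)\le f(z)+\langle \nabla f(z),x-z\rangle+\frac{L}{2}\|x-z\|^2 .
\]
Next I would use convexity of $f$ at the base point $z$, evaluated at $y$:
\[
    f(y)\ge f(z)+\langle \nabla f(z),y-z\rangle,
    \qquad\text{that is,}\qquad
    -f(y)\le -f(z)-\langle \nabla f(z),y-z\rangle .
\]

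Adding the two inequalities, the terms $f(z)$ cancel. The inner products combine linearly as
\[
\langle \nabla f(z),x-z\rangle-\langle \nabla f(z),y-z\rangle=\langle \nabla f(z),x-y\rangle .
\]
This yields
\[
    f(x)-f(y)\le \langle \nabla f(z),x-y\rangle+\frac{L}{2}\|x-z\|^2,
\]
which is exactly the claimed inequality after rearranging.

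There is no real obstacle here. The only point that needs care is choosing which two inequalities to combine so that the quadratic term lands on $\|x-z\|^2$ and not on $\|y-z\|^2$. This requires applying the descent lemma to the pair $(z,x)$ and plain convexity, with no quadratic term, to the pair $(z,y)$.
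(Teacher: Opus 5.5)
Your proof is correct: the descent lemma at base point $z$ evaluated at $x$, combined with the convexity inequality at $z$ evaluated at $y$, gives exactly the claimed bound. This is essentially the argument the paper intends, since it gives no proof of its own and simply cites the lemma as a direct consequence of the descent lemma \cite[Theorem 2.1.5]{Nesterov2004}.
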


We next give a boundedness lemma for inertial sequences.  
\begin{lemma}\label{le_wbound}
Let $M\in\mathbb R^{N\times N}$ be symmetric positive definite, and let
$\eta>0$. Let $\{a_k\}_{k\ge1}$ be a nonnegative sequence, and let
$\{z_k\}_{k\ge 0}$ be a sequence in $\mathbb R^N$ with $z_1=z_0$.
Fix $z^*\in \mathbb R^N$ and define
\[
    w_k:=\eta(z_k-z^*)+a_k(z_k-z_{k-1}),
    \qquad k\ge1.
\]
Assume that $W:=\sup_{k\ge1}\|w_k\|_M<+\infty$. Then
\[
    \|z_k-z^*\|_M\le \frac{W}{\eta},\quad
    {a_k}\|z_k-z_{k-1}\|_M\le{2W},
    \qquad \forall k\ge 1.
\]
\end{lemma}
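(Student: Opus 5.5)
The idea is to rewrite $w_k$ as a linear recursion for $u_k:=z_k-z^*$ and show that, once $a_k\ge 0$, each $u_k$ is a convex combination of $u_{k-1}$ and $w_k/\eta$. Then a bound on $\|w_k\|_M$ propagates to $\|u_k\|_M$ by induction. From the definition,
\[
w_k=(\eta+a_k)u_k-a_ku_{k-1},
\qquad\text{so}\qquad
u_k=\frac{a_k}{\eta+a_k}\,u_{k-1}+\frac{\eta}{\eta+a_k}\cdot\frac{w_k}{\eta}.
\]
Since $a_k\ge0$ and $\eta>0$, the two weights are nonnegative and sum to one. This is well defined because $\eta+a_k\ge\eta>0$.

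\textbf{Step 1 (bound on $\|z_k-z^*\|_M$).} I would argue by induction on $k\ge1$, using the triangle inequality in the norm $\|\cdot\|_M$ induced by the symmetric positive definite matrix $M$. For the base case, $z_1=z_0$ gives $w_1=\eta u_1$, so $\|u_1\|_M=\|w_1\|_M/\eta\le W/\eta$. For the inductive step, suppose $\|u_{k-1}\|_M\le W/\eta$. The convex-combination identity then gives
\[
\|u_k\|_M\le \frac{a_k}{\eta+a_k}\frac{W}{\eta}+\frac{\eta}{\eta+a_k}\frac{W}{\eta}=\frac{W}{\eta}.
\]
For $k\ge2$ we have $u_{k-1}$ with $k-1\ge1$, so the induction hypothesis is available. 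The index $k-1=0$ only appears inside $w_1$, where it is handled by $z_1=z_0$.

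\textbf{Step 2 (bound on $a_k\|z_k-z_{k-1}\|_M$).} Write $a_k(z_k-z_{k-1})=w_k-\eta(z_k-z^*)$. The triangle inequality together with Step 1 then yields
\[
a_k\|z_k-z_{k-1}\|_M\le W+\eta\cdot\frac{W}{\eta}=2W.
\]

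I do not expect a serious obstacle. The only point needing care is the base case of the induction, which is exactly where the hypothesis $z_1=z_0$ enters. Nonnegativity of $a_k$ is essential, since it is what makes the weights in the recursion a genuine convex combination.
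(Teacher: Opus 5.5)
Your proposal is correct and follows the paper's proof essentially step for step. The paper likewise writes $z_k-z^*$ as the convex combination $\theta_k(z_{k-1}-z^*)+(1-\theta_k)w_k/\eta$ with $\theta_k=a_k/(\eta+a_k)$, uses $z_1=z_0$ for the base case, inducts, and then obtains the second bound from $a_k(z_k-z_{k-1})=w_k-\eta(z_k-z^*)$ and the triangle inequality.
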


\begin{proof}
Set $y_k=z_k-z^*$. Then
\[
    w_k=\eta y_k+a_k(y_k-y_{k-1})
    =
    (\eta+a_k)y_k-a_k y_{k-1}.
\]
Let $\theta_k=\frac{a_k}{\eta+a_k}$. It follows that
\[
    y_k
    =
    \theta_k y_{k-1}
    +
    (1-\theta_k)\frac{w_k}{\eta}.
\]
Since $M$ is symmetric positive definite and
$W=\sup_{k\ge1}\|w_k\|_M<+\infty$, we obtain
\begin{equation}\label{eq_ykM}
     \|y_k\|_M
    \le
    \theta_k\|y_{k-1}\|_M
    +
    (1-\theta_k)\frac{W}{\eta}.
\end{equation}
Since $z_1=z_0$, we have $w_1=\eta(z_1-z^*)=\eta y_1$, and hence
$
    \|y_1\|_M\le \frac{W}{\eta}.
$
Therefore, by induction and \eqref{eq_ykM},
\begin{equation}\label{eq_zbound}
    \|z_k-z^*\|_M=\|y_k\|_M\le \frac{W}{\eta},
    \qquad \forall k\ge1.
\end{equation}

Next, from the definition of $w_k$,
\[
    a_k(z_k-z_{k-1})
    =
    w_k-\eta(z_k-z^*).
\]
Using \eqref{eq_zbound} and the definition of $W$, we get
\[
a_k\|z_k-z_{k-1}\|_M
\le
\|w_k\|_M+\eta\|z_k-z^*\|_M
\le 2W.
\]
This proves the second estimate and completes the proof.
\end{proof}

The following elementary sequence estimate will be used to control the scaled
feasibility residual.

\begin{lemma}\cite[Lemma 6]{HeAutomatica}\label{le_heauto}
Let $\{g_k\}_{k\geq 1}$ be a sequence of vectors in $\mathbb{R}^{n}$ and let
$\{a_k\}_{k\geq 1}\subset[0,1)$. Assume that there exists $C>0$ such that
\[
    \left\|g_{k+1}+\sum_{j=1}^ka_j g_j\right\|\leq C,
    \qquad \forall k\geq 1.
\]
Then
\[
    \sup_{k\geq 1}\|g_k\|\leq \|g_1\|+2C.
\]
\end{lemma}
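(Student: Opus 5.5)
The statement is Lemma \ref{le_heauto}: if $\|g_{k+1}+\sum_{j=1}^k a_jg_j\|\le C$ for all $k\ge1$ with $a_j\in[0,1)$, then $\sup_k\|g_k\|\le\|g_1\|+2C$. I would prove it by strong induction on $k$, aiming for the bound $\|g_k\|\le \|g_1\|+2C$ for every $k\ge1$. The key object is the partial sum
\[
S_k:=g_1+\sum_{j=1}^{k} a_j g_j ,
\]
which splits $g_{k+1}$ as a bounded perturbation of a weighted history.

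\textbf{Step 1 (rewrite the hypothesis).} Set $u_{k+1}:=g_{k+1}+\sum_{j=1}^k a_jg_j$, so $\|u_{k+1}\|\le C$, and set $u_1:=g_1$. Then $g_{k+1}=u_{k+1}-\sum_{j=1}^k a_jg_j$. Subtracting consecutive relations gives
\[
g_{k+1}-g_k=u_{k+1}-u_k-a_kg_k ,
\]
that is,
\[
g_{k+1}=(1-a_k)g_k+u_{k+1}-u_k \quad (k\ge2),
\]
and $g_2=u_2-a_1g_1$.

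\textbf{Step 2 (a convex-combination recursion).} A naive bound from Step 1 loses $2C$ at every step, so I would avoid it. Instead define $v_k:=g_k-u_k$; then $v_1=0$. The recursion becomes
\[
v_{k+1}=(1-a_k)g_k-u_k=(1-a_k)v_k-a_ku_k .
\]
Since $a_k\in[0,1)$, $v_{k+1}$ is a convex combination of $v_k$ and $-u_k$, so
\[
\|v_{k+1}\|\le \max\{\|v_k\|,\|u_k\|\}.
\]
For $k\ge2$ we have $\|u_k\|\le C$, while $u_1=g_1$. At the first step, $v_2=-a_1g_1$, so $\|v_2\|\le\|g_1\|$.

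\textbf{Step 3 (conclude).} By induction, $\|v_k\|\le\max\{\|g_1\|,C\}$ for all $k$. Hence
\[
\|g_k\|\le\|v_k\|+\|u_k\|\le \max\{\|g_1\|,C\}+C\le \|g_1\|+2C
\]
for $k\ge2$, and the bound is trivial for $k=1$.

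The only delicate point is the telescoping in Step 2: a naive triangle-inequality induction grows linearly in $k$, and introducing $v_k=g_k-u_k$ to obtain a convex combination is what removes that growth. The indices at $k=1$ also need care, because $u_1$ is not covered by the constant $C$.
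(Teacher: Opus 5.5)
Your proof is correct, and it even gives the slightly sharper bound $\max\{\|g_1\|,C\}+C$. The paper only cites this lemma without proof, but your auxiliary sequence is exactly $v_{k+1}=-\sum_{j=1}^{k}a_jg_j$, so the recursion $v_{k+1}=(1-a_k)v_k-a_ku_k$ is the same partial-sum convex-combination device the paper uses in the proof of Lemma~\ref{lem_F_res}, namely $S_{k+1}=(1-a_{k+1})S_k+a_{k+1}F_{k+1}$. The only blemish is the sum $S_k=g_1+\sum_{j=1}^k a_jg_j$ announced at the start: it is never used, its extra $g_1$ does not match $g_{k+1}=u_{k+1}-\sum_{j=1}^k a_jg_j$, and it can simply be dropped.
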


We shall also use the following standard deterministic convergence lemma.

\begin{lemma}\cite[Proposition~A.30]{Bertsekas} \label{le_bertsekas}
Let $\{\omega_k\}_{k\ge1}$, $\{\sigma_k\}_{k\ge1}$, and $\{q_k\}_{k\ge1}$
be nonnegative sequences satisfying
\[
    \omega_{k+1}\le (1-\sigma_k)\omega_k+\sigma_k q_k,
    \qquad 0< \sigma_k\le 1,
    \qquad \sum_{k=1}^{+\infty}\sigma_k=+\infty.
\]
If $q_k\to0$ as $k\to+\infty$, then
$
    \omega_k\to0.
$
\end{lemma}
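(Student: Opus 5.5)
The plan is a direct $\varepsilon$-argument: subtract a small constant from $\omega_k$ so that the forcing term disappears, then let the product $\prod(1-\sigma_j)$, which tends to zero because $\sum\sigma_k=+\infty$, kill what remains.

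\textbf{Step 1 (reduction).} Fix $\varepsilon>0$. Since $q_k\to0$, choose $K\ge1$ with $q_k\le\varepsilon$ for all $k\ge K$. For such $k$ the hypothesis gives
\[
    \omega_{k+1}\le(1-\sigma_k)\omega_k+\sigma_k\varepsilon .
\]
Writing $\sigma_k\varepsilon=\varepsilon-(1-\sigma_k)\varepsilon$, this is equivalent to
\[
    \omega_{k+1}-\varepsilon\le(1-\sigma_k)(\omega_k-\varepsilon).
\]

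\textbf{Step 2 (positive parts and iteration).} Because $1-\sigma_k\in[0,1)$, the map $s\mapsto(1-\sigma_k)s$ is nondecreasing and preserves signs. Taking positive parts therefore gives $(\omega_{k+1}-\varepsilon)^+\le(1-\sigma_k)(\omega_k-\varepsilon)^+$. Iterating from $K$ to $k$, and using $1-s\le e^{-s}$ together with $\omega_K\ge0$, we get
\[
    (\omega_{k+1}-\varepsilon)^+\le\Big(\prod_{j=K}^{k}(1-\sigma_j)\Big)(\omega_K-\varepsilon)^+\le \exp\Big(-\sum_{j=K}^{k}\sigma_j\Big)\,\omega_K .
\]

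\textbf{Step 3 (conclusion).} Since $\sum_{j\ge K}\sigma_j=+\infty$, the right-hand side tends to $0$ as $k\to+\infty$. Hence $\limsup_{k\to+\infty}\omega_k\le\varepsilon$. As $\omega_k\ge0$ and $\varepsilon>0$ was arbitrary, it follows that $\omega_k\to0$.

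The only delicate point is Step 2. One must pass to positive parts rather than iterate the signed inequality, because $\omega_k-\varepsilon$ may change sign. One must also use that $\sigma_k\le1$, so that each factor $1-\sigma_k$ is nonnegative and the inequality direction is preserved. Everything else is routine.
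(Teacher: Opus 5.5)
Your proof is correct. The shift $\omega_{k+1}-\varepsilon\le(1-\sigma_k)(\omega_k-\varepsilon)$ follows from $q_k\le\varepsilon$ and $\sigma_k\ge0$. Passing to positive parts is legitimate because $1-\sigma_k\ge0$. The estimate $\prod_{j=K}^{k}(1-\sigma_j)\,(\omega_K-\varepsilon)^+\le \exp\bigl(-\sum_{j=K}^{k}\sigma_j\bigr)\,\omega_K\to0$ together with $\omega_k\ge0$ then gives $\omega_k\to0$.

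The paper does not prove this lemma; it only cites Bertsekas, Proposition A.30. Your argument therefore serves as a self-contained replacement. A common alternative is to unroll the recursion into
$\omega_{k+1}\le\prod_{j=K}^{k}(1-\sigma_j)\,\omega_K+\sum_{j=K}^{k}\sigma_j\prod_{i=j+1}^{k}(1-\sigma_i)\,q_j$
and note that the weights in the sum add up to $1-\prod_{j=K}^{k}(1-\sigma_j)\le1$. Your $\varepsilon$-shift reaches the same conclusion without that bookkeeping.
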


The next averaging lemma is used to prove the little-o estimate of the
feasibility violation.

\begin{lemma}\label{lem_F_res}
Let $\{r_k\}_{k\ge1}$ be a sequence in $\mathbb R^N$, and let
$\{a_k\}_{k\ge1}\subset(0,1)$ satisfy
$
    \sum_{k=1}^{+\infty}a_k=+\infty.
$
Assume that
\[
    F_{k+1}:=r_{k+1}+\sum_{j=1}^{k}a_j r_j
\]
has a finite limit in $\mathbb R^N$ as $k\to+\infty$. Then
\[
    \lim_{k\to+\infty}\|r_k\| = 0.
\]
\end{lemma}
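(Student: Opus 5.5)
The plan is to turn the hypothesis into a convex-combination recursion for the partial sums. Then Lemma \ref{le_bertsekas} shows that the partial sums converge to the same limit as $F_k$, and subtracting gives $r_k\to0$.

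Set $S_k:=\sum_{j=1}^{k}a_jr_j$ for $k\ge1$, so that $F_{k+1}=r_{k+1}+S_k$. Let $F_\infty:=\lim_{k\to+\infty}F_k\in\mathbb R^N$. The first step is the identity
\[
    S_{k+1}=S_k+a_{k+1}r_{k+1}=S_k+a_{k+1}(F_{k+1}-S_k)=(1-a_{k+1})S_k+a_{k+1}F_{k+1}.
\]
Subtracting $F_\infty=(1-a_{k+1})F_\infty+a_{k+1}F_\infty$ from both sides gives, with $u_k:=S_k-F_\infty$,
\[
    u_{k+1}=(1-a_{k+1})u_k+a_{k+1}\bigl(F_{k+1}-F_\infty\bigr).
\]
Taking norms and using $a_{k+1}\in(0,1)$ yields
\[
    \|u_{k+1}\|\le(1-a_{k+1})\|u_k\|+a_{k+1}\|F_{k+1}-F_\infty\|.
\]

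The second step applies Lemma \ref{le_bertsekas} to this inequality with $\omega_k:=\|u_k\|$, $\sigma_k:=a_{k+1}$ and $q_k:=\|F_{k+1}-F_\infty\|$. Its hypotheses hold. First, $0<\sigma_k<1$ because $a_k\in(0,1)$. Second, $\sum_k\sigma_k=\sum_{k\ge2}a_k=+\infty$, since removing the single term $a_1$ does not affect divergence. Third, $q_k\to0$ by the assumption on $F_k$. Hence $\|u_k\|\to0$, that is, $S_k\to F_\infty$.

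Finally, $r_{k+1}=F_{k+1}-S_k\to F_\infty-F_\infty=0$, so $\lim_{k\to+\infty}\|r_k\|=0$. None of the steps is a real obstacle. The only points to check are the index shift in $\sigma_k=a_{k+1}$ and the strict bound $a_k<1$, which guarantees $\sigma_k\le1$ as Lemma \ref{le_bertsekas} requires.
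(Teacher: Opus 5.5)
Your proposal is correct and follows the paper's proof essentially verbatim. It uses the same partial sums $S_k$, the same convex-combination recursion $S_{k+1}=(1-a_{k+1})S_k+a_{k+1}F_{k+1}$, Lemma \ref{le_bertsekas} applied to $\|S_k-F_\infty\|$, and the conclusion $r_{k+1}=F_{k+1}-S_k\to0$; your explicit handling of the index shift $\sigma_k=a_{k+1}$ is a small clarification the paper leaves implicit.
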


\begin{proof}
Set $S_k:=\sum_{j=1}^{k}a_j r_j$. Then $F_{k+1}=r_{k+1}+S_k$ and
\[
    S_{k+1}
    =
    S_k+a_{k+1}r_{k+1}
    =
    (1-a_{k+1})S_k+a_{k+1}F_{k+1}.
\]
Let $F_k\to F_\infty\in\mathbb R^N$. Then
\[
 \|S_{k+1}-F_\infty\|
 \le
 (1-a_{k+1})\|S_k-F_\infty\|
 +
 a_{k+1}\|F_{k+1}-F_\infty\|.
\]
Applying Lemma \ref{le_bertsekas} with
$\omega_k=\|S_k-F_\infty\|$ yields
\[
    S_k\to F_\infty.
\]
Consequently,
\[
    \lim_{k\to+\infty}\|r_{k+1}\|
    =
    \lim_{k\to+\infty}\|F_{k+1}-S_k\|
    =0.
\]
\end{proof}

The last lemma  ensures the divergence of $\sum_k1/t_k$, which is
needed in the averaging arguments.

\begin{lemma}\label{lem_scaledtk}
Let $\{t_k\}_{k\ge1}$ and $\{\xi_k\}_{k\ge1}$ be two positive sequences in
$[1,+\infty)$. Assume that $t_k^2\xi_k$ is nondecreasing,
$t_k^2\xi_k\to+\infty$, and
\[
    t_{k+1}^2\xi_{k+1}-t_k^2\xi_k
    \le
    \rho t_{k+1}\xi_{k+1},
    \qquad
    0<\rho\le1.
\]
Then
$
    \sum_{k=1}^{+\infty}\frac1{t_k}=+\infty.
$
\end{lemma}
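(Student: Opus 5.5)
The plan is to turn the growth condition into an upper bound on the increments of $t_k$, and then get divergence of $\sum 1/t_k$ by comparing the sum with a logarithm. Set $s_k:=t_k^2\xi_k$, which by assumption is nondecreasing, tends to $+\infty$, and satisfies $s_{k+1}-s_k\le\rho\,t_{k+1}\xi_{k+1}$. Dividing by $s_{k+1}=t_{k+1}^2\xi_{k+1}$ gives
\[
    \frac{s_{k+1}-s_k}{s_{k+1}}\le \frac{\rho}{t_{k+1}}.
\]
Summing over $k$ reduces the lemma to showing that $\sum_k (s_{k+1}-s_k)/s_{k+1}=+\infty$ whenever $s_k\uparrow+\infty$. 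This is the classical Abel--Dini fact.

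To prove it, I would use the elementary inequality $1-x\le-\ln x$ for $x\in(0,1]$, applied with $x=s_k/s_{k+1}$. This only gives $\frac{s_{k+1}-s_k}{s_{k+1}}\le\ln\frac{s_{k+1}}{s_k}$, which is an upper bound and points the wrong way. So I would instead use a Cauchy block argument.

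\textbf{Block argument.} For any $N$, choose $K>N$ with $s_K\ge 2s_N$, which is possible since $s_k\to\infty$. Then
\[
    \sum_{k=N}^{K-1}\frac{s_{k+1}-s_k}{s_{k+1}}\ge \frac{s_K-s_N}{s_K}\ge\frac12,
\]
because $s_{k+1}\le s_K$ for every term in the block. Repeating this over infinitely many disjoint blocks shows the tail sums never become small, so the series diverges. Combined with the displayed inequality, this gives $\rho\sum_k 1/t_{k+1}=+\infty$, and hence $\sum_k 1/t_k=+\infty$ since $\rho>0$.

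The only points requiring care are the division step (which uses $s_{k+1}>0$, guaranteed by the positivity of $t_k$ and $\xi_k$) and the block argument, which is the one step that is more than a rearrangement. Notably, the hypothesis $t_k,\xi_k\ge1$ is not needed beyond positivity.
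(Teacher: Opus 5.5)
Your proof is correct, but it takes a different route from the paper's. Both arguments start from the same rearrangement of the growth condition, $s_{k+1}(1-\rho/t_{k+1})\le s_k$ with $s_k=t_k^2\xi_k$.

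The paper argues by contradiction. If $\sum_k 1/t_k<+\infty$, then $\rho/t_{k+1}\le\frac12$ for $k\ge k_0$. The factor $1-\rho/t_{k+1}$ can then be inverted and bounded via $(1-s)^{-1}\le e^{2s}$ on $[0,\frac12]$, giving $s_{N+1}\le s_{k_0}\exp\bigl(2\rho\sum_{k=k_0}^{N}1/t_{k+1}\bigr)$, which contradicts $s_k\to+\infty$. So the paper does use a logarithmic comparison. It applies the log bound to $1-\rho/t_{k+1}$ rather than to $s_k/s_{k+1}$, which yields the useful direction $\ln(s_{k+1}/s_k)\le 2\rho/t_{k+1}$. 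This is also why it needs the smallness of $\rho/t_{k+1}$ supplied by the contradiction hypothesis, and it is exactly the direction issue you ran into.

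Your argument instead divides by $s_{k+1}$ and uses the Abel--Dini divergence of $\sum_k (s_{k+1}-s_k)/s_{k+1}$ via Cauchy blocks. This is direct: it needs no contradiction, no logarithms and no case split. It also gives an explicit quantitative statement: each block over which $s_k$ doubles contributes at least $1/(2\rho)$ to $\sum_k 1/t_{k+1}$.

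The price is that your block estimate relies essentially on the monotonicity of $s_k$, through the nonnegative increments and $s_{k+1}\le s_K$ within a block. The paper's iteration needs only positivity and $s_k\to+\infty$. Since monotonicity is among the hypotheses, this costs nothing here. Your remark that $t_k,\xi_k\ge1$ is superfluous applies equally to the paper's proof.
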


\begin{proof}
Suppose, by contradiction, that
$
    \sum_{k=1}^{+\infty}\frac1{t_k}<+\infty.
$
Then $1/t_k\to0$. Hence there exists $k_0\ge1$ such that
\begin{equation}\label{eq_tk12}
      \frac{\rho}{t_{k+1}}\le \frac12,
    \qquad \forall k\ge k_0.
\end{equation}
From the scaled growth condition,
\[
    t_{k+1}^2\xi_{k+1}
    -
    t_k^2\xi_k
    \le
    \frac{\rho}{t_{k+1}}t_{k+1}^2\xi_{k+1}.
\]
Thus, for all $k\ge k_0$,
\[
    t_{k+1}^2\xi_{k+1}
    \le
    \left(1-\frac{\rho}{t_{k+1}}\right)^{-1}
    t_k^2\xi_k.
\]
Iterating from $k_0$ to $N$ gives
\begin{equation}\label{eq_app00}
      t_{N+1}^2\xi_{N+1}
    \le
    t_{k_0}^2\xi_{k_0}
    \prod_{k=k_0}^{N}
    \left(1-\frac{\rho}{t_{k+1}}\right)^{-1}.
\end{equation}

We now estimate the product. For $0\le s\le 1/2$, define
$\phi(s):=2s+\ln(1-s)$. Then
\[
    \phi'(s)=2-\frac1{1-s}
    =
    \frac{1-2s}{1-s}\ge0.
\]
Since $\phi(0)=0$, we have $\phi(s)\ge0$, and hence
$
    -\ln(1-s)\le 2s.
$
Therefore,
\[
    (1-s)^{-1}\le e^{2s},
    \qquad 0\le s\le\frac12.
\]
Using \eqref{eq_tk12}, we get
\[
    \left(1-\frac{\rho}{t_{k+1}}\right)^{-1}
    \le
    \exp\left(\frac{2\rho}{t_{k+1}}\right),
    \qquad k\ge k_0.
\]
Combining this estimate with \eqref{eq_app00}, we obtain
\[
    t_{N+1}^2\xi_{N+1}
    \le
    t_{k_0}^2\xi_{k_0}
    \exp\left(
    2\rho\sum_{k=k_0}^{N}\frac1{t_{k+1}}
    \right).
\]
Since
$
    \sum_{k=1}^{+\infty}\frac1{t_k}<+\infty,
$
the right-hand side is uniformly bounded in $N$. This contradicts
$t_k^2\xi_k\to+\infty$. Therefore,
\[
    \sum_{k=1}^{+\infty}\frac1{t_k}=+\infty.
\]
\end{proof}

\end{document}